\numberwithin{equation}{section}
\newtheorem{theorem}{Theorem}[section]
\newtheorem{proposition}[theorem]{Proposition}
\theoremstyle{definition}
\newtheorem{example}[theorem]{Example}
\newtheorem{definition}[theorem]{Definition}
\newtheorem{remark}[theorem]{Remark}
\DeclareMathOperator{\arctanh}{arctanh}
\author{}
\title{Singularities of focal sets of pseudo-spherical framed immersions in the three-dimensional anti-de Sitter space}
\author{\\\
        {\bf O. O\u{g}ulcan Tuncer}\thanks{E-mail address:
        otuncer@hacettepe.edu.tr}
 \medskip\\
   \\Department of Mathematics, Hacettepe University,
                    \\ 06800 Beytepe, Ankara, Turkey
}
\begin{document}

\maketitle
\begin{abstract}
We introduce pseudo-spherical non-null framed curves in the three-dimensional anti-de Sitter spacetime and establish the existence and uniqueness of these curves. We then give moving frames along pseudo-spherical framed curves, which are well-defined even at singular points of the curve. These moving frames enable us to define evolutes and focal surfaces of pseudo-spherical framed immersions. We investigate the singularity properties of these evolutes and focal surfaces. We then reveal that the evolute of a pseudo-spherical framed immersion is the set of singular points of its focal surface. We also interpret evolutes and focal surfaces as the discriminant and the secondary discriminant sets of certain height functions, which allows us to explain evolutes and focal surfaces as wavefronts from the viewpoint of Legendrian singularity theory. Examples are provided to flesh out our results, and we use the hyperbolic Hopf map to visualize these examples. \medskip\\
\textit{Keywords:} evolute; focal surface; framed curve; anti-de Sitter space; singularity \medskip\\
\textit{Mathematics Subject Classification:} 53A35; 57R45; 58K05
\end{abstract}
\section{Introduction}
The main tool for investigating the local differential geometry of regular curves is the well-known Frenet frame. This frame is well-defined at any point of the regular curve. However we cannot use the Frenet frame to investigate the local differential geometry of general curves that may have singularities. Therefore having a well-defined moving frame along these curves becomes crucial. The good news is several authors have defined such moving frames for certain families of curves having singularities in different ambient spaces. This journey has begun with the work of Fukunaga and Takahashi \cite{fukunaga}, where they define Legendre curves in the unit tangent bundle of the Euclidean plane and introduce a moving frame called the Legendrian Frenet frame along the Legendre curve. Then the same authors define evolutes and involutes of Legendre curves and Legendre immersions  \cite{fukunaga2,fukunaga3,fukunaga4}. Pedal and contrapedal curves of these Legendre curves have been investigated \cite{li2,tuncer}. These Legendre curves and immersions have also been generalized to different spaces and evolutes, involutes, parallels, pedals, and contrapedals of these curves have been investigated in the Minkowski plane \cite{Sekerci,lisun}, in the Euclidean 2-sphere \cite{li,yu}, in the hyperbolic and de Sitter 2-spaces \cite{chen,li-tuncer}, and in normed planes \cite{vitor}. The local differential geometry of singular curves in higher dimensional spaces has also been studied. Such curves are called the framed curves \cite{honda}. Evolutes and focal surfaces of framed curves in Euclidean 3-space are investigated in \cite{honda2}. Similar problems have also been briefly discussed for the three-sphere \cite{honda3}. See \cite{honda4, li3, lipei, li4, li7, PTY, song,tuncer2,zhang,wang-lang} for other related papers.


This paper serves two purposes: to extend the regular curves in the anti-de Sitter 3-space to pseudo-spherical framed curves that may have singularities and to investigate evolutes and focal surfaces of these framed curves in terms of singularity theory. The first goal is not hard to achieve; we make use of the Legendrian dualities for pseudo-spheres in semi-Euclidean space with index 2 \cite{CI}. To achieve the second goal, our main tool is a moving frame along the pseudo-spherical framed curve that is well-defined at even singular points. We also use certain height functions to explain evolutes and focal surfaces as wavefronts from the viewpoint of Legendrian singularity theory. 

This paper is organized as follows. In Section 2 we begin with a brief review of the semi-Euclidean 4-space with index 2 and the local differential geometry of regular curves in the anti-de Sitter 3-space. In Section 3 we introduce pseudo-spherical spacelike and timelike framed curves in the anti-de Sitter 3-space. We give the existence and uniqueness theorems for these curves. We also define three types of moving frames along these curves that are isometric under rotations. We then define parallel curves of pseudo-spherical framed curves and show that these parallel curves are also pseudo-spherical framed curves in the anti-de Sitter 3-space. In Section 4 we introduce evolutes and focal surfaces of pseudo-spherical spacelike framed immersions. We show that evolutes are also pseudo-spherical framed immersions. We then obtain the evolute of a pseudo-spherical spacelike framed curve as the set of singular values of the focal surface of the same curve. We also define certain height functions for pseudo-spherical spacelike framed immersions and deduce that the discriminant and secondary discriminant sets of these height functions coincide with the evolute and the focal surface of this immersion. We finally give an example of pseudo-spherical spacelike framed immersion and visualize the projections on the hyperbolic 2-space of this immersion and its evolute by using the hyperbolic Hopf map. In Section 5 we obtain similar results to those in Section 4 for pseudo-spherical timelike framed immersions in the anti-de Sitter 3-space.

\section{Preliminaries}
The semi-Euclidean 4-space with index $2$ denoted by $\mathbb{R}^4_2$ is the real vector space with a pseudo-scalar product given by 
\begin{equation*}
	\langle u,w\rangle=-u_1w_1-u_2w_2+u_3w_3+u_4w_4, 
\end{equation*}
where $u=(u_1,u_2,u_3,u_4),\, w=(w_1,w_2,w_3,w_4)\in\mathbb{R}^4$. \\ 
\indent Vectors in $\mathbb{R}^4_2$ are classified depending on this pseudo-scalar product. Consider a non-zero vector $u=(u_1,u_2,u_3,u_4)\in\mathbb{R}^4_2$. The vector $u$ is called spacelike, timelike, or lightlike (null) if $\langle u,u\rangle>0$, $\langle u,u\rangle<0$ or $\langle u,u\rangle=0$, respectively. The pseudo-norm of the vector $u$ is given by $\|u\|=\sqrt{\lvert\langle u,u\rangle\rvert}$. For three arbitrary vectors $u=(u_1,u_2,u_3,u_4)$, $v=(v_1,v_2,v_3,v_4)$, and $w=(w_1,w_2,w_3,w_4)$, the triple vector product is defined by
\begin{equation*}
	u\times v\times w = 
	\begin{vmatrix}
		-e_1& -e_2 & e_3 & e_4\\
		u_1& u_2 & u_3 &u_4\\
        v_1 & v_2 & v_3 & v_4\\
		w_1& w_2 & w_3 & w_4
	\end{vmatrix} 
\end{equation*}
where the set $\{e_1,e_2,e_3,e_4\}$ is the canonical basis of $\mathbb{R}^4_2$.\\ \indent
In the semi-Euclidean 4-space with index 2, curves are classified depending on their tangent vectors. A curve is said to be spacelike, timelike, or lightlike (null) if the tangent vector of the curve is spacelike, timelike, or lightlike (null), respectively. 

There are three types of pseudo-spheres in the semi-Euclidean 4-space with index 2. The anti-de Sitter 3-space, pseudo $3$-sphere with index 2, and nullcone at the origin
are respectively defined by
\begin{align*}
	&AdS^3=\{\mathbf{u}\in\mathbb{R}^4_2\,\vert\,\langle \mathbf{u},\mathbf{u}\rangle=-1 \},\\
	&S^3_2=\{\mathbf{u}\in\mathbb{R}^4_2\,\vert\,\langle \mathbf{u},\mathbf{u}\rangle=1 \},\\
	&\Lambda^3=\{\mathbf{u}\in\mathbb{R}^4_2\backslash\{\mathbf{0}\} \,\vert\,\langle \mathbf{u},\mathbf{u}\rangle=0 \}.
\end{align*} 
We will make use of the hyperbolic Hopf map $\mathbf{h}$ defined by \cite{Benyounes}
\begin{align}
    \mathbf{h}:&AdS^3 \to H^2(1/2) \nonumber \\
    &(u_1,u_2,u_3,u_4)\mapsto \left(u_1u_3+u_2u_4,u_1u_4-u_2u_3,\dfrac{u_1^2+u_2^2+u_3^2+u_4^2}{2}\right), \label{hopfmap}
\end{align}
where $H^2(1/2)=\{(y_1,y_2,y_3)\in\mathbb{R}^3_1\:|\: y_1^2+y_2^2-y_3^2=-1/4\,\text{and}\,y_3>0 \}$ is the hyperbolic 2-space defined as a surface of constant curvature $-1/4$ in Minkowski space $\mathbb{R}^3_1$.
This map will allow us to get projections on $H^2(1/2)$ of curves in $AdS^3$ and to visualize them.  

We now discuss the local differential geometry of regular curves in the anti-de Sitter 3-space. Let us begin with spacelike curves in $AdS^3$. Let $\gamma:I\to AdS^3$  be a regular spacelike curve. $\gamma'=d\gamma/dt$ is a spacelike vector and $\|\gamma'(t)\|\neq 0$ for all $t\in I$. Since $\gamma$ is a spacelike regular curve, it admits an arc-length parametrization $s=s(t)$. So we may assume that $\gamma(s)$ is a unit-speed curve. Let $T(s)=\gamma'(s)$ be the unit tangent vector. Since $\langle \gamma(s),\gamma(s) \rangle=-1$, we have $\langle \gamma(s), T(s)\rangle=0$. From a direct calculation, we find that $\langle \gamma(s), T'(s)\rangle =-1$. We now take $N_1(s)=T'(s)-\gamma(s)$ and $N_2(s)=\gamma(s)\times T(s)\times N_1(s)$. It is easy to check that $N_1$ and $N_2$ are normal vectors of the spacelike curve $\gamma$ in $AdS^3$. These normal vectors can be spacelike or timelike vectors. We also define the curvature by $\kappa_g(s)=\|T'(s)-\gamma(s)\|$. So we say that the spacelike curve $\gamma$ is a geodesic in $AdS^3$ if $\kappa_g(s)=0$ and $N_1(s)=0$. In the case of $\kappa_g(s)\neq0$, we are able to define the following unit vectors.
\[ n_1(s)=\dfrac{T'(s)-\gamma(s)}{\|T'(s)-\gamma(s)\|}=\dfrac{N_1(s)}{\|N_1(s)\|},\qquad n_2(s)=\gamma(s)\times T(s)\times n_1(s). \]
Hence the set $\{ \gamma(s), T(s), n_1(s), n_2(s)\}$ forms a pseudo-orthonormal frame along the spacelike curve $\gamma$. Then the Frenet-Serret type formulas are governed by
\begin{equation*}
	\begin{pmatrix}
		\gamma'(s)\\
		T'(s)\\
		n_1'(s)\\
        n_2'(s)
	\end{pmatrix}= \begin{pmatrix}
		0 & 1 & 0 & 0 
        \\ 1 & 0 & \kappa_g(s) & 0
        \\ 0 & -\delta \kappa_g(s) & 0 & \tau_g(s) 
        \\ 0 & 0 &  \tau_g(s) & 0
        
	\end{pmatrix}\begin{pmatrix}
		\gamma(s)\\
		T(s)\\
		n_1(s)\\
        n_2(s)
	\end{pmatrix},
\end{equation*}
where $\delta=\langle n_1(s), n_1(s)\rangle$ and $\tau_g(s)=\frac{\delta}{\kappa_g^2(s)}\det(\gamma(s),\gamma'(s),\gamma''(s),\gamma'''(s))$.

We now describe the local differential geometry of regular timelike curves in $AdS^3$. Let $\gamma(s):I\to AdS^3$ be a regular unit-speed timelike curve. Then $T(s)=\gamma'(s)$ is the unit tangent vector to $\gamma$. It is easy to see that $\langle \gamma(s), T(s)\rangle=0$ and then $\langle \gamma(s), T(s)\rangle=1$. The vector $N_1(s)=T'(s)+\gamma(s)$ is pseudo-orthogonal to both $\gamma(s)$ and $T(s)$. Let $N_2(s)=\gamma(s)\times T(s)\times N_1(s)$. It is easy to check that $N_1$ and $N_2$ are spacelike normal vectors of the timelike curve $\gamma$ in $AdS^3$. Define $\kappa_g(s)=\|T'(s)+\gamma(s)\|$. Then in the case of $\kappa_g(s)\neq 0$, we define the unit spacelike vectors
\[ n_1(s)=\dfrac{T'(s)+\gamma(s)}{\|T'(s)+\gamma(s)\|}=\dfrac{N_1(s)}{\|N_1(s)\|},\qquad n_2(s)=\gamma(s)\times T(s)\times n_1(s). \]
The set $\{ \gamma(s), T(s), n_1(s), n_2(s)\}$ forms a pseudo-orthonormal frame along the timelike curve $\gamma$. Then the Frenet-Serret type formulas are given by
\begin{equation*}
	\begin{pmatrix}
		\gamma'(s)\\
		T'(s)\\
		n_1'(s)\\
        n_2'(s)
	\end{pmatrix}= \begin{pmatrix}
		0 & 1 & 0 & 0 
        \\ -1 & 0 & \kappa_g(s) & 0
        \\ 0 &  \kappa_g(s) & 0 & \tau_g(s) 
        \\ 0 & 0 &  -\tau_g(s) & 0
        
	\end{pmatrix}\begin{pmatrix}
		\gamma(s)\\
		T(s)\\
		n_1(s)\\
        n_2(s)
	\end{pmatrix},
\end{equation*}
where $\tau_g(s)=-\frac{1}{\kappa_g^2(s)}\det(\gamma(s),\gamma'(s),\gamma''(s),\gamma'''(s))$.

\section{Pseudo-spherical framed curves in the anti-de Sitter 3-space}
In this section, we consider the local differential geometry of smooth curves in the three-dimensional anti-de Sitter space. If the curve has singular points, we cannot define the pseudo-orthonormal Frenet-type frame at these
singular points given in the previous section. We also cannot use the Frenet–Serret type formulas to study the properties of the original
curve. In order to overcome this obstacle, we shall take advantage of the way developed by \cite{fukunaga, honda2}. So we shall introduce pseudo-spherical framed curves in the three-dimensional anti-de Sitter space. Similar to the regular case, we have two types of these framed curves.
\subsection{Pseudo-spherical spacelike framed curves in $AdS^3$}
Let $\gamma_s:I\to AdS^3$ be a smooth curve. Then $(\gamma_s, v_1,v_2):I\to AdS^3\times \Delta_1$ is called a \textit{pseudo-spherical spacelike framed curve} if $(\gamma_s(s),v_1(s))^*\theta=0$ and $(\gamma_s(s),v_2(s))^*\theta=0$ for all $s\in I$, where
\begin{equation*}
	\Delta_1=\{(\mathbf{u},\mathbf{w})\,\vert\,\langle\mathbf{u},\mathbf{w}\rangle=0\}\subset AdS^3 \times {S}^3_2\,\,(\text{or}\,\,{S}^3_2\times AdS^3)
\end{equation*}
is a $4$-dimensional contact manifold, and $\theta$ is a canonical contact $1$-form on $\Delta_1$ \cite{CI}. The condition $(\gamma_s(s), v_i(s))^*\theta=0$ ($i=1,2$) is equivalent to $\langle \gamma_s'(s),v_i(s)\rangle=0$ ($i=1,2$) for all $s\in I$. If $(\gamma_s, v_1,v_2)$ is an immersion, then it is called a \textit{pseudo-spherical spacelike framed immersion}.

We call $\gamma_s:I\to AdS^3$ a \textit{pseudo-spherical spacelike framed base curve} if there exists a smooth map $(v_1,v_2):I\to\Delta_1$ for which $(\gamma_s, v_1,v_2)$ is a pseudo-spherical spacelike framed curve.

Define $\mu(s)=\gamma_s(s)\times v_1(s)\times v_2(s)$. Then the set $\{\gamma_s(s), v_1(s),v_2(s),\mu(s) \}$ is a pseudo-orthonormal frame along $\gamma_s$. This frame is well-defined even at singular points of $\gamma_s$. The Frenet-Serret type formulas for this frame are given by
\begin{equation}\label{SpacelikeSF}
	\begin{pmatrix}
		\gamma_s'(s)\\
		v_1'(s)\\
		v_2'(s)\\
        \mu'(s)
	\end{pmatrix}= \begin{pmatrix}
		0 & 0 & 0 & \alpha(s) 
        \\ 0 & 0 & \ell(s) & m(s)
        \\ 0 & \ell(s) & 0 & n(s)
        \\ \alpha(s) & -\epsilon m(s) &  \epsilon n(s) & 0
        
	\end{pmatrix}\begin{pmatrix}
	\gamma_s(s)\\
		v_1(s)\\
		v_2(s)\\
        \mu(s)
	\end{pmatrix},
\end{equation}
where $\epsilon=\langle v_1(s),v_1(s)\rangle$, $\alpha(s)=\langle \gamma_s'(s), \mu(s)\rangle$, $\ell(s)=-\epsilon\langle v_1'(s), v_2(s)\rangle$, $m(s)=\langle v_1'(s), \mu(s)\rangle$, and $n(s)=\langle v_2'(s), \mu(s)\rangle$. We call the mapping $(\alpha, \ell, m,n):I\to\mathbb{R}^4$ the \textit{curvature} of the pseudo-spherical spacelike framed curve $(\gamma_s,v_1,v_2)$. Notice that $s_0$ is a singular point of $\gamma_s$ if and only if $\alpha(s_0)=0$.
\begin{definition}
Let $(\gamma, v_1,v_2)$ and $(\Tilde{\gamma}, \Tilde{v}_1, \Tilde{v}_2)$ be two pseudo-spherical spacelike framed curves in $AdS^3$. We say that $(\gamma, v_1,v_2)$ and $(\Tilde{\gamma}, \Tilde{v}_1, \Tilde{v}_2)$ are congruent as pseudo-spherical framed curves if there exists a matrix $A\in SO(2,2)$ such that for all $s$
\[ \Tilde{\gamma}(s)=A(\gamma(s)),\quad \Tilde{v}_1(s)=A(v_1(s)),\quad \Tilde{v}_2(s)=A(v_2(s)). \]
\end{definition}
\begin{theorem}[Existence of pseudo-spherical spacelike framed curves]
For a smooth mapping $(\alpha,\ell,m,n):I\to\mathbb{R}^4$, there exists a pseudo-spherical spacelike framed curve $(\gamma_s,v_1,v_2)$ such that $\alpha$, $\ell$, $m$, and $n$ are the curvatures of $\gamma_s$.
\end{theorem}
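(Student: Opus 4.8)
The plan is to read the Frenet--Serret type formulas \eqref{SpacelikeSF} as a system of linear ordinary differential equations for the four vector-valued unknowns $\gamma_s,v_1,v_2,\mu:I\to\mathbb{R}^4_2$ and to produce the curve by integrating this system. Collecting the frame into $F=(\gamma_s,v_1,v_2,\mu)^{T}$ and letting $\Omega(s)$ denote the $4\times 4$ coefficient matrix in \eqref{SpacelikeSF} (whose entries are built from the prescribed smooth functions $\alpha,\ell,m,n$ together with a fixed sign $\epsilon\in\{-1,1\}$), the equations read $F'(s)=\Omega(s)F(s)$. Since the entries of $\Omega$ are smooth on $I$, the standard existence and uniqueness theorem for linear systems of ODEs yields, for any prescribed initial datum $F(s_0)$ at a chosen $s_0\in I$, a unique smooth solution $F$ on all of $I$. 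I would take as initial datum a positively oriented pseudo-orthonormal frame $\{\gamma_s(s_0),v_1(s_0),v_2(s_0),\mu(s_0)\}$ of $\mathbb{R}^4_2$ whose Gram matrix is $G=\mathrm{diag}(-1,\epsilon,-\epsilon,1)$ and with $\mu(s_0)=\gamma_s(s_0)\times v_1(s_0)\times v_2(s_0)$; here the entries $-1$ and $1$ of $G$ reflect $\gamma_s(s_0)\in AdS^3$ and the fact that $\mathbb{R}^4_2$ has signature $(-,-,+,+)$, while $\epsilon$ and $-\epsilon$ come from the definition of $\epsilon$ and from $(v_1(s_0),v_2(s_0))\in\Delta_1$.

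The crucial step is to show that the solution remains a pseudo-orthonormal frame for all $s$, not merely at $s_0$. For this I would form the Gram matrix $g(s)=\bigl(\langle e_i(s),e_j(s)\rangle\bigr)_{i,j}$ with $e_1=\gamma_s,\,e_2=v_1,\,e_3=v_2,\,e_4=\mu$, and differentiate using $F'=\Omega F$ to obtain the matrix ODE $g'=\Omega g+g\Omega^{T}$. The key algebraic fact is the identity $\Omega G+(\Omega G)^{T}=0$, i.e.\ that $\Omega G$ is skew-symmetric; because $G$ is diagonal this is equivalent to $\Omega G+G\Omega^{T}=0$, and it is a short direct verification using $\epsilon^2=1$ and the precise placement of the $-\epsilon m$ and $\epsilon n$ entries in \eqref{SpacelikeSF}. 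This identity says exactly that the constant matrix $g\equiv G$ solves the matrix ODE; since the initial Gram matrix equals $G$ by our choice of initial datum, uniqueness forces $g(s)\equiv G$ on $I$. Hence $\{\gamma_s,v_1,v_2,\mu\}$ is a pseudo-orthonormal frame along the whole curve.

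It then remains to harvest the conclusions. From $\langle\gamma_s,\gamma_s\rangle\equiv-1$ we get $\gamma_s(s)\in AdS^3$, and the signature entries of $g$ place $(v_1,v_2)$ in $\Delta_1$, so $(\gamma_s,v_1,v_2):I\to AdS^3\times\Delta_1$ is well defined. The first row of \eqref{SpacelikeSF} reads $\gamma_s'=\alpha\mu$, whence $\langle\gamma_s',v_i\rangle=\alpha\langle\mu,v_i\rangle=0$ for $i=1,2$; by the equivalence recorded after the definition, this is precisely the framed-curve condition $(\gamma_s,v_i)^{*}\theta=0$. Reading $\alpha=\langle\gamma_s',\mu\rangle$, $\ell=-\epsilon\langle v_1',v_2\rangle$, $m=\langle v_1',\mu\rangle$, and $n=\langle v_2',\mu\rangle$ directly off the equations confirms that $(\alpha,\ell,m,n)$ are the curvatures of the constructed curve. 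One small loose end is to check that the ODE solution $\mu$ coincides with $\gamma_s\times v_1\times v_2$ for every $s$ and not only at $s_0$: since the frame is pseudo-orthonormal, $\gamma_s\times v_1\times v_2=\pm\mu$ with a sign that depends continuously on $s$, hence is constant, and it equals $+1$ at $s_0$ by our orientation choice.

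I expect the skew-symmetry identity $\Omega G+(\Omega G)^{T}=0$ to be the heart of the argument: it is what couples the exact signs in the Frenet matrix \eqref{SpacelikeSF} to the indefinite metric of $\mathbb{R}^4_2$ and thereby guarantees that integration preserves pseudo-orthonormality. Everything else is an application of the linear ODE existence--uniqueness theorem together with routine bookkeeping of pseudo-scalar products.
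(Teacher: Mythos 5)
Your proposal is correct and takes essentially the same route as the paper: both integrate the frame equations as a linear system $F'=\Omega F$ from a pseudo-orthonormal initial frame and use the fact that the coefficient matrix lies in the Lie algebra of the pseudo-orthogonal group (your identity $\Omega G+G\Omega^{T}=0$ is equivalent to the paper's $\eta A(s)+A^{t}(s)\eta=0$) to conclude by ODE uniqueness that pseudo-orthonormality propagates, after which the framed-curve conditions and curvatures are read off the equations. The remaining differences are cosmetic: you track the Gram matrix $g(s)$ directly and handle both signs $\epsilon=\pm1$ at once via $G=\mathrm{diag}(-1,\epsilon,-\epsilon,1)$, whereas the paper fixes $\epsilon=-1$ (declaring the other case similar), works with $F^{t}\eta F$, and obtains $\mu=\gamma_s\times v_1\times v_2$ from $\det F(s)=1$ rather than your sign-continuity argument.
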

\begin{proof}
There are two cases we need to deal with since $n_1$ and $n_2$ can be spacelike or timelike. However, we only prove the existence of pseudo-spherical spacelike framed curve whose $n_1$ is timelike and so $n_2$ is spacelike. The other case follows quite similarly. Choose a fixed value $s=s_0$ of the parameter. We consider the initial value problem 
\begin{equation}\label{exist1}
    \dfrac{d}{ds}F(s)=A(s)F(s),\quad F(s_0)=\eta,
\end{equation}
where $F(s)\in\mathbb{R}^{4\times 4}$ is a matrix, $\eta=\text{diag}[-1,-1,1,1]$, and 
\[ A(s)=\begin{pmatrix}
		0 & 0 & 0 & \alpha(s) 
        \\ 0 & 0 & \ell(s) &  m(s)
        \\ 0 & \ell(s) & 0 & n(s)
        \\ \alpha(s) & m(s) &  -n(s) & 0
	\end{pmatrix}. \]
 By the existence theorem of a solution of a system of linear ordinary differential equations, there exists a solution $F(s)$. It is easy to see that $A(s)\in \mathfrak{o}(2,2)$ that is $\eta A(s)+A^t(s) \eta=0$, where $\cdot^t$ stands for the matrix transpose. Using this equality and \eqref{exist1} yields
 \begin{align*}
     \dfrac{d}{ds}(F^t(s)\eta F(s))&=\left( \dfrac{d}{ds} F(s)\right)\eta F(s)+F^t(s)\eta \left( \dfrac{d}{ds}F(s) \right) \\
     &=(A(s)F(s))^t\eta F(s)+F^t(s)\eta A(s)F(s)\\
     &=F^t(s)(\eta A(s)+A^t(s)\eta)F(s)\\
     &=0.
 \end{align*}
 Therefore $F^t(s)\eta F(s)$ is constant. Then we have $F^t(s)\eta F(s)=F^t(s_0)\eta F(s_0)=\eta$. This means that $F(s)$ is a semi-orthogonal matrix. Now set $F(s)= \begin{pmatrix}
 \gamma(s) & v_1(s) & v_2(s)& \mu(s)
 \end{pmatrix}^t.$ Taking determinant on both sides of $F^t(s)\eta F(s)=\eta$ and then differentiating the resulting equation, we get $d/ds\left( \det(F(s)) \right)=0$. Then we find that 
 \[ \det(F(s))=\det(F(s_0))=\det(\eta)=1. \]
 Therefore $F(s)\in SO(2,2)$ and $\gamma(s)\times v_1(s)\times v_2(s)=\mu(s)$. Next consider the following initial value problem
 \[ \gamma'(s)=\alpha(s)\mu(s),\quad \gamma(s_0)=x\in AdS^3. \]
 By the existence theorem of a solution of a system of linear ordinary differential equations, there exists a solution $\gamma(s)$. Finally we conclude that there exists a pseudo-spherical spacelike framed curve $(\gamma_s,v_1,v_2):I\to AdS^3\times \Delta_1$ whose curvature is $(\alpha,\ell,m,n)$.
\end{proof}
 We have seen that the proof of the existence of these curves is similar to its counterparts; regular space curves \cite{gray}, Legendre curves in the unit tangent bundle \cite{fukunaga}, and framed curves \cite{honda}. However, the proof of the uniqueness part differs from these Euclidean counterparts since the latter make use of the positiveness of the Euclidean metric. A similar proof is given for timelike curves in Minkowski spacetime \cite{ForRom}. 
\begin{theorem}[Uniqueness of pseudo-spherical spacelike framed curves]
Let $(\gamma_s, v_1, v_2)$ and $(\Tilde{\gamma}_s, \Tilde{v}_1, \Tilde{v}_2)$ be two pseudo-spherical spacelike framed curves in $AdS^3$. Suppose that the curvatures $(\alpha,\ell,m,n)$ and $(\Tilde{\alpha}, \Tilde{\ell},\Tilde{m}, \Tilde{n})$ of these two framed curves coincide. Then $(\gamma_s, v_1, v_2)$ and $(\Tilde{\gamma}_s, \Tilde{v}_1, \Tilde{v}_2)$ are congruent as pseudo-spherical spacelike framed curves.
\end{theorem}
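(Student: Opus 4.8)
The plan is to avoid the Euclidean template entirely: instead of comparing the two frames through an inner-product functional (which would exploit positive definiteness), I will transport one frame onto the other by a single constant matrix, whose existence rests only on the uniqueness of solutions of linear ODEs together with the group structure of $SO(2,2)$.

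First I would write the two moving frames as $4\times4$ matrices whose rows are the frame vectors,
\begin{equation*}
F(s)=\begin{pmatrix}\gamma_s(s)&v_1(s)&v_2(s)&\mu(s)\end{pmatrix}^t,\qquad \tilde F(s)=\begin{pmatrix}\tilde\gamma_s(s)&\tilde v_1(s)&\tilde v_2(s)&\tilde\mu(s)\end{pmatrix}^t.
\end{equation*}
Exactly as in the existence theorem, the semi-orthonormality of each frame gives $F(s)^t\eta F(s)=\eta$ and $\tilde F(s)^t\eta\tilde F(s)=\eta$ with $\det F(s)=\det\tilde F(s)=1$, so $F(s),\tilde F(s)\in SO(2,2)$ for every $s\in I$. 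After fixing the causal character of the frame --- equivalently the sign $\epsilon=\langle v_1,v_1\rangle$, which must agree for the two curves if they are to be congruent by an element of $SO(2,2)$ --- the Frenet--Serret type formulas \eqref{SpacelikeSF} show that $F$ and $\tilde F$ solve one and the same matrix equation
\begin{equation*}
\frac{d}{ds}X(s)=\Omega(s)X(s),
\end{equation*}
where $\Omega(s)$ is the coefficient matrix in \eqref{SpacelikeSF}, built solely from the common curvature $(\alpha,\ell,m,n)$ and $\epsilon$. As in the existence proof, the two cases $\epsilon=\pm1$ are treated identically, so I would carry out the argument once.

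Next I would set $C(s)=F(s)^{-1}\tilde F(s)$ and prove it is constant. Differentiating $F(s)F(s)^{-1}=I$ gives $(F^{-1})'=-F^{-1}F'F^{-1}=-F^{-1}\Omega$, whence
\begin{equation*}
C'(s)=(F^{-1})'\,\tilde F+F^{-1}\tilde F'=-F^{-1}\Omega\,\tilde F+F^{-1}\Omega\,\tilde F=0.
\end{equation*}
Thus $C(s)\equiv C:=F(s_0)^{-1}\tilde F(s_0)$ for some fixed $s_0\in I$, and as a product of elements of the group $SO(2,2)$ we have $C\in SO(2,2)$. Putting $A:=C^t\in SO(2,2)$ and reading off the rows of $\tilde F(s)=F(s)C$ yields $\tilde\gamma_s(s)=A\gamma_s(s)$, $\tilde v_1(s)=Av_1(s)$, $\tilde v_2(s)=Av_2(s)$ for all $s$ (the identity $\tilde\mu(s)=A\mu(s)$ coming for free), which is precisely the asserted congruence.

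The step needing the most care is the claim that the transporting matrix is a genuine anti-de Sitter isometry, i.e.\ that $C\in SO(2,2)$. This is exactly where the Euclidean argument of \cite{fukunaga,honda} fails, since in an indefinite metric one cannot use Cauchy--Schwarz or a monotone comparison of frames; the group-theoretic observation that both frames lie in $SO(2,2)$ pointwise supplies the missing input, in the spirit of the Minkowski treatment \cite{ForRom}. I would close by remarking that the second causal case, and the entirely analogous timelike version in Section 5, require no new ideas.
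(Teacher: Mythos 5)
Your proof is correct, and it rests on the same pillar as the paper's: both frame matrices solve the identical linear system $X'=\Omega(s)X$ built from the common curvature (the paper calls this coefficient matrix $A(s)$), so linear ODE theory substitutes for the positive definiteness that the Euclidean arguments exploit. The execution, however, is genuinely different. The paper first moves one frame onto the other at $s_0$ by an isometry, then studies the \emph{left} quotient $B(s)$ defined by $F(s)=B(s)\tilde F(s)$, whose entries are the mutual pseudo-scalar products; this $B$ satisfies the commutator equation $B'=\Omega B-B\Omega$ with $B(s_0)=I_4$, and the uniqueness theorem for linear systems forces $B\equiv I_4$, i.e.\ the frames coincide after the initial alignment. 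You instead form the \emph{right} quotient $C(s)=F(s)^{-1}\tilde F(s)$ and observe that $C'\equiv 0$ is an identity, so no appeal to an ODE uniqueness theorem is needed at all, and no preliminary alignment either, because the constant $C^t$ is itself the congruence. What your route buys is a shorter, more self-contained argument; what it costs is that you must prove the transporting matrix is an isometry, a step the paper gets by fiat when it chooses the aligning element of $SO(2,2)$ at the outset.

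One assertion needs repair, though the repair is one line. The claim that $F(s),\tilde F(s)\in SO(2,2)$ for every $s$ is literally true only in the case $\epsilon=-1$: when $\epsilon=+1$ the Gram matrix of the frame $\{\gamma_s,v_1,v_2,\mu\}$ is $G=\mathrm{diag}(-1,1,-1,1)\neq\eta$, so $F\eta F^t=G$ and $F\notin O(2,2)$. What your argument actually requires is only that the two frames have the \emph{same} Gram matrix $G$, which is exactly the hypothesis $\tilde\epsilon=\epsilon$ that both you and the paper impose. Then
\[ C\eta C^t=F^{-1}\,\tilde F\eta\tilde F^t\,(F^{-1})^t=F^{-1}G\,(F^{-1})^t=\eta, \]
the last equality being $F\eta F^t=G$ rearranged, and $\det C=1$ because $\det F=\det\tilde F$ (both determinants are computed by the same formula, since each frame carries $\mu=\gamma\times v_1\times v_2$). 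So the two causal cases are indeed handled uniformly, but via this Gram-matrix computation rather than literal membership of $F$ in $SO(2,2)$. It is worth noting that this is precisely the point on which the paper is silent as well: its opening claim that the two frames at $s_0$ can be matched by some element of $SO(2,2)$ likewise presupposes that their Gram matrices agree.
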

\begin{proof}
Let $\{\gamma_s(s_0), n_1(s_0), n_2(s_0), \mu(s_0)\}$ and $\{\Tilde{\gamma}_s(s_0), \Tilde{n}_1(s_0), \Tilde{n}_2(s_0), \Tilde{\mu}(s_0)\}$ be the pseudo-orthonormal frames at $s_0\in I$ of $\gamma$ and $\Tilde{\gamma}$, respectively. It is always possible by using a transformation matrix $A\in SO(2,2)$ to set $\gamma(s_0)=\Tilde{\gamma}(s_0)$, $n_1(s_0)=\Tilde{n}_1(s_0)$, and $n_2(s_0)=\Tilde{n}_2(s_0)$. Then by definition we have $\mu(s_0)=\Tilde{\mu}(s_0)$. Since the curvatures of $\gamma$ and $\Tilde{\gamma}$ are coincident, we have 
\begin{equation*}
	\begin{pmatrix}
		\gamma_s'(s)\\
		v_1'(s)\\
		v_2'(s)\\
        \mu'(s)
	\end{pmatrix}= \begin{pmatrix}
		0 & 0 & 0 & \alpha(s) 
        \\ 0 & 0 & \ell(s) & m(s)
        \\ 0 & \ell(s) & 0 & n(s)
        \\ \alpha(s) & -\epsilon m(s) &  \epsilon n(s) & 0
        
	\end{pmatrix}\begin{pmatrix}
	\gamma_s(s)\\
		v_1(s)\\
		v_2(s)\\
        \mu(s)
	\end{pmatrix},
\end{equation*}
\begin{equation*}
	\begin{pmatrix}
		\Tilde{\gamma}_s'(s)\\
		 \Tilde{v}_1'(s)\\
		\Tilde{v}_2'(s)\\
        \Tilde{\mu}'(s)
	\end{pmatrix}= \begin{pmatrix}
		0 & 0 & 0 & \alpha(s) 
        \\ 0 & 0 & \ell(s) & m(s)
        \\ 0 & \ell(s) & 0 & n(s)
        \\ \alpha(s) & -\Tilde{\epsilon} m(s) &  \Tilde{\epsilon} n(s) & 0
	\end{pmatrix}\begin{pmatrix}
	\Tilde{\gamma}_s(s)\\
		 \Tilde{v}_1(s)\\
		\Tilde{v}_2(s)\\
        \Tilde{\mu}(s)
	\end{pmatrix},
\end{equation*}
where $\Tilde{\epsilon}=\epsilon$. These two equations can be written in a more compact form as
\begin{equation} \label{uniq1}
    \dfrac{d}{ds}F=A(s)F(s),
\end{equation}
\begin{equation} \label{uniq2}
    \dfrac{d}{ds}\Tilde{F}=A(s)\Tilde{F}(s).
\end{equation}
Notice that the frames $\{ \gamma,v_1,v_2,\mu\}$ and $\{\Tilde{\gamma}, \Tilde{v}_1, \Tilde{v}_2, \Tilde{\mu}\}$ are related by 
\begin{equation}
	\begin{pmatrix}
		\gamma_s(s)\\
		v_1(s)\\
		v_2(s)\\
        \mu(s)
	\end{pmatrix}= \begin{pmatrix}
		-\langle \gamma, \Tilde{\gamma} \rangle & \epsilon\langle \gamma, \Tilde{v}_1 \rangle & -\epsilon \langle \gamma, \Tilde{v}_2 \rangle & \langle \gamma, \Tilde{\mu} \rangle 
        \\ -\langle v_1, \Tilde{\gamma} \rangle & \epsilon\langle v_1, \Tilde{v}_1 \rangle & -\epsilon \langle v_1, \Tilde{v}_2 \rangle & \langle v_1, \Tilde{\mu} \rangle 
        \\ -\langle v_2, \Tilde{\gamma} \rangle & \epsilon\langle v_2, \Tilde{v}_1 \rangle & -\epsilon \langle v_2, \Tilde{v}_2 \rangle & \langle v_2, \Tilde{\mu} \rangle 
        \\ -\langle \mu, \Tilde{\gamma} \rangle & \epsilon\langle \mu, \Tilde{v}_1 \rangle & -\epsilon \langle \mu, \Tilde{v}_2 \rangle & \langle \mu, \Tilde{\mu} \rangle 
	\end{pmatrix}\begin{pmatrix}
	\Tilde{\gamma}_s(s)\\
		 \Tilde{v}_1(s)\\
		\Tilde{v}_2(s)\\
        \Tilde{\mu}(s)
	\end{pmatrix}.
\end{equation}
So we have
\begin{equation} \label{uniq3}
    F(s)=B(s)\Tilde{F}(s).
\end{equation}
Notice that $B(s_0)=I_4$, where $I_4$ is the $4\times 4$ identity matrix. Differentiating \eqref{uniq3} and introducing \eqref{uniq1} and \eqref{uniq2}, we obtain a system of first-order differential equations given by
\[ \dfrac{d}{ds}B(s)+B(s)A(s)-A(s)B(s)=0. \]
By assumption elements of $A(s)$ are differentiable functions. Therefore, this system of first-order differential equations admits a unique solution. It is easy to check that $B(s)=I_4$ is a solution of this system for all $s\in I$. Hence, this is the only solution. So we find that $U=\Tilde{U}$. 
\end{proof}
\begin{proposition}\label{propsparametrization}
    If $(\gamma_s, v_1,v_2)$ is a pseudo-spherical spacelike framed curve with the curvature $(\alpha, \ell, m,n)$, then $(\alpha, \ell, m,n)$ depends on the parametrization of $(\gamma_s, v_1,v_2)$.
\end{proposition}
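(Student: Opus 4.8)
The plan is to fix the framed curve and let the parametrization vary, then compute how the four curvature functions respond. Concretely, let $s=u(t)$ be any smooth change of parameter with $u'(t)\neq 0$, and form the reparametrized framed curve $(\bar\gamma_s,\bar v_1,\bar v_2)(t)=(\gamma_s(u(t)),v_1(u(t)),v_2(u(t)))$. One first checks that this is again a pseudo-spherical spacelike framed curve, since the contact condition $\langle\bar\gamma_s'(t),\bar v_i(t)\rangle=u'(t)\langle\gamma_s'(u(t)),v_i(u(t))\rangle=0$ is preserved because $u'(t)$ factors out. The goal is then to express $(\bar\alpha,\bar\ell,\bar m,\bar n)$ in terms of $(\alpha,\ell,m,n)$ and to read off that they do not agree in general.

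First I would record the two facts that make the computation clean. Since $\mu(s)=\gamma_s(s)\times v_1(s)\times v_2(s)$ is built pointwise and algebraically from the frame vectors, reparametrizing simply composes it with $u$, so $\bar\mu(t)=\mu(u(t))$; likewise $\bar\epsilon=\langle\bar v_1,\bar v_1\rangle=\epsilon\circ u=\epsilon$, because $\epsilon$ is the locally constant sign $\pm 1$. I would also note from the first row of \eqref{SpacelikeSF} that $\gamma_s'=\alpha\mu$, whence $\alpha=\langle\gamma_s',\mu\rangle=\alpha\langle\mu,\mu\rangle$ forces $\langle\mu,\mu\rangle=1$; this is precisely what prevents the $\alpha$-computation from picking up an extra sign.

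The main step is then a direct application of the chain rule in each of the four defining formulas. Using $\bar\gamma_s'(t)=u'(t)\,\gamma_s'(u(t))$ and $\bar v_i'(t)=u'(t)\,v_i'(u(t))$ together with $\bar\mu=\mu\circ u$ and $\bar\epsilon=\epsilon$, I obtain
\begin{equation*}
(\bar\alpha,\bar\ell,\bar m,\bar n)(t)=u'(t)\,\bigl(\alpha,\ell,m,n\bigr)(u(t)).
\end{equation*}
Since a generic reparametrization has $u'(t)\neq 1$, all four curvature functions genuinely change, which is exactly the assertion of the proposition.

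I do not expect a serious obstacle here, as the statement is in essence a transformation rule rather than a deep fact. The only points requiring care are the small algebraic verifications that $\mu$ and $\epsilon$ are unaffected by the change of parameter and that $\langle\mu,\mu\rangle=1$, since these are what guarantee that a single common factor $u'(t)$ appears uniformly in all four curvatures instead of a more intricate dependence.
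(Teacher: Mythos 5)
Your proposal is correct and follows essentially the same route as the paper: reparametrize by $u$, apply the chain rule to the defining formulas for $(\alpha,\ell,m,n)$ while noting that $\mu$ and $\epsilon$ are unchanged, and conclude that the curvature rescales by the factor $u'$, hence depends on the parametrization. The extra remarks about $\langle\mu,\mu\rangle=1$ are harmless but not needed, since the factor $u'$ appears directly from the pairings $\langle\gamma_s',\mu\rangle$, $\langle v_1',v_2\rangle$, $\langle v_1',\mu\rangle$, $\langle v_2',\mu\rangle$ regardless of the sign of $\langle\mu,\mu\rangle$.
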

\begin{proof}
Let $I$ and $\Tilde{I}$ be intervals. A smooth function $u:\Tilde{I}\to I$ is a (positive) change of parameter if $u$ is surjective and has positive derivatives at every point.
    Let $(\tilde{\gamma}_s, \tilde{v}_1,\tilde{v}_2)$ be a pseudo-spherical spacelike framed curve and let $(\tilde{\alpha}, \tilde{\ell}, \tilde{m},\tilde{n})$ be the curvature of this curve.  Suppose that $(\gamma_s, v_1,v_2)$ and $(\tilde{\gamma}_s, \tilde{v}_1,\tilde{v}_2)$ are parametrically equivalent by the change of parameter
    $u:\Tilde{I}\to I$, that is, $(\tilde{\gamma}_s(s), \tilde{v}_1(s),\tilde{v}_2(s))=(\gamma_s(u(s)), v_1(u(s)),v_2(u(s)))$ for all $s\in \Tilde{I}$. Then using \eqref{SpacelikeSF}
   \begin{equation*}
       (\tilde{\alpha}(s), \tilde{\ell}(s), \tilde{m}(s),\tilde{n}(s))=u'(s)(\alpha(u(s)), \ell(u(s)), m(u(s)),n(u(s))).
   \end{equation*}
   Hence the curvature depends on the parametrization.
\end{proof}
Let $(\gamma_s,v_1,v_2):I\to AdS^3\times \Delta_1$ be a pseudo-spherical spacelike framed curve with the curvature $(\alpha,\ell,m,n)$. Now we shall show that it is possible to construct a new frame along $\gamma_s$ similar to the Bishop frame \cite{Bishop} by leaving $\mu$ fixed and rotating $(v_1,v_2)$. For a smooth function $\theta(s):I\to\mathbb{R}$, define $(\Bar{v}_1, \Bar{v}_2)\in \Delta_1$ by 
\[ \begin{pmatrix}
    \Bar{v}_1(s) \\ 
    \Bar{v}_2(s)
\end{pmatrix}=\begin{pmatrix}
    \cosh\theta(s) & \sinh\theta(s) \\
    \sinh\theta(s) & \cosh\theta(s)
\end{pmatrix}\begin{pmatrix}
    v_1(s) \\ 
    v_2(s)
\end{pmatrix}. \]
Then $(\gamma_s, \Bar{v}_1, \Bar{v}_2):I\to AdS^3\times \Delta_1$ is also a pseudo-spherical spacelike framed curve. It is easy to see that $\Bar{\mu}=\gamma_s(s)\times\Bar{v}_1(s)\times\Bar{v}_2(s)=\mu(s)$. So the set $\{ \gamma_s, \Bar{v}_1, \Bar{v}_2,\mu(s)\}$ is a pseudo-orthonormal frame along $\gamma_s$. Using the formulas in \eqref{SpacelikeSF}, we find that 
\begin{align*}
    \Bar{v}_1(s)&=(\theta'(s)+\ell(s))\sinh\theta(s)\,v_1(s)+(\theta'(s)+\ell(s))\cosh\theta(s)\,v_2(s)\\
    &\qquad+(m(s)\cosh\theta(s)+n(s)\sinh\theta(s))\mu(s),\\
    \Bar{v}_2(s)&=(\theta'(s)+\ell(s))\cosh\theta(s)\,v_1(s)+(\theta'(s)+\ell(s))\sinh\theta(s)\,v_2(s)\\
    &\qquad+(m(s)\sinh\theta(s)+n(s)\cosh\theta(s))\mu(s).
\end{align*}
Now set $\theta'(s)=-\ell(s)$. In this case we call the set $\{ \gamma_s, \Bar{v}_1, \Bar{v}_2,\mu(s)\}$ the \textit{Bishop-type frame} along $\gamma_s$. We have the following derivative formulas.
\begin{equation}\label{SpacelikeSF}
	\begin{pmatrix}
		\gamma_s'(s)\\
	\Bar{v}_1'(s)\\
		\Bar{v}_2'(s)\\
        \mu'(s)
	\end{pmatrix}= \begin{pmatrix}
		0 & 0 & 0 & \alpha(s) 
        \\ 0 & 0 & 0 & \Bar{m}(s)
        \\ 0 & 0 & 0 & \Bar{n}(s)
        \\ \alpha(s) & -\epsilon \Bar{m}(s) &  \epsilon \Bar{n}(s) & 0
        
	\end{pmatrix}\begin{pmatrix}
	\gamma_s(s)\\
		\Bar{v}_1(s)\\
		\Bar{v}_2(s)\\
        \mu(s)
	\end{pmatrix},
\end{equation}
where
\[ \begin{pmatrix}
    \Bar{m}(s) \\ 
    \Bar{n}(s)
\end{pmatrix}=\begin{pmatrix}
    \cosh\theta(s) & \sinh\theta(s) \\
    \sinh\theta(s) & \cosh\theta(s)
\end{pmatrix}\begin{pmatrix}
    m(s) \\ 
    n(s)
\end{pmatrix}. \]
Let $(\gamma_s,v_1,v_2)$ be a pseudo-spherical spacelike framed immersion. 
We will mainly use another moving frame along $\gamma_s$ defined as follows. Let $(f_1,f_2)\in \Delta_1$ defined by 
\[ f_1(s)=\dfrac{n(s)v_1(s)-m(s)v_2(s)}{\sqrt{|n^2(s)-m^2(s)|}},\quad f_2(s)=\dfrac{-m(s)v_1(s)+n(s)v_2(s)}{\sqrt{|n^2(s)-m^2(s)|}} \]
where $m^2(s)\neq n^2(s)$ for all $s\in I$. Then $(\gamma_s,f_1,f_2):I\to AdS^3\times \Delta_1$ is also a pseudo-spherical spacelike framed immersion and $\{\gamma_s,f_1,f_2,\mu=\gamma_s\times f_1\times f_2\}$ is a pseudo-orthonormal frame along $\gamma_s$. The derivative formulas for this new frame are 
\begin{equation}\label{SpacelikeNSF}
	\begin{pmatrix}
		\gamma_s'(s)\\
	f_1'(s)\\
		f_2'(s)\\
        \mu'(s)
	\end{pmatrix}= \begin{pmatrix}
		0 & 0 & 0 & \alpha(s) 
        \\ 0 & 0 & \hat{\ell}(s) & 0
        \\ 0 & \hat{\ell}(s) & 0 & \hat{n}(s)
        \\ \alpha(s) &  0 &   \hat{\epsilon}\hat{n}(s) & 0
        
	\end{pmatrix}\begin{pmatrix}
	\gamma_s(s)\\
		f_1(s)\\
		f_2(s)\\
        \mu(s)
	\end{pmatrix},
\end{equation}
where $\hat{\epsilon}=\langle f_1,f_1\rangle$,
\[ \hat{\ell}(s)=\ell(s)+\dfrac{m(s)n'(s)-n(s)m'(s)}{n^2(s)-m^2(s)},\quad \hat{n}(s)=\hat{\epsilon}\epsilon\sqrt{|n^2(s)-m^2(s)|}.  \]

We close this section by defining anti-de Sitter parallel curves of pseudo-spherical spacelike framed curves. Let $(\gamma_s,v_1,v_2):I\to AdS^3\times \Delta_1$ be a pseudo-spherical spacelike framed curve with the curvature $(\alpha,\ell,m,n)$. We define a mapping $\gamma_s^\phi:I\to AdS^3$ by
\begin{equation}\label{spaceparallel}
    \gamma_s^\phi(s)=f(\phi)\gamma_s(s)+g(\phi)\left( P(\theta(s)) v_1(s)+R(\theta(s))v_2(s) \right),
\end{equation}
where $\phi$ is a fixed real number, $\theta'(s)=-\ell(s)$, and 
\[ \big(P(\theta(s)),R(\theta(s))\big)\in\{ (\cosh\theta(s),\sinh\theta(s)), (\sinh\theta(s),\cosh\theta(s)) \}. \] 
Moreover, if $\epsilon(P^2(\theta(s))-R^2(\theta(s)))=-1$, then $f(\phi)=\cos\phi$ and $g(\phi)=\sin\phi$, and if $\epsilon(P^2(\theta(s))-R^2(\theta(s)))=1$, then $f(\phi)=\cosh\phi$ and $g(\phi)=\sinh\phi$. This mapping is called the anti-de Sitter parallel of $\gamma_s$. 
\begin{proposition}\label{spaceparallelprop}
    Let $(\gamma_s,v_1,v_2):I\to AdS^3\times \Delta_1$ be a pseudo-spherical spacelike framed curve with the curvature $(\alpha,\ell,m,n)$. For a fixed real number $\phi$ and $\theta'(s)=-\ell(s)$, the anti-de Sitter parallel $(\gamma_s^\phi,v_1^\phi,v_2^\phi):I\to AdS^3\times \Delta_1$ is a pseudo-spherical spacelike framed curve with the curvature $(\alpha^\phi,0,m^\phi, n^\phi)$, where
    \begin{align*}
        v_1^\phi(s)&=\epsilon(P^2(\theta(s))-R^2(\theta(s))) g(\phi)\gamma_s(s)+f(\phi)\left(P(\theta(s)) v_1(s)+R(\theta(s))v_2(s)  \right),\\
        v_2^\phi(s)&=R(\theta(s)) v_1(s)+P(\theta(s))v_2(s),\\
        \alpha^\phi(s)&=\alpha(s)f(\phi)+g(\phi)\big(P(\theta(s)) m(s)+R(\theta(s))n(s)\big),\\
        m^\phi(s)&= \epsilon(P^2(\theta(s))-R^2(\theta(s))) g(\phi)\alpha(s)+f(\phi)(P(\theta(s)) m(s)+R(\theta(s))n(s)),\\
        n^\phi(s)&= R(\theta(s)) m(s)+P(\theta(s))n(s).
    \end{align*}
\end{proposition}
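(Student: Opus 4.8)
The plan is to verify directly that the triple $(\gamma_s^\phi, v_1^\phi, v_2^\phi)$ satisfies the defining conditions of a pseudo-spherical spacelike framed curve and then to read off its curvature from the derivative formulas. The computation is driven by two elementary observations I would record first. Writing $w(s) = P(\theta(s))v_1(s) + R(\theta(s))v_2(s)$, note that regardless of which of the two admissible pairs $(P,R)$ is chosen one has $P^2 - R^2 = \pm 1$, a constant; consequently $\epsilon(P^2 - R^2)$ is one of $\pm 1$, and all coefficients $f(\phi), g(\phi), \epsilon(P^2-R^2)$ appearing in \eqref{spaceparallel} are constant in $s$. Moreover, in both cases the identities $P'(\theta(s)) = -\ell(s)R(\theta(s))$ and $R'(\theta(s)) = -\ell(s)P(\theta(s))$ hold, since $\theta' = -\ell$. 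Combining these with the structure equations \eqref{SpacelikeSF} and with $\langle v_1, v_1\rangle = \epsilon$, $\langle v_2, v_2\rangle = -\epsilon$, $\langle \mu, \mu\rangle = 1$, a short computation gives $w' = (P m + R n)\mu$ alongside $\gamma_s' = \alpha\mu$.

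First I would check that $\gamma_s^\phi$ lands in $AdS^3$. Since $\langle w, w\rangle = \epsilon(P^2 - R^2)$ and $\langle \gamma_s, w\rangle = 0$, expanding $\langle \gamma_s^\phi, \gamma_s^\phi\rangle = -f^2(\phi) + g^2(\phi)\epsilon(P^2-R^2)$ and substituting the two prescribed choices of $(f,g)$ yields $-1$ in each case; this is exactly where the dichotomy $\epsilon(P^2-R^2) = \mp 1 \Leftrightarrow (f,g) = (\cos\phi,\sin\phi)$ or $(\cosh\phi,\sinh\phi)$ is forced. Differentiating \eqref{spaceparallel} and inserting $\gamma_s' = \alpha\mu$ and $w' = (Pm+Rn)\mu$ collapses everything onto $\mu$, giving $(\gamma_s^\phi)' = \alpha^\phi\mu$ with $\alpha^\phi = \alpha f(\phi) + g(\phi)(Pm+Rn)$, which already matches the stated first curvature and renders the two framed conditions $\langle(\gamma_s^\phi)', v_i^\phi\rangle = 0$ automatic once $v_1^\phi, v_2^\phi \perp \mu$ is known.

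Next I would confirm $(v_1^\phi, v_2^\phi) \in \Delta_1$. Writing $k = \epsilon(P^2-R^2)$, a direct expansion gives $\langle v_1^\phi, v_1^\phi\rangle = -g^2(\phi) + kf^2(\phi)$ and $\langle v_2^\phi, v_2^\phi\rangle = -k$, so one of the two vectors is timelike and the other spacelike in each regime (consistent with $\Delta_1$), while the mixed term $\langle v_1^\phi, v_2^\phi\rangle$ vanishes because $\langle \gamma_s, v_2^\phi\rangle = 0$ and $\langle w, v_2^\phi\rangle = \epsilon PR - \epsilon PR = 0$. Since $\gamma_s^\phi, v_1^\phi, v_2^\phi$ all lie in $\mathrm{span}\{\gamma_s, v_1, v_2\}$ — here I use the invertibility of the coefficient matrix $\begin{pmatrix} f & g \\ kg & f\end{pmatrix}$, whose determinant is $\pm 1$ — their common pseudo-orthogonal complement is $\mathbb{R}\mu$, and an orientation check gives $\mu^\phi = \gamma_s^\phi \times v_1^\phi \times v_2^\phi = \mu$.

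Finally, the curvatures follow by differentiating $v_1^\phi$ and $v_2^\phi$. Because $k, f, g$ are constant, $(v_1^\phi)' = \bigl(kg\alpha + f(Pm+Rn)\bigr)\mu = m^\phi\mu$, whence $\langle(v_1^\phi)', v_2^\phi\rangle = 0$ and therefore $\ell^\phi = 0$; likewise, using $P' = -\ell R$ and $R' = -\ell P$, all $v_1, v_2$ contributions in $(v_2^\phi)'$ cancel and $(v_2^\phi)' = (Rm + Pn)\mu = n^\phi\mu$. Pairing these derivatives with $\mu^\phi = \mu$ reproduces precisely the curvature $(\alpha^\phi, 0, m^\phi, n^\phi)$. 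The only genuinely delicate points are bookkeeping: treating the two admissible choices of $(P,R)$ and the two sign regimes $\epsilon(P^2-R^2) = \pm 1$ uniformly, and fixing the sign in $\mu^\phi = \mu$. The observations that $P^2 - R^2$ is constant and that $P' = -\ell R$, $R' = -\ell P$ hold in both subcases are what prevent these from splitting into a proliferation of separate calculations.
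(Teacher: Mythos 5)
Your proposal is correct and takes essentially the same route as the paper's own proof: verify the orthogonality and unit-norm conditions directly, then differentiate using the structure equations \eqref{SpacelikeSF} so that $(\gamma_s^\phi)'$, $(v_1^\phi)'$, $(v_2^\phi)'$ all collapse onto $\mu$, and read off the curvature $(\alpha^\phi,0,m^\phi,n^\phi)$ from $\mu^\phi=\mu$. You simply spell out details the paper declares ``easy to see'' (the identities $P'=-\ell R$, $R'=-\ell P$, the constancy of $P^2-R^2$, and the membership checks $\gamma_s^\phi\in AdS^3$, $(v_1^\phi,v_2^\phi)\in\Delta_1$), so no substantive difference in method.
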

\begin{proof}
It is easy to see that $\langle \gamma_s^\phi, v_1^\phi\rangle=0$, $\langle \gamma_s^\phi, v_2^\phi\rangle=0$, and $\langle v_1^\phi, v_2^\phi\rangle=0$. The derivative of \eqref{spaceparallel} gives
\[ (\gamma_s^\phi)'(s)=\left(\alpha(s)f(\phi)+g(\phi)\big(P(\theta(s)) m(s)+R(\theta(s))n(s) \big)\right)\mu(s). \]
Hence $\langle (\gamma_s^\phi)', v_1^\phi\rangle=0$ and $\langle (\gamma_s^\phi)', v_2^\phi\rangle=0$. So $(\gamma_s^\phi,v_1^\phi,v_2^\phi)$ is a pseudo-spherical spacelike framed curve. We also have $\mu^\phi(s)=\mu(s)$. Therefore $\alpha^\phi$ is just the coefficient of $\mu(s)$ in the above equation. An easy calculation shows that
\begin{align*}
    (v_1^\phi)'(s)&=\left(\epsilon(P^2(\theta(s))-R^2(\theta(s))) g(\phi)\alpha(s)+f(\phi)(P(\theta(s)) m(s)+R(\theta(s))n(s))\right)\mu(s),\\
    (v_2^\phi)'(s)&= \left( R(\theta(s)) m(s)+P(\theta(s))n(s) \right)\mu(s),
\end{align*}
which concludes the proof. Note that if $(\gamma_s,v_1,v_2)$ is a pseudo-spherical spacelike framed immersion, then $(\gamma_s^\phi,v_1^\phi,v_2^\phi)$ is also a pseudo-spherical spacelike framed immersion.
\end{proof}
\subsection{Pseudo-spherical timelike framed curves in $AdS^3$}
Let $\gamma_t:I\to AdS^3$ be a smooth curve. Then $(\gamma_t, v_1,v_2):I\to AdS^3\times \Delta_5$ is called a \textit{pseudo-spherical timelike framed curve} if $(\gamma_t(s),v_1(s))^*\phi=0$ and $(\gamma_t(s),v_2(s))^*\phi=0$ for all $s\in I$, where
\begin{equation*}
	\Delta_5=\{(\mathbf{u},\mathbf{w})\,\vert\,\langle\mathbf{u},\mathbf{w}\rangle=0\}\subset S^3_2 \times S^3_2
\end{equation*}
is a $4$-dimensional contact manifold, and $\phi$ is a canonical contact $1$-form on $\Delta_5$ \cite{CI}. The condition $(\gamma_t(s), v_i(s))^*\phi=0$ ($i=1,2$) is equivalent to $\langle \gamma_t'(s),v_i(s)\rangle=0$ ($i=1,2$) for all $s\in I$. If $(\gamma_t, v_1,v_2)$ is an immersion, then it is called a \textit{pseudo-spherical timelike framed immersion}.

We call $\gamma_t:I\to AdS^3$ a \textit{pseudo-spherical timelike framed base curve} if there exists a smooth map $(v_1,v_2):I\to\Delta_5$ for which $(\gamma_t, v_1,v_2)$ is a pseudo-spherical timelike framed curve.

Let $\mu(s)=\gamma_t(s)\times v_1(s)\times v_2(s)$. Then the set $\{\gamma_t(s), v_1(s),v_2(s),\mu(s) \}$ is a pseudo-orthonormal frame along $\gamma_t$. This frame is well-defined even at singular points of $\gamma_t$. The Frenet-Serret type formulas for this frame are 
\begin{equation}\label{TimelikeSF}
	\begin{pmatrix}
		\gamma_t'(s)\\
		v_1'(s)\\
		v_2'(s)\\
        \mu'(s)
	\end{pmatrix}= \begin{pmatrix}
		0 & 0 & 0 & \alpha(s) 
        \\ 0 & 0 & \ell(s) & m(s)
        \\ 0 & -\ell(s) & 0 & n(s)
        \\ -\alpha(s) &  m(s) &  n(s) & 0
        
	\end{pmatrix}\begin{pmatrix}
	\gamma_t(s)\\
		v_1(s)\\
		v_2(s)\\
        \mu(s)
	\end{pmatrix},
\end{equation}
where $\alpha(s)=-\langle \gamma_t'(s), \mu(s)\rangle$, $\ell(s)=\langle v_1'(s), v_2(s)\rangle$, $m(s)=-\langle v_1'(s), \mu(s)\rangle$, and $n(s)=-\langle v_2'(s), \mu(s)\rangle$. We call the mapping $(\alpha, \ell, m,n):I\to\mathbb{R}^4$ the \textit{curvature} of the pseudo-spherical timelike framed curve $(\gamma_t,v_1,v_2)$. Notice that $s_0$ is a singular point of $\gamma_t$ if and only if $\alpha(s_0)=0$.
\begin{definition}
Let $(\gamma, v_1,v_2)$ and $(\Tilde{\gamma}, \Tilde{v}_1, \Tilde{v}_2)$ be two pseudo-spherical timelike framed curves in $AdS^3$. We say that $(\gamma, v_1,v_2)$ and $(\Tilde{\gamma}, \Tilde{v}_1, \Tilde{v}_2)$ are congruent as pseudo-spherical framed curves if there exists a matrix $A\in SO(2,2)$ such that for all $s$
\[ \Tilde{\gamma}(s)=A(\gamma(s)),\quad \Tilde{v}_1(s)=A(v_1(s)),\quad \Tilde{v}_2(s)=A(v_2(s)). \]
\end{definition}
\begin{theorem}[Existence and uniqueness of pseudo-spherical timelike framed curves]
For a smooth mapping $(\alpha,\ell,m,n):I\to\mathbb{R}^4$, there exists a pseudo-spherical timelike framed curve $(\gamma_t,v_1,n_2)$ such that $\alpha$, $\ell$, $m$, and $n$ are the curvatures of $\gamma_t$. Any other pseudo-spherical timelike framed curve $\Tilde{\gamma}_t$ in $AdS^3$ with the same curvature $(\alpha, \ell, m,n)$ is congruent to $\gamma_t$. 
\end{theorem}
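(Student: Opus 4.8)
The plan is to mirror the existence-and-uniqueness argument already carried out for the spacelike case, adapting only the signature data. The first task is to pin down the correct pseudo-orthonormal signature for the frame $\{\gamma_t, v_1, v_2, \mu\}$. Since $\gamma_t\in AdS^3$ is timelike with $\langle\gamma_t,\gamma_t\rangle=-1$ and $v_1,v_2\in S^3_2$ are spacelike with $\langle v_i,v_i\rangle=1$, the index-$2$ constraint forces $\mu=\gamma_t\times v_1\times v_2$ to be timelike, $\langle\mu,\mu\rangle=-1$. Hence the relevant signature matrix is $\eta=\mathrm{diag}[-1,1,1,-1]$. In contrast with the spacelike case, $v_1$ and $v_2$ are both spacelike, so no $\epsilon$ factor appears and no case distinction is needed; the coefficient matrix $A(s)$ of \eqref{TimelikeSF} can be used verbatim.

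For existence, I would set up the initial value problem $\frac{d}{ds}F(s)=A(s)F(s)$ with $F(s_0)=\eta$, where $A(s)$ is the matrix in \eqref{TimelikeSF}, and invoke the standard existence theorem for linear ODEs to obtain a solution $F(s)$. The key verification is that $A(s)\in\mathfrak{o}(2,2)$, i.e. $\eta A(s)+A^t(s)\eta=0$; this is a direct entrywise check against the signs in \eqref{TimelikeSF}. From this, exactly as in the spacelike proof, $\frac{d}{ds}(F^t\eta F)=0$, so $F^t(s)\eta F(s)\equiv\eta$ and $F(s)$ is semi-orthogonal. Writing $F=(\gamma_t,v_1,v_2,\mu)^t$, the diagonal of $\eta$ encodes the correct causal characters and the off-diagonal zeros encode pseudo-orthogonality, while the determinant argument gives $\det F(s)\equiv\det\eta=1$, so $F(s)\in SO(2,2)$ and $\mu=\gamma_t\times v_1\times v_2$. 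Solving $\gamma_t'(s)=\alpha(s)\mu(s)$ with $\gamma_t(s_0)\in AdS^3$ then yields a pseudo-spherical timelike framed curve with the prescribed curvature $(\alpha,\ell,m,n)$.

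For uniqueness, I would align the two frames at $s_0$ by an element of $SO(2,2)$, so that both $F$ and $\tilde F$ satisfy the same linear system $\frac{d}{ds}F=A(s)F$. Expanding each vector of the first frame in the pseudo-orthonormal second frame produces a transition matrix $B(s)$ with $F=B\tilde F$ and $B(s_0)=I_4$, where the expansion coefficients carry the signs dictated by $\eta=\mathrm{diag}[-1,1,1,-1]$ (so the first and fourth columns pick up a minus sign). Differentiating $F=B\tilde F$ and substituting the two systems gives $\frac{d}{ds}B+BA-AB=0$, a linear ODE in $B$ with initial condition $B(s_0)=I_4$. Since $B\equiv I_4$ is visibly a solution and such solutions are unique, the two frames coincide, which is the asserted congruence.

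The substantive point, as the remark preceding the spacelike uniqueness theorem already flags, is that the indefiniteness of the metric rules out the Euclidean-style uniqueness argument that relies on positivity; the real idea is to recast uniqueness as an ODE-uniqueness statement for the transition matrix $B$ and to carry the correct signs through $\eta$. Once the signature $\eta=\mathrm{diag}[-1,1,1,-1]$ is fixed and $A(s)\in\mathfrak{o}(2,2)$ is confirmed, every remaining step is a routine transcription of the spacelike argument, and I expect no genuine obstacle beyond the bookkeeping of signs.
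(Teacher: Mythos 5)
Your identification of the frame's causal characters (Gram matrix $\mathrm{diag}[-1,1,1,-1]$, with $\gamma_t,\mu$ timelike and $v_1,v_2$ spacelike), your verification that $\eta A(s)+A^t(s)\eta=0$ for the matrix $A$ of \eqref{TimelikeSF}, and your uniqueness argument are all correct. The genuine error is in the existence step, at the sentence ``the diagonal of $\eta$ encodes the correct causal characters and the off-diagonal zeros encode pseudo-orthogonality.'' This conflates two different bilinear forms. The conserved quantity $F^t\eta F$ (conserved precisely because $\eta A+A^t\eta=0$) expresses orthonormality of the rows of $F$ with respect to the auxiliary form $\eta=\mathrm{diag}[-1,1,1,-1]$, \emph{not} with respect to the ambient metric $G=\mathrm{diag}[-1,-1,1,1]$ of $\mathbb{R}^4_2$. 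In the paper's spacelike proof this distinction is invisible because there the frame's Gram matrix happens to equal $G$ (the case treated has $\gamma,v_1$ timelike and $v_2,\mu$ spacelike), so taking $F(s_0)=\eta$ and reading causal characters off $F^t\eta F=\eta$ is legitimate. In the timelike case it is not: your initial value $F(s_0)=\eta$ is not an admissible frame at all, since its rows $(-1,0,0,0)$, $(0,1,0,0)$, $(0,0,1,0)$, $(0,0,0,-1)$ have $G$-causal types (timelike, timelike, spacelike, spacelike); in particular $v_1(s_0)=(0,1,0,0)\notin S^3_2$ and $\mu(s_0)$ is spacelike rather than timelike. The solution of your initial value problem is therefore $\eta$-orthogonal but its rows are not $G$-pseudo-orthonormal (indeed $FGF^t$ is not even constant along it, since $AG+GA^t\neq 0$), so what you construct is not a pseudo-spherical timelike framed curve.

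The repair is small but essential. Take as initial condition any genuine frame with the required characters, e.g.\ $F(s_0)=P$ whose rows are $e_1,e_3,e_4,e_2$, so that $PGP^t=\eta$ and $\det P=1$. The same differentiation argument then gives $F^t\eta F\equiv P^t\eta P=G$, which is equivalent to $FGF^t\equiv\eta$: the rows remain mutually $G$-orthogonal with $\gamma_t,\mu$ timelike and $v_1,v_2$ spacelike, so $\gamma_t\in AdS^3$ and $v_1,v_2\in S^3_2$ as required. (Equivalently, one can work directly with the Gram matrix $M=FGF^t$, which satisfies $M'=AM+MA^t$ with $M(s_0)=\eta$, and observe that $A\eta+\eta A^t=0$ makes $M\equiv\eta$ the unique solution.) The determinant argument, the identification $\mu=\gamma_t\times v_1\times v_2$, and the final integration of $\gamma_t'=\alpha\mu$ then go through exactly as you wrote, and your uniqueness proof (alignment at $s_0$, transition matrix $B$ with column signs taken from $\eta$, the system $B'+BA-AB=0$ with $B(s_0)=I_4$, hence $B\equiv I_4$) stands without change.
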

\begin{remark}
Note that the curvature of a pseudo-spherical timelike framed curve depends on the parametrization. This can be easily proved similar to Proposition \ref{propsparametrization}. 
\end{remark}
Let $(\gamma_t,v_1,v_2):I\to AdS^3\times \Delta_5$ be a pseudo-spherical timelike framed curve with the curvature $(\alpha,\ell,m,n)$. For a smooth function $\theta(s):I\to\mathbb{R}$, define $(\Bar{v}_1, \Bar{v}_2)\in \Delta_5$ by 
\[ \begin{pmatrix}
    \Bar{v}_1(s) \\ 
    \Bar{v}_2(s)
\end{pmatrix}=\begin{pmatrix}
    \cos\theta(s) & -\sin\theta(s) \\
    \sin\theta(s) & \cos\theta(s)
\end{pmatrix}\begin{pmatrix}
    v_1(s) \\ 
    v_2(s)
\end{pmatrix}. \]
Then $(\gamma_t, \Bar{v}_1, \Bar{v}_2):I\to AdS^3\times \Delta_5$ is also a pseudo-spherical timelike framed curve. It is easy to see that $\Bar{\mu}=\gamma_t(s)\times\Bar{v}_1(s)\times\Bar{v}_2(s)=\mu(s)$. So the set $\{ \gamma_t, \Bar{v}_1, \Bar{v}_2,\mu(s)\}$ is a pseudo-orthonormal frame along $\gamma_t$. Using the formulas in \eqref{TimelikeSF}, we find that 
\begin{align*}
    \Bar{v}_1(s)&=(-\theta'(s)+\ell(s))\sin\theta(s)\,v_1(s)+(-\theta'(s)+\ell(s))\cos\theta(s)\,v_2(s)\\
    &\qquad+(m(s)\cos\theta(s)-n(s)\sin\theta(s))\mu(s),\\
    \Bar{v}_2(s)&=(\theta'(s)-\ell(s))\cos\theta(s)\,v_1(s)+(\theta'(s)-\ell(s))\sin\theta(s)\,v_2(s)\\
    &\qquad+(m(s)\sin\theta(s)+n(s)\cos\theta(s))\mu(s).
\end{align*}
Now set $\theta'(s)=\ell(s)$. In this case we call the set $\{ \gamma_s, \Bar{v}_1, \Bar{v}_2,\mu(s)\}$ the \textit{Bishop-type frame} along $\gamma_t$. We have 
\begin{equation}\label{SpacelikeSF}
	\begin{pmatrix}
		\gamma_t'(s)\\
	\Bar{v}_1'(s)\\
		\Bar{v}_2'(s)\\
        \mu'(s)
	\end{pmatrix}= \begin{pmatrix}
		0 & 0 & 0 & \alpha(s) 
        \\ 0 & 0 & 0 & \Bar{m}(s)
        \\ 0 & 0 & 0 & \Bar{n}(s)
        \\ -\alpha(s) &  \Bar{m}(s) &   \Bar{n}(s) & 0
        
	\end{pmatrix}\begin{pmatrix}
	\gamma_t(s)\\
		\Bar{v}_1(s)\\
		\Bar{v}_2(s)\\
        \mu(s)
	\end{pmatrix},
\end{equation}
where
\[ \begin{pmatrix}
    \Bar{m}(s) \\ 
    \Bar{n}(s)
\end{pmatrix}=\begin{pmatrix}
    \cos\theta(s) & -\sin\theta(s) \\
    \sin\theta(s) & \cos\theta(s)
\end{pmatrix}\begin{pmatrix}
    m(s) \\ 
    n(s)
\end{pmatrix}. \]
Let $(\gamma_t,v_1,v_2)$ be a pseudo-spherical timelike framed immersion. 
Now we introduce another moving frame along $\gamma_t$ that will be the main tool in our results. Define $(f_1,f_2)\in \Delta_5$ by 
\begin{equation}\label{fsfortimelike}
    f_1(s)=\dfrac{n(s)v_1(s)-m(s)v_2(s)}{\sqrt{n^2(s)+m^2(s)}},\quad f_2(s)=\dfrac{m(s)v_1(s)+n(s)v_2(s)}{\sqrt{n^2(s)+m^2(s)}}
\end{equation}
where $(m(s),n(s))\neq (0,0)$ for all $s\in I$. Then $(\gamma_t,f_1,f_2):I\to AdS^3\times \Delta_5$ is a pseudo-spherical timelike framed immersion and $\{\gamma_t,f_1,f_2,\mu=\gamma_t\times f_1\times f_2\}$ is a pseudo-orthonormal frame along $\gamma_t$. The derivative formulas for this frame are given by
\begin{equation}\label{TimelikeNSF}
	\begin{pmatrix}
		\gamma_t'(s)\\
	f_1'(s)\\
		f_2'(s)\\
        \mu'(s)
	\end{pmatrix}= \begin{pmatrix}
		0 & 0 & 0 & \alpha(s) 
        \\ 0 & 0 & \hat{\ell}(s) & 0
        \\ 0 & -\hat{\ell}(s) & 0 & \hat{n}(s)
        \\ -\alpha(s) &  0 &   \hat{n}(s) & 0
        
	\end{pmatrix}\begin{pmatrix}
	\gamma_t(s)\\
		f_1(s)\\
		f_2(s)\\
        \mu(s)
	\end{pmatrix},
\end{equation}
where 
\[ \hat{\ell}(s)=\ell(s)+\dfrac{m(s)n'(s)-n(s)m'(s)}{n^2(s)+m^2(s)},\quad \hat{n}(s)=\sqrt{n^2(s)+m^2(s)}.  \]
We finally define parallel curves of pseudo-spherical timelike framed curves. Let $(\gamma_t,v_1,v_2):I\to AdS^3\times \Delta_5$ be a pseudo-spherical timelike framed curve with the curvature $(\alpha,\ell,m,n)$. Consider the mapping $\gamma_t^\phi:I\to AdS^3$ defined by
\begin{equation*}\label{timeparallel}
    \gamma_t^\phi(s)=\cosh\phi\gamma_t(s)+\sinh\phi\left( \cos\theta(s) v_1(s)+\sin\theta(s)v_2(s) \right),
\end{equation*}
where $\phi$ is a fixed real number and $\theta'(s)=-\ell(s)$.  This mapping is called the anti-de Sitter parallel of $\gamma_t$. 
\begin{proposition}
    Let $(\gamma_t,v_1,v_2):I\to AdS^3\times \Delta_5$ be a pseudo-spherical timelike framed curve with the curvature $(\alpha,\ell,m,n)$. For a fixed real number $\phi$ and $\theta'(s)=-\ell(s)$, the anti-de Sitter parallel $(\gamma_t^\phi,v_1^\phi,v_2^\phi):I\to AdS^3\times \Delta_5$ is a pseudo-spherical timelike framed curve with the curvature $(\alpha^\phi,0,m^\phi, n^\phi)$, where
    \begin{align*}
        v_1^\phi(s)&= \sinh\phi\gamma_s(s)+\cosh\phi\left(\cos\theta(s) v_1(s)+\sin\theta(s)v_2(s)  \right),\\
        v_2^\phi(s)&=-\sin\theta(s) v_1(s)+\cos\theta(s)v_2(s),\\
        \alpha^\phi(s)&=\cosh\phi\,\alpha(s)+\sinh\phi(\cos\theta(s) m(s)+\sin\theta(s)n(s)),\\
        m^\phi(s)&= \sinh\phi\alpha(s)+\cosh\phi(\cos\theta(s) m(s)+\sin\theta(s)n(s)),\\
        n^\phi(s)&= -\sin\theta(s) m(s)+\cos\theta(s)n(s).
    \end{align*}
\end{proposition}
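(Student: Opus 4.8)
The plan is to mirror the proof of Proposition~\ref{spaceparallelprop}, replacing the hyperbolic rotation of the spacelike frame by the ordinary rotation appropriate to $\Delta_5$ and using the timelike Frenet--Serret formulas \eqref{TimelikeSF}. Throughout I would abbreviate $w(s)=\cos\theta(s)\,v_1(s)+\sin\theta(s)\,v_2(s)$, so that $\gamma_t^\phi=\cosh\phi\,\gamma_t+\sinh\phi\,w$ and $v_1^\phi=\sinh\phi\,\gamma_t+\cosh\phi\,w$, while $v_2^\phi=-\sin\theta\,v_1+\cos\theta\,v_2$.

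First I would check that $(\gamma_t^\phi,v_1^\phi,v_2^\phi)$ takes values in $AdS^3\times\Delta_5$. Since $\{\gamma_t,v_1,v_2,\mu\}$ is pseudo-orthonormal with $\langle\gamma_t,\gamma_t\rangle=-1$ and $\langle v_1,v_1\rangle=\langle v_2,v_2\rangle=1$, one has $\langle w,w\rangle=1$ and $\langle\gamma_t,w\rangle=0$. The identities $\cosh^2\phi-\sinh^2\phi=1$ and $\cos^2\theta+\sin^2\theta=1$ then give $\langle\gamma_t^\phi,\gamma_t^\phi\rangle=-1$, $\langle v_1^\phi,v_1^\phi\rangle=\langle v_2^\phi,v_2^\phi\rangle=1$, together with the orthogonality relations $\langle v_1^\phi,v_2^\phi\rangle=\langle\gamma_t^\phi,v_1^\phi\rangle=\langle\gamma_t^\phi,v_2^\phi\rangle=0$; hence $\gamma_t^\phi\in AdS^3$ and $(v_1^\phi,v_2^\phi)\in\Delta_5$.

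Next I would differentiate $\gamma_t^\phi$, $v_1^\phi$, and $v_2^\phi$ using \eqref{TimelikeSF}. The decisive point, exactly as in the spacelike case, is that the choice $\theta'(s)=-\ell(s)$ makes the coefficient of $v_1$ and of $v_2$ in each derivative a multiple of $\theta'+\ell$, and hence zero; every one of these derivatives is therefore a scalar multiple of $\mu$. Because $v_1^\phi$ and $v_2^\phi$ involve only $\gamma_t,v_1,v_2$ (no $\mu$-component) and the frame is pseudo-orthonormal, it follows at once that $\langle(\gamma_t^\phi)',v_1^\phi\rangle=\langle(\gamma_t^\phi)',v_2^\phi\rangle=0$, so $(\gamma_t^\phi,v_1^\phi,v_2^\phi)$ is a pseudo-spherical timelike framed curve, and similarly $\ell^\phi=\langle(v_1^\phi)',v_2^\phi\rangle=0$, accounting for the vanishing second curvature.

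Finally, to read off $\alpha^\phi$, $m^\phi$, $n^\phi$ I would verify $\mu^\phi=\mu$. The frame $(\gamma_t^\phi,v_1^\phi,v_2^\phi)$ is obtained from $(\gamma_t,v_1,v_2)$ by a coefficient matrix $M$ (the composition of a rotation by $\theta$ in the $(v_1,v_2)$-plane with a boost by $\phi$ in the resulting $(\gamma_t,w)$-plane) having no $\mu$-entries; by multilinearity and antisymmetry of the triple product, $\gamma_t^\phi\times v_1^\phi\times v_2^\phi=(\det M)\,\mu$, and a short computation gives $\det M=\cosh^2\phi-\sinh^2\phi=1$, so $\mu^\phi=\mu$. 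Since $\langle\mu,\mu\rangle=-1$, the definitions $\alpha=-\langle\gamma_t',\mu\rangle$, $m=-\langle v_1',\mu\rangle$, $n=-\langle v_2',\mu\rangle$ then identify $\alpha^\phi$, $m^\phi$, $n^\phi$ with the stated expressions as the (signed) $\mu$-coefficients of $(\gamma_t^\phi)'$, $(v_1^\phi)'$, $(v_2^\phi)'$. There is no genuine obstacle here: the argument is a bookkeeping exercise powered by the single cancellation $\theta'+\ell=0$, and the only step deserving a moment's attention is confirming that the frame transformation has determinant $+1$, so that $\mu$ is preserved rather than reversed.
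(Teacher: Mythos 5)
Your proposal is correct and follows essentially the same route as the paper: the paper's own proof simply defers to the spacelike case (Proposition \ref{spaceparallelprop}), whose argument you reproduce faithfully for $\Delta_5$ — checking the pseudo-orthonormality relations, using the cancellation $\theta'+\ell=0$ under \eqref{TimelikeSF} so that all derivatives are multiples of $\mu$, and reading off the curvatures after noting $\mu^\phi=\mu$. Your determinant argument for $\mu^\phi=\mu$ is a slightly more explicit justification than the paper's unelaborated claim, but it is the same proof in substance.
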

\begin{proof}
The proof of this proposition is quite similar to the proof of Proposition \ref{spaceparallelprop}.
\end{proof}
\section{Evolutes and focal surfaces of pseudo-spherical spacelike framed immersions in the anti-de Sitter 3-space}
We now define evolutes of pseudo-spherical spacelike framed immersions in $AdS^3$ and investigate properties of these evolutes. Throughout this section we assume $\alpha^2(s)+\hat{\epsilon}\hat{n}^2(s)\neq 0$ for all $s\in I$ unless otherwise stated.
\begin{definition}
    The \textit{total evolute} $\mathcal{E}(\gamma_s)$ of a pseudo-spherical spacelike framed immersion $(\gamma_s, f_1,f_2)$ is defined by
   \begin{equation}\label{evolutespace}
        \mathcal{E}(\gamma_s)(s)=\pm \dfrac{\hat{\ell}(s)\hat{n}(s)\,(\hat{n}(s)\gamma_s(s)-\alpha(s)f_2(s))-(\alpha(s)\hat{n}'(s)-\hat{n}(s)\alpha'(s))f_1(s)}{\sqrt{\big|\hat{\epsilon}(\alpha(s)\hat{n}'(s)-\hat{n}(s)\alpha'(s))^2-\hat{\ell}^2(s)\hat{n}^2(s)(\hat{n}^2(s)+\hat{\epsilon}\alpha^2(s))\big|}},
    \end{equation}
    where $g(s):=\hat{\epsilon}(\alpha(s)\hat{n}'(s)-\hat{n}(s)\alpha'(s))^2-\hat{\ell}^2(s)\hat{n}^2(s)(\hat{n}^2(s)+\hat{\epsilon}\alpha^2(s))\neq 0$.
    If $g(s)>0$, then $\mathcal{E}(\gamma_s)(s)\in S^3_2$. In this case we denote it by $ \mathcal{E}_p(\gamma_s)(s)$ and call it the \textit{PS-evolute} of $\gamma_s$. If $g(s)<0$, then $\mathcal{E}(\gamma_s)(s)\in AdS^3$. In this case we denote it by $ \mathcal{E}_a(\gamma_s)(s)$ and we call it the \textit{AdS-evolute} of $\gamma_s$. Note that if $g(s)>0$ and $\hat{\epsilon}=-1$, then we must assume that $\hat{n}^2(s)<\alpha^2(s)$ since otherwise $\mathcal{E}(\gamma_s)$ is not well-defined. 
\end{definition}
\begin{proposition}\label{propsevolparam}
Let $(\gamma_s,f_1,f_2):I\to AdS^3\times \Delta_1$ be a pseudo-spherical spacelike framed immersion with the curvature $(\hat{\alpha},\hat{\ell},\hat{m},\hat{n})$. Then the totally evolute $\mathcal{E}(\gamma_s)$ of $\gamma_s$ is independent of the parametrization of $(\gamma_s, f_1, f_2)$.
\end{proposition}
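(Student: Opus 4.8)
The plan is to show that a positive change of parameter merely reparametrizes the total evolute, leaving its image unchanged. Let $u:\tilde I\to I$ be a positive change of parameter in the sense of Proposition~\ref{propsparametrization}, and write $(\tilde\gamma_s,\tilde f_1,\tilde f_2)(s)=(\gamma_s,f_1,f_2)(u(s))$. My strategy is to prove that every ingredient of \eqref{evolutespace} is homogeneous in $u'(s)$ once evaluated along $u$: the numerator and the quantity $g(s)$ under the square root will both acquire explicit powers of $u'(s)$, and since $u'>0$ these powers cancel between numerator and denominator. The first step is to recall from Proposition~\ref{propsparametrization} that the base curvature transforms by $(\tilde\alpha,\tilde\ell,\tilde m,\tilde n)(s)=u'(s)\,(\alpha,\ell,m,n)(u(s))$.

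Next I would record how the adapted frame $\{\gamma_s,f_1,f_2,\mu\}$ and its invariants behave. Because $m$ and $n$ scale by the same factor $u'(s)$, the quotient defining $f_1,f_2$ is unaffected, so $\tilde f_i(s)=f_i(u(s))$, and therefore $\tilde\mu(s)=\mu(u(s))$ and $\hat{\tilde\epsilon}(s)=\hat\epsilon(u(s))$. Consequently $\tilde\alpha(s)=\langle\tilde\gamma_s'(s),\tilde\mu(s)\rangle=u'(s)\,\alpha(u(s))$, and from $\hat n=\hat\epsilon\epsilon\sqrt{|n^2-m^2|}$ together with $\tilde n^2-\tilde m^2=(u')^2(n^2-m^2)\circ u$ one obtains $\tilde{\hat n}(s)=u'(s)\,\hat n(u(s))$. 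The one genuinely computational point is $\hat\ell$: applying the chain rule to $\tilde m,\tilde n$ produces second-derivative terms in $u''$, but in the antisymmetric Wronskian combination $\tilde m\tilde n'-\tilde n\tilde m'=(u')^3(mn'-nm')\circ u$ these $u''$-terms cancel, and dividing by $\tilde n^2-\tilde m^2=(u')^2(n^2-m^2)\circ u$ yields $\tilde{\hat\ell}(s)=u'(s)\,\hat\ell(u(s))$.

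I would then differentiate. From $\tilde\alpha=u'\cdot(\alpha\circ u)$ and $\tilde{\hat n}=u'\cdot(\hat n\circ u)$ the chain rule again produces $u''$-terms in $\tilde\alpha'$ and $\tilde{\hat n}'$, but they cancel in the analogous Wronskian-type combination, giving $\tilde\alpha\,\tilde{\hat n}'-\tilde{\hat n}\,\tilde\alpha'=(u')^3(\alpha\hat n'-\hat n\alpha')\circ u$. Feeding these scalings into the numerator of \eqref{evolutespace} shows that each of its two terms scales by $(u')^3$, so the whole numerator equals $(u'(s))^3$ times the original numerator evaluated at $u(s)$. For the denominator, the same scalings give $\tilde g(s)=(u'(s))^6\,g(u(s))$, whence $\sqrt{|\tilde g(s)|}=(u'(s))^3\sqrt{|g(u(s))|}$ because $u'>0$. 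The common factor $(u'(s))^3$ cancels, and I conclude $\mathcal{E}(\tilde\gamma_s)(s)=\mathcal{E}(\gamma_s)(u(s))$, which is precisely the assertion that $\mathcal{E}(\gamma_s)$ is independent of the parametrization.

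The only real obstacle is the careful chain-rule bookkeeping; the structural fact that makes everything collapse is the cancellation of the second-derivative terms $u''$ in the two antisymmetric combinations $mn'-nm'$ and $\alpha\hat n'-\hat n\alpha'$. Once this cancellation is observed, the homogeneity of the numerator and of $g$ in $u'$ is immediate, and the proposition follows.
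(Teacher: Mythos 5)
Your proof is correct and follows essentially the same route as the paper's: both invoke Proposition~\ref{propsparametrization} to obtain the $u'$-scaling of the curvature, note that the chain-rule $u''$-terms cancel in the Wronskian so that $\tilde\alpha\,\tilde{\hat n}'-\tilde{\hat n}\,\tilde\alpha'=(u')^3\bigl(\alpha\hat n'-\hat n\alpha'\bigr)\circ u$, and then cancel the common factor $(u'(s))^3$ between the numerator of \eqref{evolutespace} and $\sqrt{|g|}$ (using $u'>0$) to conclude $\mathcal{E}(\tilde\gamma_s)(s)=\mathcal{E}(\gamma_s)(u(s))$. The only difference is that you additionally verify that the $(f_1,f_2)$-frame construction itself commutes with reparametrization (via the invariance of $f_i$, $\mu$, $\hat\epsilon$ and the scaling of $\hat\ell$, $\hat n$); this is a harmless consistency check that the paper bypasses by applying the curvature scaling directly to the framed immersion $(\gamma_s,f_1,f_2)$.
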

\begin{proof}
Let $(\Tilde{\gamma}_s, \Tilde{f}_1,\Tilde{f}_2)$ be a pseudo-spherical spacelike framed immersion with the curvature $(\hat{\alpha}_1,\hat{\ell}_1,\hat{m}_1,\hat{n}_1)$. Suppose that $(\gamma_s,f_1,f_2)$ and $(\Tilde{\gamma}_s,\Tilde{f}_1,\Tilde{f}_2)$ are parametrically equivalent by a change of parameter $u:\Tilde{I}\to I$. Similar to Proposition \ref{propsparametrization}, we have 
\begin{equation}\label{evparam1}
    \hat{\alpha}_1(s)=u'(s)\hat{\alpha}(u(s)),\:\: \hat{\ell}_1(s)=u'(s) \hat{\ell}(u(s)),\:\: \hat{m}_1(s)=u'(s)\hat{m}(u(s)),\:\:  \hat{n}_1(s)=u'(s)\hat{n}(u(s)).
\end{equation}
Moreover we find that
\begin{align*}
     \hat{\alpha}_1'(s)&=u''(s)\hat{\alpha}(u(s))+(u'(s))^2 \hat{\alpha}'(u(s)), \\
     \hat{n}_1'(s)&=u''(s)\hat{n}(u(s))+(u'(s))^2 \hat{n}'(u(s)),
\end{align*}
which from \eqref{evparam1} yields
\begin{equation}\label{evparam2}
  \hat{\alpha}_1(s)\hat{n}_1'(s)-\hat{n}_1(s)\hat{\alpha}_1'(s)=(u'(s))^3 \left(\hat{\alpha}'(u(s))\hat{n}(u(s))-\hat{\alpha}(u(s))\hat{n}'(u(s))\right).
\end{equation}
It follows easily from \eqref{evparam1} and \eqref{evparam2} that
\[ \mathcal{E}(\tilde{\gamma}_s)(s)=\mathcal{E}(\gamma_s)(u(s)). \]
Therefore the totally evolute of a pseudo-spherical spacelike framed immersion is independent of the parametrization.
\end{proof}
\begin{theorem}\label{evolatimeframed}
The AdS-evolute $\mathcal{E}_a(\gamma_s)$ of $\gamma_s$ is a pseudo-spherical framed base curve in $AdS^3$. More precisely, $(\mathcal{E}_a(\gamma_s),\mu,\eta)$ is a pseudo-spherical (spacelike or timelike) framed immersion with the curvature $(\alpha_{\mathcal{E}_a},\hat{\ell}_{\mathcal{E}_a},0,\hat{n}_{\mathcal{E}_a})$, where 
\begin{align*}
    \eta(s)&=\dfrac{\alpha(s) \gamma_s(s)+\hat{\epsilon}\hat{n}(s)f_2(s)}{\sqrt{|\alpha^2(s)+\hat{\epsilon}\hat{n}^2(s)|}}, \\
    \mu_{\mathcal{E}_a}(s)&=\dfrac{(\alpha(s)\hat{n}'(s)-\hat{n}(s)\alpha'(s))(\hat{n}(s)\gamma_s(s)-\alpha(s)f_2(s))-\hat{\ell}(s)\hat{n}(s)(\hat{\epsilon}\hat{n}^2(s)+\alpha^2(s))f_1(s)}{\sqrt{|\alpha^2(s)+\hat{\epsilon}\hat{n}^2(s)|}\sqrt{\hat{\ell}^2(s)\hat{n}^2(s)(\hat{n}^2(s)+\hat{\epsilon}\alpha^2(s))-\hat{\epsilon}(\alpha(s)\hat{n}'(s)-\hat{n}(s)\alpha'(s))^2}},\\
    \alpha_{\mathcal{E}_a}(s)&=\hat{\epsilon}\sqrt{|\alpha^2+\hat{\epsilon}n^2|}\dfrac{\hat{n}\big( 2\hat{\ell}\alpha'\hat{n}'+\hat{n}(\alpha'\hat{\ell}'-\hat{\ell}\alpha'') \big)-\alpha\big(-\hat{\ell}^3\hat{n}^2+\hat{n}\hat{\ell}'\hat{n}'+\hat{\ell}(2(\hat{n}')^2-\hat{n}\hat{n}'') \big)}{\hat{\ell}^2(s)\hat{n}^2(s)(\hat{n}^2(s)+\hat{\epsilon}\alpha^2(s))-\hat{\epsilon}(\alpha\hat{n}'(s)-\hat{n}(s)\alpha'(s))^2 }, \\
    \hat{\ell}_{\mathcal{E}_a}(s)&=\sqrt{|\alpha^2(s)+\hat{\epsilon}\hat{n}^2(s)|},\\
    \hat{n}_{\mathcal{E}_a}(s)&= -\hat{\epsilon}\dfrac{\sqrt{\hat{\ell}^2(s)\hat{n}^2(s)(\hat{n}^2(s)+\hat{\epsilon}\alpha^2(s))-\hat{\epsilon}(\alpha(s)\hat{n}'(s)-\hat{n}(s)\alpha'(s))^2}}{\alpha^2(s)+\hat{\epsilon}\hat{n}^2(s)}.
\end{align*}
\end{theorem}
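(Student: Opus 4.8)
The plan is to treat $(\mathcal{E}_a(\gamma_s),\mu,\eta)$ as a new framed curve and to verify all of its defining data directly from the derivative formulas \eqref{SpacelikeNSF}. Throughout I use that $\{\gamma_s,f_1,f_2,\mu\}$ is pseudo-orthonormal with $\langle\gamma_s,\gamma_s\rangle=-1$, $\langle f_1,f_1\rangle=\hat\epsilon$, $\langle f_2,f_2\rangle=-\hat\epsilon$ and $\langle\mu,\mu\rangle=1$, and I abbreviate $D:=\alpha\hat n'-\hat n\alpha'$, so that $g=\hat\epsilon D^2-\hat\ell^2\hat n^2(\hat n^2+\hat\epsilon\alpha^2)$.

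First I would set up the algebraic skeleton of the new frame before any differentiation. Writing $\mathcal{E}_a$, $\eta$, and the candidate $\mu_{\mathcal{E}_a}$ as combinations of $\{\gamma_s,f_1,f_2\}$, a short computation with the induced metric $\mathrm{diag}(-1,\hat\epsilon,-\hat\epsilon)$ on this span shows that $\mathcal{E}_a,\mu,\eta,\mu_{\mathcal{E}_a}$ are mutually pseudo-orthogonal, that $\langle\mathcal{E}_a,\mathcal{E}_a\rangle=g/|g|=-1$ (since $g<0$), that $\langle\eta,\eta\rangle=-\operatorname{sign}(\alpha^2+\hat\epsilon\hat n^2)$, and that $\langle\mu_{\mathcal{E}_a},\mu_{\mathcal{E}_a}\rangle=\operatorname{sign}(\alpha^2+\hat\epsilon\hat n^2)$. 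The last identity is the one nontrivial piece of algebra: it reduces to the polynomial identity $\hat\epsilon\hat\ell^2\hat n^2(\hat\epsilon\hat n^2+\alpha^2)^2-D^2(\hat n^2+\hat\epsilon\alpha^2)=(\alpha^2+\hat\epsilon\hat n^2)\bigl(\hat\ell^2\hat n^2(\hat n^2+\hat\epsilon\alpha^2)-\hat\epsilon D^2\bigr)$, checked by expanding both sides. At the same time I would confirm $\mu_{\mathcal{E}_a}=\mathcal{E}_a\times\mu\times\eta$, so that $\{\mathcal{E}_a,\mu,\eta,\mu_{\mathcal{E}_a}\}$ is genuinely a pseudo-orthonormal frame. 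The sign $\operatorname{sign}(\alpha^2+\hat\epsilon\hat n^2)$ decides whether $(\mu,\eta)\in\Delta_1$ (Case A, $\alpha^2+\hat\epsilon\hat n^2>0$, a spacelike framed immersion governed by \eqref{SpacelikeNSF}) or $(\mu,\eta)\in\Delta_5$ (Case B, $\alpha^2+\hat\epsilon\hat n^2<0$, a timelike framed immersion governed by \eqref{TimelikeNSF}), which explains the ``spacelike or timelike'' in the statement.

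Next I would extract the curvature by differentiating the three framing vectors. The cleanest identity comes for free: since $\mu'=\alpha\gamma_s+\hat\epsilon\hat n f_2=\sqrt{|\alpha^2+\hat\epsilon\hat n^2|}\,\eta$ by the very definition of $\eta$, we read off at once $\mu'=\hat\ell_{\mathcal{E}_a}\eta$ with $\hat\ell_{\mathcal{E}_a}=\sqrt{|\alpha^2+\hat\epsilon\hat n^2|}$, and, crucially, $\mu'$ has no $\mu_{\mathcal{E}_a}$-component, which is exactly the vanishing of the third curvature component. Differentiating $\eta=(\alpha\gamma_s+\hat\epsilon\hat n f_2)/\sqrt{|\alpha^2+\hat\epsilon\hat n^2|}$ with \eqref{SpacelikeNSF} and projecting onto $\mu$ and $\mu_{\mathcal{E}_a}$ expresses $\eta'$ as a combination of $\mu$ and $\mu_{\mathcal{E}_a}$ alone; the $\mu$-coefficient reproduces $\hat\ell_{\mathcal{E}_a}$ and the $\mu_{\mathcal{E}_a}$-coefficient gives $\hat n_{\mathcal{E}_a}$. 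Finally, differentiating the quotient \eqref{evolutespace} for $\mathcal{E}_a$ and simplifying with the Frenet relations, one checks that the components of $\mathcal{E}_a'$ along $\mathcal{E}_a$, $\mu$, $\eta$ all vanish (the vanishing along $\mu,\eta$ being the two framed conditions $\langle\mathcal{E}_a',\mu\rangle=\langle\mathcal{E}_a',\eta\rangle=0$), so that $\mathcal{E}_a'=\alpha_{\mathcal{E}_a}\mu_{\mathcal{E}_a}$ identifies $\alpha_{\mathcal{E}_a}$. Because $\hat\ell_{\mathcal{E}_a}=\sqrt{|\alpha^2+\hat\epsilon\hat n^2|}\neq0$ under the standing assumption $\alpha^2+\hat\epsilon\hat n^2\neq0$, the map $(\mathcal{E}_a,\mu,\eta)$ is an immersion.

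I expect the main obstacle to be the last step: differentiating the normalized quotient defining $\mathcal{E}_a$ and showing, after applying \eqref{SpacelikeNSF} and the product rule on both $\sqrt{|g|}$ and the numerator, that $\mathcal{E}_a'$ collapses onto the single direction $\mu_{\mathcal{E}_a}$ with coefficient exactly the stated $\alpha_{\mathcal{E}_a}$. This is a lengthy but mechanical simplification in $\alpha,\hat n,\hat\ell$ and their derivatives; the same polynomial identity used for the norm of $\mu_{\mathcal{E}_a}$, together with $g<0$, is what makes the denominators cancel. A secondary nuisance is the uniform sign bookkeeping across Case A and Case B, since the two cases invoke the different Frenet systems \eqref{SpacelikeNSF} and \eqref{TimelikeNSF}; keeping $\operatorname{sign}(\alpha^2+\hat\epsilon\hat n^2)$ as a single symbol until the end lets the two cases be treated together and matches the factor $-\hat\epsilon/(\alpha^2+\hat\epsilon\hat n^2)$ appearing in $\hat n_{\mathcal{E}_a}$.
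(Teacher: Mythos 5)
Your proposal is correct and follows essentially the same route as the paper's proof: direct verification that $\{\mathcal{E}_a(\gamma_s),\mu,\eta,\mu_{\mathcal{E}_a}\}$ is a pseudo-orthonormal frame, differentiation of $\mathcal{E}_a(\gamma_s)$ via \eqref{SpacelikeNSF} to check $\langle\mathcal{E}_a'(\gamma_s),\mu\rangle=\langle\mathcal{E}_a'(\gamma_s),\eta\rangle=0$ and to identify $\alpha_{\mathcal{E}_a}$ from $\mathcal{E}_a'(\gamma_s)=\alpha_{\mathcal{E}_a}\mu_{\mathcal{E}_a}$, and extraction of the remaining curvatures by projecting derivatives of the frame vectors. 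Your additional touches --- the observation that $\mu'=\sqrt{|\alpha^2+\hat{\epsilon}\hat{n}^2|}\,\eta$ instantly gives $\hat{\ell}_{\mathcal{E}_a}$ and the vanishing third curvature component, the explicit polynomial identity behind $\langle\mu_{\mathcal{E}_a},\mu_{\mathcal{E}_a}\rangle=\mathrm{sign}(\alpha^2+\hat{\epsilon}\hat{n}^2)$, and the remark that $\hat{\ell}_{\mathcal{E}_a}\neq 0$ guarantees the immersion property --- merely make explicit what the paper compresses into ``can be directly calculated,'' so the underlying argument is the same.
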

\begin{proof}
   We directly have $\langle \mathcal{E}_a(\gamma_s),\mu \rangle=0$ and $\langle \mathcal{E}_a(\gamma_s), \eta \rangle=0$ since $\{\gamma_s, f_1,f_2,\mu \}$ is a pseudo-orthonormal frame. By differentiating $\mathcal{E}_a(\gamma_s)$ we find that
\[ \mathcal{E}_a'(\gamma_s)(s)=\Lambda(s)\left((\alpha(s)\hat{n}'(s)-\hat{n}(s)\alpha'(s))(\hat{n}(s)\gamma_s(s)-\alpha(s)f_2(s))-\hat{\ell}(s)\hat{n}(s)(\hat{\epsilon}\hat{n}^2(s)+\alpha^2(s))f_1(s)\right), \]
where
\[ \Lambda(s)=\hat{\epsilon}\dfrac{\hat{n}\big( 2\hat{\ell}\alpha'\hat{n}'+\hat{n}(\alpha'\hat{\ell}'-\hat{\ell}\alpha'') \big)-\alpha\big(-\hat{\ell}^3\hat{n}^2+\hat{n}\hat{\ell}'\hat{n}'+\hat{\ell}(2(\hat{n}')^2-\hat{n}\hat{n}'') \big)}{\left(\hat{\ell}^2(s)\hat{n}^2(s)(\hat{n}^2(s)+\hat{\epsilon}\alpha^2(s))-\hat{\epsilon}(\alpha(s)\hat{n}'(s)-\hat{n}(s)\alpha'(s))^2\right)^{3/2}}. \]
Using this equation, we see that $\langle \mathcal{E}'_a(\gamma_s),\mu \rangle=0$ and 
\[  \langle \mathcal{E}'_a(\gamma_s),\eta \rangle=\Lambda(s)\big((\alpha(s)\hat{n}'(s)-\hat{n}(s)\alpha'(s))(-\hat{n}(s)\alpha(s)+\alpha(s)\hat{n}(s))\big)=0.\]
Note that $\mu$ is a spacelike vector, but $\eta$ can be spacelike or timelike. Hence  $(\mathcal{E}_a(\gamma_s),\mu,\eta)$ is a pseudo-spherical (spacelike or timelike) framed curve. We can easily calculate $\mu_{\mathcal{E}_a}=\mathcal{E}_a(\gamma_s)\times \mu\times\eta$. Or it is easy to show that $\mu_{\mathcal{E}_a}$ defined in this theorem is a unit vector and it is pseudo-orthogonal to $\mathcal{E}_a(\gamma_s)$, $\mu$, and $\eta$. Then we immediately get $\alpha_{\mathcal{E}_a}$ from the equality $\mathcal{E}_a'(\gamma_s)=\alpha_{\mathcal{E}_a}\mu_{\mathcal{E}_a}$. Similarly $\hat{\ell}_{\mathcal{E}_a}$ and $\hat{n}_{\mathcal{E}_a}$ can be directly calculated by using derivative formulas of the pseudo-orthonormal frame $\{\mathcal{E}_a(\gamma_s),\mu,\eta,\mu_{\mathcal{E}_a} \}$ along $\mathcal{E}_a(\gamma_s)$
\end{proof}
\begin{remark}
    Notice that if $\hat{\epsilon}=1$, i.e., $f_1$ is a spacelike vector, then $\eta$ is a timelike vector. In this case $(\mathcal{E}_a(\gamma_s),\mu,\eta)$ is a pseudo-spherical spacelike framed immersion. 
\end{remark}
\begin{remark}\label{remspotherevol}
    The PS-evolute $\mathcal{E}_p(\gamma_s)$ of $\gamma_s$ is also a framed immersion in $S_2^3$. We do not prove this fact here since our focus in this paper is on the pseudo-spherical framed curves in anti-de Sitter space.
\end{remark}
  \begin{proposition} \label{spacepropevsing}
    \begin{enumerate}
        \item[{\normalfont (i)}] If $\gamma_s$ has singularity at $s_0$, then 
        \[ \mathcal{E}(\gamma_s)(s_0)=\pm \dfrac{\hat{n}(s_0)\ell(s_0) \gamma_s(s_0)+\alpha'(s_0)f_1(s_0)}{\sqrt{|\hat{\epsilon}(\alpha'(s_0))^2-\hat{\ell}^2(s_0)\hat{n}^2(s_0)|}}. \]
       In this case $\mathcal{E}(\gamma_s)$ has also singularity at $s_0$ if and only if 
        \[ 2\hat{\ell}(s_0)\alpha'(s_0)\hat{n}'(s_0)+n(s_0)(\alpha'(s_0)\hat{\ell}'(s_0)-\hat{\ell}(s_0)\alpha''(s_0))=0.\]
        \item[{\normalfont (ii)}] If $f_1$ has singularity at $s_0$, then 
$\mathcal{E}(\gamma_s)(s_0)=\pm f_1(s_0).$
        In this case $\mathcal{E}(\gamma_s)$ has also singularity at $s_0$ if and only if $\hat{n}(s_0)\hat{\ell}'(s_0)=0$.
        \item[{\normalfont (iii)}] If $f_2$ has singularity at $s_0$, then 
$\mathcal{E}(\gamma_s)(s_0)=\pm f_1(s_0).$
        In this case $\mathcal{E}(\gamma_t)$ has also singularity at $s_0$.
    \end{enumerate}
\end{proposition}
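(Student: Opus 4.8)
The plan is to read each singularity condition off the Frenet equations \eqref{SpacelikeNSF} and then feed it into the explicit evolute formula \eqref{evolutespace}. From \eqref{SpacelikeNSF} we have $\gamma_s'=\alpha\mu$, $f_1'=\hat{\ell}f_2$, and $f_2'=\hat{\ell}f_1+\hat{n}\mu$, and since $\{\gamma_s,f_1,f_2,\mu\}$ is a pseudo-orthonormal frame, this shows that $\gamma_s$ is singular at $s_0$ iff $\alpha(s_0)=0$, that $f_1$ is singular at $s_0$ iff $\hat{\ell}(s_0)=0$, and that $f_2$ is singular at $s_0$ iff $\hat{\ell}(s_0)=\hat{n}(s_0)=0$. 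For the location of the evolute point in each case I would simply substitute these conditions into \eqref{evolutespace}. For the singularity of the evolute itself I would use that $\mathcal{E}(\gamma_s)$ is a unit vector field whose derivative, by the computation in the proof of Theorem \ref{evolatimeframed}, is a scalar multiple of a fixed nonvanishing direction (proportional to $\mu_{\mathcal{E}_a}$); hence $\mathcal{E}(\gamma_s)$ is singular at $s_0$ exactly when the numerator
\[ D(s)=\hat{n}\big( 2\hat{\ell}\alpha'\hat{n}'+\hat{n}(\alpha'\hat{\ell}'-\hat{\ell}\alpha'') \big)-\alpha\big(-\hat{\ell}^3\hat{n}^2+\hat{n}\hat{\ell}'\hat{n}'+\hat{\ell}(2(\hat{n}')^2-\hat{n}\hat{n}'') \big) \]
of $\alpha_{\mathcal{E}_a}$ (equivalently of $\Lambda$) vanishes at $s_0$.

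For (i), set $\alpha(s_0)=0$ in \eqref{evolutespace}. The term $\alpha(s)\hat{n}'(s)-\hat{n}(s)\alpha'(s)$ collapses to $-\hat{n}(s_0)\alpha'(s_0)$, the numerator becomes $\hat{n}(s_0)\big(\hat{\ell}(s_0)\hat{n}(s_0)\gamma_s(s_0)+\alpha'(s_0)f_1(s_0)\big)$, and the radicand factors as $\hat{n}^2(s_0)\big(\hat{\epsilon}(\alpha'(s_0))^2-\hat{\ell}^2(s_0)\hat{n}^2(s_0)\big)$. The nondegeneracy hypothesis $g(s_0)\neq0$ forces $\hat{n}(s_0)\neq0$, so I can cancel $\hat{n}(s_0)$ against $|\hat{n}(s_0)|$, absorbing the sign into the $\pm$, to obtain the stated point. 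Substituting $\alpha(s_0)=0$ into $D$ gives $D(s_0)=\hat{n}(s_0)\big(2\hat{\ell}(s_0)\alpha'(s_0)\hat{n}'(s_0)+\hat{n}(s_0)(\alpha'(s_0)\hat{\ell}'(s_0)-\hat{\ell}(s_0)\alpha''(s_0))\big)$, and dividing by $\hat{n}(s_0)\neq0$ yields exactly the claimed singularity criterion.

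Cases (ii) and (iii) proceed the same way. For (ii), putting $\hat{\ell}(s_0)=0$ into \eqref{evolutespace} kills the $\gamma_s$- and $f_2$-terms, leaving a multiple of $(\alpha(s_0)\hat{n}'(s_0)-\hat{n}(s_0)\alpha'(s_0))f_1(s_0)$ over $|\alpha(s_0)\hat{n}'(s_0)-\hat{n}(s_0)\alpha'(s_0)|$; since $g(s_0)\neq0$ with $\hat{\ell}(s_0)=0$ forces $\alpha(s_0)\hat{n}'(s_0)-\hat{n}(s_0)\alpha'(s_0)\neq0$, this reduces to $\pm f_1(s_0)$. Evaluating $D$ at $\hat{\ell}(s_0)=0$ gives $D(s_0)=\hat{n}(s_0)\hat{\ell}'(s_0)\big(\hat{n}(s_0)\alpha'(s_0)-\alpha(s_0)\hat{n}'(s_0)\big)$; the last factor is nonzero by the same nondegeneracy, so $D(s_0)=0$ iff $\hat{n}(s_0)\hat{\ell}'(s_0)=0$. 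For (iii), setting $\hat{\ell}(s_0)=\hat{n}(s_0)=0$ again reduces \eqref{evolutespace} to $\pm f_1(s_0)$ (now $g(s_0)\neq0$ forces $\alpha(s_0)\hat{n}'(s_0)\neq0$), and every monomial of $D(s_0)$ carries a factor of $\hat{\ell}(s_0)$ or $\hat{n}(s_0)$, so $D(s_0)=0$ automatically; the evolute is therefore always singular at such a point.

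The main obstacle I anticipate is not the substitutions, which are routine, but justifying the singular-point criterion uniformly for the total evolute: the derivative formula $\mathcal{E}_a'=\Lambda\cdot(\text{nonzero direction})$ is recorded in Theorem \ref{evolatimeframed} only for the AdS-evolute (where $g<0$), whereas \eqref{evolutespace} also covers the PS-case ($g>0$). I would address this by differentiating $\pm N/\sqrt{|g|}$ directly, noting that the $|g|$-factor only changes the overall sign and contributes terms proportional to $N$ itself, so that the tangential direction and hence the vanishing locus of the scalar coefficient is governed by the same numerator $D(s)$ regardless of the sign of $g$. The other point requiring care is the legitimate cancellation of the common factors $\hat{n}(s_0)$ and $\alpha(s_0)\hat{n}'(s_0)-\hat{n}(s_0)\alpha'(s_0)$, each of which must be shown nonzero from $g(s_0)\neq0$ before dividing.
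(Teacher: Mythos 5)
Your proposal is correct and follows essentially the same route as the paper: the paper's proof likewise differentiates the total evolute directly, obtaining $\mathcal{E}'(\gamma_s)=\Lambda(s)\cdot(\text{nonvanishing vector})$ with a sign factor $\epsilon_{\mathcal{E}}=\langle\mathcal{E}(\gamma_s),\mathcal{E}(\gamma_s)\rangle$ in $\Lambda$ precisely so that the formula covers both the AdS-case and the PS-case (the obstacle you anticipated), and then substitutes $\alpha(s_0)=0$, $\hat{\ell}(s_0)=0$, and $\hat{\ell}(s_0)=\hat{n}(s_0)=0$ respectively into $\mathcal{E}$ and $\mathcal{E}'$. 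Your write-up merely spells out the cancellations and nondegeneracy checks that the paper leaves implicit.
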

\begin{proof}
    By a direct calculation we obtain
\[ \mathcal{E}'(\gamma_s)(s)=\Lambda(s)\left((\alpha(s)\hat{n}'(s)-\hat{n}(s)\alpha'(s))(\hat{n}(s)\gamma_s(s)-\alpha(s)f_2(s))-\hat{\ell}(s)\hat{n}(s)(\hat{\epsilon}\hat{n}^2(s)+\alpha^2(s))f_1(s)\right), \]
where
\[ \Lambda(s)=-\epsilon_{\mathcal{E}}\hat{\epsilon}\dfrac{\hat{n}\big( 2\hat{\ell}\alpha'\hat{n}'+\hat{n}(\alpha'\hat{\ell}'-\hat{\ell}\alpha'') \big)-\alpha\big(-\hat{\ell}^3\hat{n}^2+\hat{n}\hat{\ell}'\hat{n}'+\hat{\ell}(2(\hat{n}')^2-\hat{n}\hat{n}'') \big)}{\left(\epsilon_{\mathcal{E}}\left(\hat{\ell}^2\hat{n}^2(\hat{n}^2+\hat{\epsilon}\alpha^2)-\hat{\epsilon}(\alpha\hat{n}'-\hat{n}\alpha')^2 \right)\right)^{3/2}}, \]
where $\epsilon_{\mathcal{E}}=\langle \mathcal{E}(\gamma_s),\mathcal{E}(\gamma_s)\rangle$.
If $\gamma_s$ has singularity at $s_0$, then $\alpha(s_0)=0$. Introducing this into $\mathcal{E}(\gamma_s)(s)$ and $\mathcal{E}'(\gamma_s)(s)$ gives (i). 

If $f_1$ has singularity at $s_0$, then $\hat{\ell}(s_0)=0$. Hence substituting $\hat{\ell}(s_0)=0$ into $\mathcal{E}(\gamma_s)(s)$ and $\mathcal{E}'(\gamma_s)(s)$ directly yields (ii). 

From \eqref{SpacelikeNSF}, $f_2$ has singularity at $s_0$ if and only if $\hat{\ell}(s_0)=\hat{n}(s_0)=0$. So (iii) is a direct consequence of (ii).
\end{proof}
The following proposition gives the relationship between the evolutes of a given pseudo-spherical spacelike framed immersion and its parallel. We will not give the proof since it follows from a messy but straightforward calculation.
\begin{proposition}
    Let $(\gamma_s,v_1,v_2)$ be a pseudo-spherical spacelike framed immersion, and for a fixed real number $\phi$, let $(\gamma_s^\phi,v_1^\phi,v_2^\phi)$ be the parallel of $\gamma_s$. Then $\mathcal{E}(\gamma_s^\phi)(s)=\mathcal{E}(\gamma_s)(s)$. 
\end{proposition}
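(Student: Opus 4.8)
The plan is to reduce the claim to a statement about \emph{parallel vectors}. Writing the total evolute \eqref{evolutespace} as $\mathcal{E}(\gamma_s)=\pm N(s)/\sqrt{|\langle N,N\rangle|}$ with
\[ N(s)=\hat{\ell}(s)\hat{n}(s)\big(\hat{n}(s)\gamma_s(s)-\alpha(s)f_2(s)\big)-\big(\alpha(s)\hat{n}'(s)-\hat{n}(s)\alpha'(s)\big)f_1(s), \]
a direct check using $\langle\gamma_s,\gamma_s\rangle=-1$, $\langle f_1,f_1\rangle=\hat{\epsilon}$, and $\langle f_2,f_2\rangle=-\hat{\epsilon}$ shows that $\langle N,N\rangle=g$, so $\mathcal{E}(\gamma_s)$ is nothing but the normalization of the numerator vector $N$. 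Hence, because of the intrinsic sign ambiguity $\pm$ built into the definition, it suffices to prove that the corresponding numerator $N^\phi$ of the parallel is a nonzero scalar multiple of $N$; parallelism together with the normalization then forces $\mathcal{E}(\gamma_s^\phi)(s)=\mathcal{E}(\gamma_s)(s)$.

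First I would assemble the $f$-adapted data of the parallel. Proposition \ref{spaceparallelprop} already supplies the frame $(\gamma_s^\phi,v_1^\phi,v_2^\phi)$, the equality $\mu^\phi=\mu$, and the curvature $(\alpha^\phi,0,m^\phi,n^\phi)$. Since the second entry vanishes, feeding $(m^\phi,n^\phi)$ into the normalization $f_1^\phi=(n^\phi v_1^\phi-m^\phi v_2^\phi)/\sqrt{|(n^\phi)^2-(m^\phi)^2|}$, $f_2^\phi=(-m^\phi v_1^\phi+n^\phi v_2^\phi)/\sqrt{|(n^\phi)^2-(m^\phi)^2|}$ together with the conversion formulas accompanying \eqref{SpacelikeNSF} produces $\hat{\ell}^\phi$, $\hat{n}^\phi$ and expresses $f_1^\phi,f_2^\phi$ as explicit combinations of $\gamma_s,v_1,v_2$, hence of $\gamma_s,f_1,f_2$. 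Substituting these, along with the derivatives $(\alpha^\phi)'$ and $(\hat{n}^\phi)'$, into $N^\phi$ turns the desired relation $N^\phi\parallel N$ into an identity among the curvatures that I would verify componentwise in the basis $\{\gamma_s,f_1,f_2\}$.

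Two observations streamline the computation. Writing $\kappa:=\epsilon\big(P^2(\theta)-R^2(\theta)\big)\in\{\pm1\}$, the functions $f(\phi),g(\phi)$ of \eqref{spaceparallel} satisfy $f^2-\kappa g^2=1$ in both branches of the definition, which collapses the mixed $\cosh/\sinh$ (respectively $\cos/\sin$) terms; and every $s$-derivative is reduced through $\theta'=-\ell$ and the structure equations, so that, exactly as in Proposition \ref{propsparametrization}, no parametrization-dependent factor survives in the end. I expect the main obstacle to be precisely this final algebraic reduction: one must carry the derivative combination $\alpha^\phi(\hat{n}^\phi)'-\hat{n}^\phi(\alpha^\phi)'$ and the product $\hat{\ell}^\phi\hat{n}^\phi$ through the substitution and check that the $\gamma_s$-, $f_1$-, and $f_2$-components of $N^\phi$ are a common scalar times those of $N$, all while tracking the sign cases coming from $\hat{\epsilon}=\pm1$ and the two admissible choices of $(P,R)$. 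Since the normalizing radical and the $\pm$ absorb both this common scalar and any residual sign, the asserted equality $\mathcal{E}(\gamma_s^\phi)(s)=\mathcal{E}(\gamma_s)(s)$ follows.
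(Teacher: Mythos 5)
Your preliminary reductions are all correct: with $N$ the numerator vector of \eqref{evolutespace}, the identity $\langle N,N\rangle=g$ does follow from $\langle\gamma_s,\gamma_s\rangle=-1$, $\langle f_1,f_1\rangle=\hat{\epsilon}$, $\langle f_2,f_2\rangle=-\hat{\epsilon}$ and pairwise orthogonality, so $\mathcal{E}(\gamma_s)$ is the pseudo-normalization of $N$, and the $\pm$ in the definition does reduce the proposition to showing that the numerator $N^\phi$ built from $(\alpha^\phi,\hat{\ell}^\phi,\hat{n}^\phi,f_1^\phi,f_2^\phi)$ is a nonzero scalar multiple of $N$. For calibration: the paper gives no proof of this proposition at all (it is dismissed as ``a messy but straightforward calculation''), so your plan is exactly the computation the paper declines to perform. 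The problem is that your proposal declines to perform it as well. Everything after the reduction is conditional --- ``I would assemble\dots'', ``I would verify componentwise\dots'', ``I expect the main obstacle to be precisely this final algebraic reduction'' --- and the proportionality $N^\phi\parallel N$, which \emph{is} the mathematical content of the statement, is never established; nor do you give any structural reason why the three components in the basis $\{\gamma_s,f_1,f_2\}$ must come out with a common factor. As written, this is a correct strategy with the decisive step missing: a genuine gap.

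The gap can be closed without the messy substitution, using facts you already invoke. The proof of Proposition \ref{spaceparallelprop} records $\mu^\phi(s)=\mu(s)$, and since passing from $(v_1^\phi,v_2^\phi)$ to $(f_1^\phi,f_2^\phi)$ only rotates or reflects vectors inside the plane they span, one gets $\gamma_s^\phi\times f_1^\phi\times f_2^\phi=\pm\mu(s)$. Now observe that $N$ is pseudo-orthogonal to $\mu$, $\mu'$ and $\mu''$ (a two-line check using \eqref{SpacelikeNSF}), so up to normalization $\mathcal{E}(\gamma_s)(s_0)$ spans the line $\{\mu(s_0),\mu'(s_0),\mu''(s_0)\}^{\perp}$; equivalently, by the PS-height function proposition of Section 4 (which is proved independently of the parallel proposition, so there is no circularity), $\mathcal{E}_a(\gamma_s)(s_0)$, resp.\ $\mathcal{E}_p(\gamma_s)(s_0)$, is the antipodal pair of solutions $\mathbf{v}_0\in AdS^3$, resp.\ $\mathbf{v}_0\in S^3_2$, of $\langle\mu(s_0),\mathbf{v}_0\rangle=\langle\mu'(s_0),\mathbf{v}_0\rangle=\langle\mu''(s_0),\mathbf{v}_0\rangle=0$. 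These conditions depend on nothing but $\mu$ and its derivatives, which are unchanged (up to sign) when $\gamma_s$ is replaced by $\gamma_s^\phi$; hence $\mathcal{E}(\gamma_s^\phi)(s)=\mathcal{E}(\gamma_s)(s)$, and moreover the parallel's evolute automatically lands in the same pseudo-sphere, since the sign of $g$ is the causal character of that common perpendicular line. If you prefer to keep the computational route, the substitution must actually be carried through; the identity does hold, but stating that you expect it to is not a proof.
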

\subsection{Focal surfaces of pseudo-spherical spacelike framed immersions}\label{secspacefocal}
The set of singular values of the focal surface of a curve gives the evolute of the same curve. This fact has been proved for many types of curves in different spaces \cite{hayashi, honda2, honda3,peinew}. Our aim in this section is to show that this important relationship between evolutes and focal surfaces also holds for pseudo-spherical spacelike framed immersions. We also give relationships between singularities of the evolute and of the focal surface.

Let $(\gamma_s, f_1,f_2)$ be a pseudo-spherical spacelike framed immersion with the curvature $(\alpha, \hat{l},0,\hat{n})$ in $AdS^3$.  Define 
\[ \zeta(s)=\dfrac{\hat{n}(s)\gamma_s(s)-\alpha(s)f_2(s)}{\sqrt{\epsilon_\zeta (-\hat{n}^2(s)-\hat{\epsilon}\alpha^2(s))} }, \]
where $\epsilon_\zeta=\langle \zeta,\zeta\rangle=\text{sgn}(-\hat{n}^2(s)-\hat{\epsilon}\alpha^2(s))$. Note that for $\eta$ defined in Theorem \ref{evolatimeframed}, $\langle \zeta,\eta\rangle=0$ and $\epsilon_\zeta \hat{\epsilon}=\epsilon_\eta=\langle \eta,\eta\rangle$. So we consider the following cases depending upon $\epsilon_\zeta$ and $\hat{\epsilon}$. 
\subsubsection*{Case 1.}
Let $\epsilon_\zeta\hat{\epsilon}=1$. Then $\zeta$ and $f_1$ are both timelike vectors. In this case we define the focal surface of $\gamma_s$ by
\begin{equation}\label{spacefscase1}
    \mathcal{F}_1(s,\theta)=\cos\theta\,\zeta(s)+\sin\theta f_1(s).
\end{equation}
Note that $\mathcal{F}_1(s,\theta)\in AdS^3$.  
Similar to the case with evolutes, it is easy to show that this focal surface of a pseudo-spherical spacelike immersion is independent of the choice of parametrization. Let us calculate  the partial derivatives of $\mathcal{F}_1(s,\theta)$. Differentiating \eqref{spacefscase1} with respect to $s$ and using \eqref{SpacelikeNSF}, we find that
\begin{align}
    \dfrac{\partial}{\partial s}\mathcal{F}_1(s,\theta)=&\dfrac{-\cos\theta}{\hat{n}^2(s)-\alpha^2(s)} \bigg( \dfrac{ \alpha(s)\hat{n}'(s)-\hat{n}(s)\alpha'(s)}{\sqrt{
    \hat{n}^2(s)-\alpha^2(s)}}(\alpha(s)\gamma(s)-\hat{n}(s)f_2(s)) \nonumber \\ & +\alpha(s)\hat{\ell}(s)\sqrt{\hat{n}^2(s)-\alpha^2(s)}f_1(s)  \bigg)  +\sin\theta\hat{\ell}(s) f_2(s). \label{spacefscase1-2}
\end{align}
Differentiating \eqref{spacefscase1} with respect to $\theta$ yields
\begin{equation}
    \label{spacefscase1-3} 
    \dfrac{\partial}{\partial\theta}\mathcal{F}_1(s,\theta)=-\sin\theta\,\zeta(s)+\cos\theta f_1(s).
\end{equation}
The focal surface of $\gamma_s$ defined by \eqref{spacefscase1} has singularity at $(s_0,\theta_0)$ if and only if $\mathcal{F}_1\times \frac{\partial}{\partial s}\mathcal{F}_1\times \frac{\partial}{\partial \theta}\mathcal{F}_1(s_0,\theta_0)=0$. Then from \eqref{spacefscase1}, \eqref{spacefscase1-2}, and \eqref{spacefscase1-3}, this triple vector product is equal to 
\[ -\dfrac{1}{\sqrt{\hat{n}^2(s)-\alpha^2(s)}}\left(\dfrac{\cos\theta(\alpha(s)\hat{n}'(s)-\hat{n}(s)\alpha'(s))}{\sqrt{\hat{n}^2(s)-\alpha^2(s)}}+\sin\theta \hat{\ell}(s)\hat{n}(s)\right)\mu(s), \]
which becomes zero at $(s_0,\theta_0)$ if and only if 
\begin{equation}\label{spacefscase1-4}
    \cos\theta_0(\alpha(s_0)\hat{n}'(s_0)-\hat{n}(s_0)\alpha'(s_0))+\sin\theta_0 \hat{\ell}(s_0)\hat{n}(s_0)\sqrt{\hat{n}^2(s_0)-\alpha^2(s_0)}=0.
\end{equation}
Substituting this into \eqref{spacefscase1} gives
\begin{align*}
    \mathcal{F}_1(s_0,\theta_0)&=\pm \dfrac{\hat{\ell}(s_0)\hat{n}(s_0)\,(\hat{n}(s_0)\gamma_s(s_0)-\alpha(s_0)f_2(s_0))-(\alpha(s_0)\hat{n}'(s_0)-\hat{n}(s_0)\alpha'(s_0))f_1(s_0)}{\sqrt{(\alpha(s_0)\hat{n}'(s_0)-\hat{n}(s_0)\alpha'(s_0))^2+\hat{\ell}^2(s_0)\hat{n}^2(s_0)(\hat{n}^2(s_0)-\alpha^2(s_0))}} \\
    &=\mathcal{E}_a(\gamma_s)(s_0).
\end{align*}
So the set of singular points of $\mathcal{F}_1(s,\theta)$ coincides with  the AdS-evolute $\mathcal{E}_a(\gamma_s)(s)$.
\begin{theorem}\label{spfocalsin}Suppose that the focal surface $\mathcal{F}_1(\gamma_s)(s,\theta)$ has singularity at $(s_0,\theta_0)$. Then 
    \begin{enumerate} 
        \item[{\normalfont (i)}] $\mathcal{F}_1(\gamma_s)(s,\theta)$ is locally diffeomorphic to the cuspidal edge at $(s_0,\theta_0)$ if and only if $\alpha_{\mathcal{E}_a}(s_0)\neq 0$, i.e., the AdS-evolute $\mathcal{E}_a(\gamma_s)(s)$ is regular at $s_0$.
        \item[{\normalfont (ii)}] $\mathcal{F}_1(\gamma_s)(s,\theta)$ is locally diffeomorphic to the swallowtail at $(s_0,\theta_0)$ if and only if $\alpha_{\mathcal{E}_a}(s_0)= 0$ and $\alpha'_{\mathcal{E}_a}(s_0)\neq 0$. 
    \end{enumerate}
\end{theorem}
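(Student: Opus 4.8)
The plan is to apply the criteria of Saji--Umehara--Yamada for cuspidal edges and swallowtails of wave fronts, after first checking that $\mathcal{F}_1$ is a front. The triple vector product computed just before \eqref{spacefscase1-4} is a multiple of $\mu(s)$, and from \eqref{spacefscase1-2}--\eqref{spacefscase1-3} one sees that both $\partial_s\mathcal{F}_1$ and $\partial_\theta\mathcal{F}_1$ are pseudo-orthogonal to $\mu$, while $\partial_\theta\mathcal{F}_1=-\sin\theta\,\zeta+\cos\theta\,f_1$ is a nowhere-vanishing unit vector. Hence $\mu$ serves as a unit (spacelike) normal and $(\mathcal{F}_1,\mu)$ is a Legendrian immersion, so $\mathcal{F}_1$ is a front. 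Reading off the coefficient of $\mu$ and discarding the nowhere-zero factor $-1/(\hat n^2-\alpha^2)$, I take as identifier (signed area) function
\[
\lambda(s,\theta)=\cos\theta\,(\alpha(s)\hat n'(s)-\hat n(s)\alpha'(s))+\sin\theta\,\hat\ell(s)\hat n(s)\sqrt{\hat n^2(s)-\alpha^2(s)},
\]
whose zero set is exactly the singular set $S(\mathcal{F}_1)$ by \eqref{spacefscase1-4}.

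Next I would exhibit the null vector field. Since $\partial_\theta\mathcal{F}_1$ never vanishes, at a singular point the rank of $d\mathcal{F}_1$ drops only because $\partial_s\mathcal{F}_1$ becomes proportional to $\partial_\theta\mathcal{F}_1$; solving $d\mathcal{F}_1(\eta)=0$ from \eqref{spacefscase1-2}--\eqref{spacefscase1-3} produces a smooth null field $\eta=\partial_s+c(s,\theta)\,\partial_\theta$. Under the standing hypotheses ($g(s)\neq0$ and, in this case, $\hat n^2>\alpha^2$) one verifies $d\lambda\neq0$ along $S(\mathcal{F}_1)$, so each singular point is non-degenerate and $S(\mathcal{F}_1)$ is locally a regular graph $\theta=\vartheta(s)$ solving \eqref{spacefscase1-4}. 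The criteria then state: $\mathcal{F}_1$ is a cuspidal edge at $(s_0,\theta_0)$ iff $\eta\lambda(s_0,\theta_0)\neq0$, and a swallowtail iff $\eta\lambda(s_0,\theta_0)=0$ and $\eta\eta\lambda(s_0,\theta_0)\neq0$.

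Rather than expand $\eta\lambda$ by hand, I would translate it geometrically using the identity established in this subsection, $\mathcal{F}_1(s,\vartheta(s))=\mathcal{E}_a(\gamma_s)(s)$. Differentiating gives $d\mathcal{F}_1(\partial_s+\vartheta'(s)\partial_\theta)=\mathcal{E}_a'(\gamma_s)(s)=\alpha_{\mathcal{E}_a}(s)\,\mu_{\mathcal{E}_a}(s)$ by Theorem \ref{evolatimeframed}. At a non-degenerate singular point $\ker d\mathcal{F}_1=\mathrm{span}\,\eta$ and $\ker d\lambda=\mathrm{span}(\partial_s+\vartheta'\partial_\theta)$, so $\eta\lambda=0$ iff $\eta$ is tangent to $S(\mathcal{F}_1)$ iff $d\mathcal{F}_1(\partial_s+\vartheta'\partial_\theta)=0$ iff $\alpha_{\mathcal{E}_a}(s_0)=0$. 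This already yields part (i): $\mathcal{F}_1$ is a cuspidal edge at $(s_0,\theta_0)$ exactly when $\alpha_{\mathcal{E}_a}(s_0)\neq0$, i.e.\ when the AdS-evolute $\mathcal{E}_a(\gamma_s)$ is regular at $s_0$, and it supplies the first half $\eta\lambda=0\Leftrightarrow\alpha_{\mathcal{E}_a}(s_0)=0$ of part (ii).

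It remains to match the swallowtail non-degeneracy $\eta\eta\lambda(s_0,\theta_0)\neq0$ with $\alpha'_{\mathcal{E}_a}(s_0)\neq0$, and this is where I expect the real work to lie. The clean route uses the general fact that a front is a swallowtail precisely where its set of singular values has an ordinary $3/2$-cusp: along $\mathcal{E}_a(\gamma_s)$ we have $\mathcal{E}_a'(\gamma_s)=\alpha_{\mathcal{E}_a}\mu_{\mathcal{E}_a}$, so when $\alpha_{\mathcal{E}_a}(s_0)=0$ the second derivative is $\mathcal{E}_a''(\gamma_s)(s_0)=\alpha'_{\mathcal{E}_a}(s_0)\,\mu_{\mathcal{E}_a}(s_0)$, which is nonzero iff $\alpha'_{\mathcal{E}_a}(s_0)\neq0$; thus the evolute has a cusp of the required type exactly under the stated condition, equivalently $\eta\eta\lambda(s_0,\theta_0)\neq0$. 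The alternative, fully computational route is to differentiate $\lambda$ twice along $\eta$ and verify directly that $\eta\eta\lambda$ is a nonvanishing multiple of $\alpha'_{\mathcal{E}_a}$ at $s_0$; the main obstacle is controlling this lengthy second-order expression, for which the curvature formulas of Theorem \ref{evolatimeframed} and the relation \eqref{spacefscase1-4} defining $\vartheta$ are the essential inputs.
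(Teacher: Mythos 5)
Your proposal stays inside the same framework as the paper's proof (the cuspidal-edge/swallowtail criteria of \cite{Izumiya-Circular,kokubu}, applied to the signed density $\lambda$ and a null vector field), but you perform the identification with $\alpha_{\mathcal{E}_a}$ by a genuinely different, more geometric argument, and for part (i) your route is valid and arguably cleaner than the paper's. The paper writes down the null field explicitly, $\xi(s)=\bigl(1,-\alpha(s)\hat{\ell}(s)/\sqrt{\hat{n}^2(s)-\alpha^2(s)}\bigr)$, solves \eqref{spacefscase1-4} as $\theta(s)=\arctan(\cdot)$, and verifies by direct computation that $\det(c'(s),\xi(s))=\alpha_{\mathcal{E}_a}(s)$ identically along the singular curve; you instead note that $\eta\lambda=0$ iff $\eta$ spans the tangent line of $\mathcal{S}(\mathcal{F}_1)$, iff $d\mathcal{F}_1(\partial_s+\vartheta'\partial_\theta)=0$, and then use the chain-rule identity $d\mathcal{F}_1(\partial_s+\vartheta'\partial_\theta)=\mathcal{E}_a'(\gamma_s)=\alpha_{\mathcal{E}_a}\mu_{\mathcal{E}_a}$ with $\mu_{\mathcal{E}_a}$ a unit vector. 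This settles (i) with essentially no computation. (One small hole: pseudo-orthogonality of $\partial_s\mathcal{F}_1,\partial_\theta\mathcal{F}_1$ to $\mu$ only makes $\mathcal{F}_1$ a frontal; to get a front you must also check $d\mu\neq0$ on $\ker d\mathcal{F}_1$, which holds here because $\partial_s\mu=\alpha\gamma_s-\hat{n}f_2\neq0$ when $\hat{n}^2>\alpha^2$. The paper is silent on this too.)

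The genuine gap is in part (ii). What must be shown is $\eta\eta\lambda(s_0,\theta_0)\neq0\Leftrightarrow\alpha'_{\mathcal{E}_a}(s_0)\neq0$, and your ``clean route'' replaces this by the assertion that a front is a swallowtail exactly where the image of its singular curve has an ordinary $3/2$-cusp. As stated this does not close the argument, for two reasons. First, an ordinary cusp of a space curve requires both $\hat\gamma''(s_0)\neq0$ and linear independence of $\hat\gamma''(s_0)$ and $\hat\gamma'''(s_0)$, whereas your computation $\mathcal{E}_a''(s_0)=\alpha'_{\mathcal{E}_a}(s_0)\mu_{\mathcal{E}_a}(s_0)$ controls only the second derivative; what you actually need is the sharper statement that, at a non-degenerate singular point of a front, swallowtail $\Leftrightarrow$ $\hat\gamma'(s_0)=0$ and $\hat\gamma''(s_0)\neq0$. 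Second, that sharper statement is equivalent to the very criterion you are applying, so invoking it as a citation-free ``general fact'' leaves the second-order step unproved. The fix is one extra lemma in the spirit of your part (i): along the singular curve, $\hat\gamma'(s)=\delta(s)W(s)$, where $\delta(s)=\det(c'(s),\eta(s))$ up to a nonvanishing factor and $W(s)=d\mathcal{F}_1(\tau(s))\neq0$ for any field $\tau$ transverse to $\eta$; comparing with $\hat\gamma'=\alpha_{\mathcal{E}_a}\mu_{\mathcal{E}_a}$ shows that $\delta$ and $\alpha_{\mathcal{E}_a}$ agree up to a nonvanishing smooth factor, so $\delta(s_0)=0,\ \delta'(s_0)\neq0$ holds iff $\alpha_{\mathcal{E}_a}(s_0)=0,\ \alpha'_{\mathcal{E}_a}(s_0)\neq0$, which is exactly criterion (B) (and re-proves (A)). The paper sidesteps this entirely because its explicit identity $\det(c'(s),\xi(s))=\alpha_{\mathcal{E}_a}(s)$ holds identically in $s$, so both the value condition of (i) and the derivative condition of (ii) transfer at once; your argument, as written, establishes only equality of zero sets, which suffices for (i) but not for (ii) without the lemma above.
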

\begin{proof}
Suppose that $\mathcal{F}_1(\gamma_s)(s,\theta)$ has singularity at $(s_0,\theta_0)$. Then from \eqref{spacefscase1-4} we have
\[ \tan\theta_0=\dfrac{-\alpha(s_0)\hat{n}'(s_0)+\hat{n}(s_0)\alpha'(s_0)}{\hat{\ell}(s_0)\hat{n}(s_0)\sqrt{\hat{n}^2(s_0)-\alpha^2(s_0)}}.  \]
The proof of this theorem is based on the well-known criteria for the cuspidal edge and the swallowtail (See \cite{Izumiya-Circular,kokubu} for details).  We consider the signed density function 
\begin{align*}
    \lambda(s,\theta)&=\det\left(\mathcal{F}_1, \dfrac{\partial}{\partial s}\mathcal{F}_1,  \dfrac{\partial}{\partial \theta}\mathcal{F}_1, \mu\right)\\
    &=-\dfrac{1}{\sqrt{\hat{n}^2(s)-\alpha^2(s)}}\left(\dfrac{\cos\theta(\alpha(s)\hat{n}'(s)-\hat{n}(s)\alpha'(s))}{\sqrt{\hat{n}^2(s)-\alpha^2(s)}}+\sin\theta \hat{\ell}(s)\hat{n}(s)\right).
\end{align*}
Set $\lambda^{-1}(0)=\mathcal{S}(\mathcal{F}_1(\gamma_s))$. We see that $\mathcal{S}(\mathcal{F}_1(\gamma_s))=\{(s,\theta(s))\}$, where $\theta(s)$ is a function satisfying $\lambda(s,\theta(s))=0$. Then we have 
\[ \dfrac{\partial}{\partial\theta}\lambda(s,\theta)=-\dfrac{1}{\sqrt{\hat{n}^2(s)-\alpha^2(s)}}\left(\dfrac{-\sin\theta(\alpha(s)\hat{n}'(s)-\hat{n}(s)\alpha'(s))}{\sqrt{\hat{n}^2(s)-\alpha^2(s)}}+\cos\theta \hat{\ell}(s)\hat{n}(s)\right)\neq 0, \]
since $(\alpha(s)\hat{n}'(s)-\hat{n}(s)\alpha'(s),\hat{\ell}(s)\hat{n}(s))\neq (0,0)$. Therefore any $p\in \mathcal{S}(\mathcal{F}_1(\gamma_s))$ is non-degenerate. Let $p$ be a non-degenerate singular point. Then there exists a regular curve $c:I\to I\times\mathbb{R}\subset \mathbb{R}^2$ such that $c(s_0)=p$ and $\text{image}(c)=\mathcal{S}(\mathcal{F}_1(\gamma_s))$ near $p$. Let $c(s)=(s,\theta(s))$. Consider the null vector field $\xi:I\to\mathbb{R}^2$ along $c(s)$ given by $\xi(s)=(1,-\alpha(s)\hat{\ell}(s)/\sqrt{\hat{n}^2(s)-\alpha^2(s)})$. Then from \eqref{spacefscase1-4}
\begin{align*}
\det(c'(s_0), \xi(s_0))&=-\dfrac{\alpha(s_0)\hat{\ell}(s_0)}{\sqrt{\hat{n}^2(s_0)-\alpha^2(s_0)}}+\theta'(s_0) \\
&=-\dfrac{\alpha(s_0)\hat{\ell}(s_0)}{\sqrt{\hat{n}^2(s_0)-\alpha^2(s_0)}}
    +\dfrac{d}{ds}\arctan\left(\dfrac{-\alpha(s)\hat{n}'(s)+\hat{n}(s)\alpha'(s)}{\hat{\ell}(s)\hat{n}(s)\sqrt{\hat{n}^2(s)-\alpha^2(s)}}\right)\bigg|_{s=s_0}\\
    &=\alpha_{\mathcal{E}_a}(s_0).
\end{align*}
Thus from \cite[Theorem 6.1(A)]{Izumiya-Circular}, $\mathcal{F}_1(\gamma_s)$ is locally diffeomorphic to the cuspidal edge at $(s_0,\theta_0)$ if and only if $\alpha_{\mathcal{E}_a}(s_0)\neq 0$. 

From \cite[Theorem 6.1(B)]{Izumiya-Circular},  $\mathcal{F}_1(\gamma_s)$ is locally diffeomorphic to the swallowtail at $(s_0,\theta_0)$ if and only if $\det(c'(s_0), \xi(s_0))=\alpha_{\mathcal{E}_a}(s_0)=0$ and   $(d/ds)\det(c'(s), \xi(s))\big|_{s=s_0}=\alpha'_{\mathcal{E}_a}(s_0)\neq 0$.
\end{proof}
\subsubsection*{Case 2.}
Let $\epsilon_\zeta\hat{\epsilon}=-1$, i.e., $\epsilon_\zeta=-\hat{\epsilon}$. Then one of $\zeta$ and $f_1$ is a timelike vector, and the other one is a spacelike vector. In this case we have four subcases depending on the causal characters of $\zeta$ and $f_1$ and the causal character of the focal surface. We can express these four cases with a single equation:
\begin{equation}\label{spacefscase2}
    \mathcal{F}_2(s,\theta)=P(\theta)\,\zeta(s)+R(\theta) f_1(s),
\end{equation}
where $(P(\theta), R(\theta))\in \{ (\cosh\theta,\sinh\theta), (\sinh\theta,\cosh\theta)\}$.
It is easy to see that $\mathcal{F}_2(s,\theta)\in AdS^3$ or $\mathcal{F}_2(s,\theta)\in S^3_2$ depending upon $\hat{\epsilon}$, $P(\theta)$, and $R(\theta)$. This focal surface is also independent of the choice of parametrization. Taking partial derivatives of \eqref{spacefscase2}, we obtain
\begin{align*}
    \dfrac{\partial}{\partial s}\mathcal{F}_2(s,\theta)=&\dfrac{P(\theta)}{\hat{\epsilon}(\hat{n}^2(s)+\hat{\epsilon}\alpha^2(s))} \bigg( \dfrac{\alpha(s)\hat{n}'(s)-\hat{n}(s)\alpha'(s)}{\sqrt{\hat{\epsilon}(\hat{n}^2(s)+\hat{\epsilon}\alpha^2(s))}}(\alpha(s)\gamma(s)+\hat{\epsilon}\hat{n}(s)f_2(s)) \nonumber \\ & -\alpha(s)\hat{\ell}(s)\sqrt{\hat{\epsilon}(\hat{n}^2(s)+\hat{\epsilon}\alpha^2(s))}f_1(s)  \bigg)  +R(\theta)\hat{\ell}(s) f_2(s), \\
     \dfrac{\partial}{\partial\theta}\mathcal{F}_2(s,\theta)=&R(\theta)\,\zeta(s)+P(\theta) f_1(s).  
\end{align*}
From these partial derivatives and \eqref{spacefscase2}
\[ \mathcal{F}_2\times \frac{\partial}{\partial s}\mathcal{F}_2\times \frac{\partial}{\partial \theta}\mathcal{F}_2= \dfrac{R^2(\theta)-P^2(\theta)}{\sqrt{\hat{\epsilon}(\hat{n}^2(s)+\hat{\epsilon}\alpha^2(s))}}\left(\dfrac{P(\theta)(\alpha(s)\hat{n}'(s)-\hat{n}(s)\alpha'(s))}{\sqrt{\hat{\epsilon}(\hat{n}^2(s)+\hat{\epsilon}\alpha^2(s))}}+R(\theta) \hat{\ell}(s)\hat{n}(s)\right)\mu(s). \]
Therefore, the focal surface $\mathcal{F}_2$ has singularity at $(s_0,\theta_0)$ if and only if
\begin{equation*}
    P(\theta_0)(\alpha(s_0)\hat{n}'(s_0)-\hat{n}(s_0)\alpha'(s_0))+R(\theta_0) \hat{\ell}(s_0)\hat{n}(s_0)\sqrt{\hat{\epsilon}(\hat{n}^2(s)+\hat{\epsilon}\alpha^2(s))}=0.
\end{equation*}
Introducing this equation into \eqref{spacefscase2} simply gives the point $\mathcal{E}(\gamma_s)(s_0)$ on the evolute \eqref{evolutespace} of $\gamma_s$. We conclude again that the set of singular points of $\mathcal{F}_2(s,\theta)$ coincides with the evolute of $\gamma_s$. Following similar steps in Theorem \ref{spfocalsin}, one can easily prove a similar theorem for $\mathcal{F}_2(s,\theta)$.
\subsection{PS-height functions}
Let $(\gamma_s,f_1,f_2):I\to AdS^3\times \Delta_1$ be a pseudo-spherical spacelike framed immersion. We now show how to explain the focal surface and the evolute of $\gamma_s$ as a wavefront from the viewpoint of Legendrian singularity theory. Define two families of functions $F^T:I\times AdS^3\to\mathbb{R}$ by $F^T(s,\mathbf{v})=\langle \mu(s),\mathbf{v}\rangle$ called \textit{the PS-timelike height function}, and $F^S:I\times S^3_2\to\mathbb{R}$ by $F^S(s,\mathbf{v})=\langle \mu(s),\mathbf{v}\rangle$ called \textit{the PS-spacelike height function} on $(\gamma_s,f_1,f_2)$. By a direct calculation, we have the following proposition.
\begin{proposition}
   Let $(\gamma_s,f_1,f_2):I\to AdS^3\times \Delta_1$ be a pseudo-spherical spacelike framed immersion with $g(s):=\hat{\epsilon}(\alpha(s)\hat{n}'(s)-\hat{n}(s)\alpha'(s))^2-\hat{\ell}^2(s)\hat{n}^2(s)(\hat{n}^2(s)+\hat{\epsilon}\alpha^2(s))\neq 0$.
   \begin{enumerate}
       \item[{\normalfont (i)}] Suppose that $g(s_0)<0$ for $s_0\in I$.
       \begin{enumerate}
           \item[{\normalfont (a)}] $F^T(s_0,\mathbf{v}_0)=0$ if and only if there exist real numbers $a,b,$ and $c$ such that $\mathbf{v}_0=a\gamma_s(s_0)+bf_1(s_0)+cf_2(s_0)$ with $-a^2+\hat{\epsilon}b^2-\hat{\epsilon}c^2=-1$.
           \item[{\normalfont (b)}] $F^T(s_0,\mathbf{v}_0)=(\partial F^T/\partial s)(s_0,\mathbf{v}_0)=0$ if and only if there exist real numbers $a$ and $b$ such that $\mathbf{v}_0=a\gamma_s(s_0)+bf_1(s_0)+(-a \alpha(s_0)/\hat{n}(s_0))f_2(s_0)$ with $\hat{n}^2(s_0)(1+\hat{\epsilon}b^2)=a^2(\hat{n}^2(s_0)+\hat{\epsilon}\alpha^2(s_0))$. 
           \item[{\normalfont (c)}] $F^T(s_0,\mathbf{v}_0)=(\partial F^T/\partial s)(s_0,\mathbf{v}_0)=(\partial^2 F^T/\partial s^2)(s_0,\mathbf{v}_0)=0$ if and only if $\mathbf{v}_0=\mathcal{E}_a(\gamma_s)(s_0)$.
       \end{enumerate}
       \item[{\normalfont (ii)}] Suppose that $g(s_0)>0$ for $s_0\in I$.
       \begin{enumerate}
           \item[{\normalfont (a)}] $F^S(s_0,\mathbf{v}_0)=0$ if and only if there exist real numbers $a,b,$ and $c$ such that $\mathbf{v}_0=a\gamma_s(s_0)+bf_1(s_0)+cf_2(s_0)$ with $-a^2+\hat{\epsilon}b^2-\hat{\epsilon}c^2=1$.
           \item[{\normalfont (b)}]$F^S(s_0,\mathbf{v}_0)=(\partial F^S/\partial s)(s_0,\mathbf{v}_0)=0$ if and only if there exist real numbers $a$ and $b$ such that $\mathbf{v}_0=a\gamma_s(s_0)+bf_1(s_0)+(-a \alpha(s_0)/\hat{n}(s_0))f_2(s_0)$ with $\hat{n}^2(s_0)(\hat{\epsilon}b^2-1)=a^2(\hat{n}^2(s_0)+\hat{\epsilon}\alpha^2(s_0))$. Note that if $\hat{\epsilon}=-1$, then $\hat{n}^2(s_0)<\alpha^2(s_0)$ must be satisfied.
           \item[{\normalfont (c)}] $F^S(s_0,\mathbf{v}_0)=(\partial F^S/\partial s)(s_0,\mathbf{v}_0)=(\partial^2 F^S/\partial s^2)(s_0,\mathbf{v}_0)=0$ if and only if $\mathbf{v}_0=\mathcal{E}_p(\gamma_s)(s_0)$.
       \end{enumerate}
   \end{enumerate}
\end{proposition}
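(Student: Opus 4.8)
The plan is to expand the test vector $\mathbf{v}_0$ in the pseudo-orthonormal moving frame $\{\gamma_s,f_1,f_2,\mu\}$ and read each height-function condition as a linear relation among the coefficients. First I would record the causal signs of the frame: since $\gamma_s\in AdS^3$ and $\mu$ is spacelike, we have $\langle\gamma_s,\gamma_s\rangle=-1$, $\langle\mu,\mu\rangle=1$, $\langle f_1,f_1\rangle=\hat{\epsilon}$, and $\langle f_2,f_2\rangle=-\hat{\epsilon}$. Writing $\mathbf{v}_0=a\gamma_s+bf_1+cf_2+d\mu$, the height function is simply $F^T(s_0,\mathbf{v}_0)=\langle\mu(s_0),\mathbf{v}_0\rangle=d$, so the vanishing of $F^T$ forces $d=0$; together with the membership constraint $\langle\mathbf{v}_0,\mathbf{v}_0\rangle=-1$ (because $\mathbf{v}_0\in AdS^3$) this gives exactly $-a^2+\hat{\epsilon}b^2-\hat{\epsilon}c^2=-1$, proving (i)(a).

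For the first derivative I would differentiate using \eqref{SpacelikeNSF}, giving $\mu'=\alpha\gamma_s+\hat{\epsilon}\hat{n}f_2$, hence $(\partial F^T/\partial s)(s_0,\mathbf{v}_0)=\langle\mu',\mathbf{v}_0\rangle=-\alpha a-\hat{n}c$. Setting this to zero yields $c=-\alpha a/\hat{n}$; substituting into the norm relation from (a) produces $\hat{n}^2(1+\hat{\epsilon}b^2)=a^2(\hat{n}^2+\hat{\epsilon}\alpha^2)$, which is (i)(b). For the second derivative I would compute, again from \eqref{SpacelikeNSF}, $\mu''=\alpha'\gamma_s+\hat{\epsilon}\hat{\ell}\hat{n}f_1+\hat{\epsilon}\hat{n}'f_2+(\alpha^2+\hat{\epsilon}\hat{n}^2)\mu$, so that $\langle\mu'',\mathbf{v}_0\rangle=-\alpha'a+\hat{n}\hat{\ell}b-\hat{n}'c$. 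Imposing $d=0$, $c=-\alpha a/\hat{n}$, and the vanishing of this expression leaves $a$ as the only free parameter, with $b=-a(\alpha\hat{n}'-\hat{n}\alpha')/(\hat{n}^2\hat{\ell})$ determined linearly by $a$.

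The one step that needs care is recognizing that the final identification with $\mathcal{E}_a(\gamma_s)(s_0)$ is forced. Here I would substitute $c=-\alpha a/\hat{n}$ and $b=-a(\alpha\hat{n}'-\hat{n}\alpha')/(\hat{n}^2\hat{\ell})$ into $\langle\mathbf{v}_0,\mathbf{v}_0\rangle=-1$ and simplify; the numerator that appears is exactly $g(s_0)=\hat{\epsilon}(\alpha\hat{n}'-\hat{n}\alpha')^2-\hat{\ell}^2\hat{n}^2(\hat{n}^2+\hat{\epsilon}\alpha^2)$, so that $a^2=\hat{n}^4\hat{\ell}^2/(-g(s_0))$, which is well-defined precisely because $g(s_0)<0$. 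Comparing the resulting coefficients $a$, $b$, $c$ against those of $\mathcal{E}_a(\gamma_s)$ in \eqref{evolutespace} shows they agree up to the common sign $\pm$, giving (i)(c). The genuine bookkeeping hazard throughout is tracking the factors of $\hat{\epsilon}$ (using $\hat{\epsilon}^2=1$) and correctly reading $\langle f_2,f_2\rangle=-\hat{\epsilon}$.

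Finally, part (ii) is entirely parallel: one replaces $AdS^3$ by $S^3_2$, so the membership constraint becomes $\langle\mathbf{v}_0,\mathbf{v}_0\rangle=+1$, and the same three computations with $F^S$ in place of $F^T$ reproduce (ii)(a)--(c), the sign flip producing $-a^2+\hat{\epsilon}b^2-\hat{\epsilon}c^2=1$ and $\hat{n}^2(\hat{\epsilon}b^2-1)=a^2(\hat{n}^2+\hat{\epsilon}\alpha^2)$. The normalization now reads $a^2=\hat{n}^4\hat{\ell}^2/g(s_0)$ and requires $g(s_0)>0$ so that $\mathbf{v}_0=\mathcal{E}_p(\gamma_s)(s_0)$; the caveat $\hat{n}^2(s_0)<\alpha^2(s_0)$ when $\hat{\epsilon}=-1$ is exactly the condition that this $a^2$ be compatible with a spacelike $\mathbf{v}_0$, matching the well-definedness remark in the evolute definition.
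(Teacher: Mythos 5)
Your proposal is correct, and it is precisely the ``direct calculation'' that the paper invokes without writing out: expanding $\mathbf{v}_0$ in the pseudo-orthonormal frame $\{\gamma_s,f_1,f_2,\mu\}$ with the signs $\langle\gamma_s,\gamma_s\rangle=-1$, $\langle f_1,f_1\rangle=\hat{\epsilon}$, $\langle f_2,f_2\rangle=-\hat{\epsilon}$, $\langle\mu,\mu\rangle=1$, reading off $d=0$, then $c=-\alpha a/\hat{n}$, then $b=-a(\alpha\hat{n}'-\hat{n}\alpha')/(\hat{n}^{2}\hat{\ell})$ from the successive vanishing conditions, and closing with the membership constraint to match the coefficients of \eqref{evolutespace}. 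The only cosmetic caveat is that your expression for $b$ divides by $\hat{\ell}(s_0)$; at a point where $\hat{\ell}(s_0)=0$ (still allowed when $g(s_0)\neq 0$) the same linear system instead forces $a=c=0$ and $\hat{\epsilon}b^{2}=\mp1$, giving $\mathbf{v}_0=\pm f_1$, which again equals the evolute \eqref{evolutespace} at such a point, so the conclusion is unaffected.
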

For both height functions $F^T$ and $F^S$ defined above, the discriminant sets $\mathcal{D}_{F^T}$
and $\mathcal{D}_{F^S}$ coincide with the images of the focal surfaces in Section \ref{secspacefocal}. Moreover the secondary discriminant sets  $\mathcal{D}^2_{F^T}$ and $\mathcal{D}^2_{F^S}$ coincide with the images of the evolutes $\mathcal{E}_a(\gamma_s)$ and $\mathcal{E}_p(\gamma_s)$, respectively. Here recall that for the discriminant set and the secondary discriminant set of a smooth function $F:(\mathbb{R}\times\mathbb{R}^r,(s_0,\mathbf{v}_0))\to \mathbb{R}$ are respectively defined by
\begin{align*}
    \mathcal{D}_F&=\left\{ \mathbf{v}\in \mathbb{R}^r\:\bigg|\: F=\dfrac{\partial}{\partial s}F=0\:\text{at $(s,\mathbf{v})$ for some $s$}\right\}, \\
    \mathcal{D}^2_F&=\left\{ \mathbf{v}\in \mathbb{R}^r\:\bigg|\: F=\dfrac{\partial}{\partial s}F=\dfrac{\partial^2}{\partial s^2}F=0\:\text{at $(s,\mathbf{v})$ for some $s$}\right\}.
\end{align*}
\begin{example}
Take the smooth curve $\gamma_s:I\to AdS^3$ defined by
\[ \gamma_s(s)=\dfrac{1}{\sqrt{2}}\left(\sqrt{1+s^4}, \sqrt{1+s^6}, s^2, s^3\right). \]
The derivative of this curve with respect to $s$ is
\[ \gamma_s'(s)=\dfrac{1}{\sqrt{2}}\left( \dfrac{2s^3}{\sqrt{1+s^4}}, \dfrac{3 s^5}{\sqrt{1+s^6}},2s,3s^2 \right). \]
Therefore, the curve $\gamma_s$ is singular at $s=0$. Define $v_1:I\to S^3_2$ and $v_2:I\to AdS^3$ by
\begin{align*}
    v_1(s)&=\dfrac{1}{\sqrt{2(8+18s^2+s^6)}}(s^3\sqrt{1+s^4},s^3\sqrt{1+s^6},s^5+6s, s^6-4),\\
    v_2(s)&=\dfrac{1}{\sqrt{8+18s^2+s^6}\sqrt{4+9s^2+13s^6}}\big(-\sqrt{1+s^4}(4+9s^2-2s^6), \sqrt{1+s^6}(4+9s^2-2s^6),\\
    &\qquad\qquad\qquad\qquad\qquad\qquad\qquad\qquad 2s^2(-2+3s^2+s^6),3s^3(-2+3s^2+s^6)\big).
\end{align*}
It is easy to see that $\langle v_1, \gamma_s\rangle=0$, $\langle v_2, \gamma_s\rangle=0$, $\langle v_1, \gamma_s'\rangle=0$, and $\langle v_2, \gamma_s'\rangle=0$. Thus $(\gamma_s,v_1,v_2):I\to AdS^3\times \Delta_1$ is a pseudo-spherical spacelike framed curve in $AdS^3$. From the triple vector product $\gamma_s\times v_1\times v_2$, we find that
\[ \mu(s)=\dfrac{\sqrt{1+s^4}\sqrt{1+s^6}}{\sqrt{4+9s^2+13s^6}}\left(\dfrac{2s^2}{\sqrt{1+s^4}},\dfrac{3s^4}{\sqrt{1+s^6}},2,3s\right). \]
The curvature of this pseudo-spherical spacelike framed curve is given by $(\alpha,\ell,m,n)$, where
\begin{align*}
    \alpha(s)&=\dfrac{s\sqrt{4+9s^2+13s^6}}{\sqrt{2}\sqrt{1+s^4}\sqrt{1+s^6}},\\
    \ell(s)&=\dfrac{6\sqrt{2}s^2(2-3s^2-s^6)}{(8+18s^2+s^6)\sqrt{4+9s^2+13s^6}},\\
    m(s)&=\dfrac{12+16s^4+21s^6+25s^{10}}{\sqrt{2}\sqrt{1+s^4}\sqrt{1+s^6}\sqrt{8+18s^2+s^6}\sqrt{4+9s^2+13s^6}},\\
    n(s)&=\dfrac{s(-16+30s^2+81s^4+58s^6+102s^8+65s^{12})}{\sqrt{1+s^4}\sqrt{1+s^6}\sqrt{8+18s^2+s^6}(4+9s^2+13s^6)}.
\end{align*}
So $(\gamma_s,v_1,v_2)$ is actually a pseudo-spherical spacelike framed immersion in $AdS^3$. The hyperbolic Hopf map \eqref{hopfmap} allows us to visualize the projection of $\gamma_s$ on the hyperbolic space $H^2(1/2)$. It is easy to see that
\[ \mathbf{h}(\gamma_s)=\dfrac{1}{2}\left( s^2(\sqrt{1+s^4}+s \sqrt{1+s^6}),s^2(s\sqrt{1+s^4}-\sqrt{1+s^6}),1+s^4+s^6\right)\in H^2(1/2), \]
which is visualized in Figure \ref{fig1}.
\begin{figure}[H]
		\centering
		\includegraphics[width=0.6\textwidth]{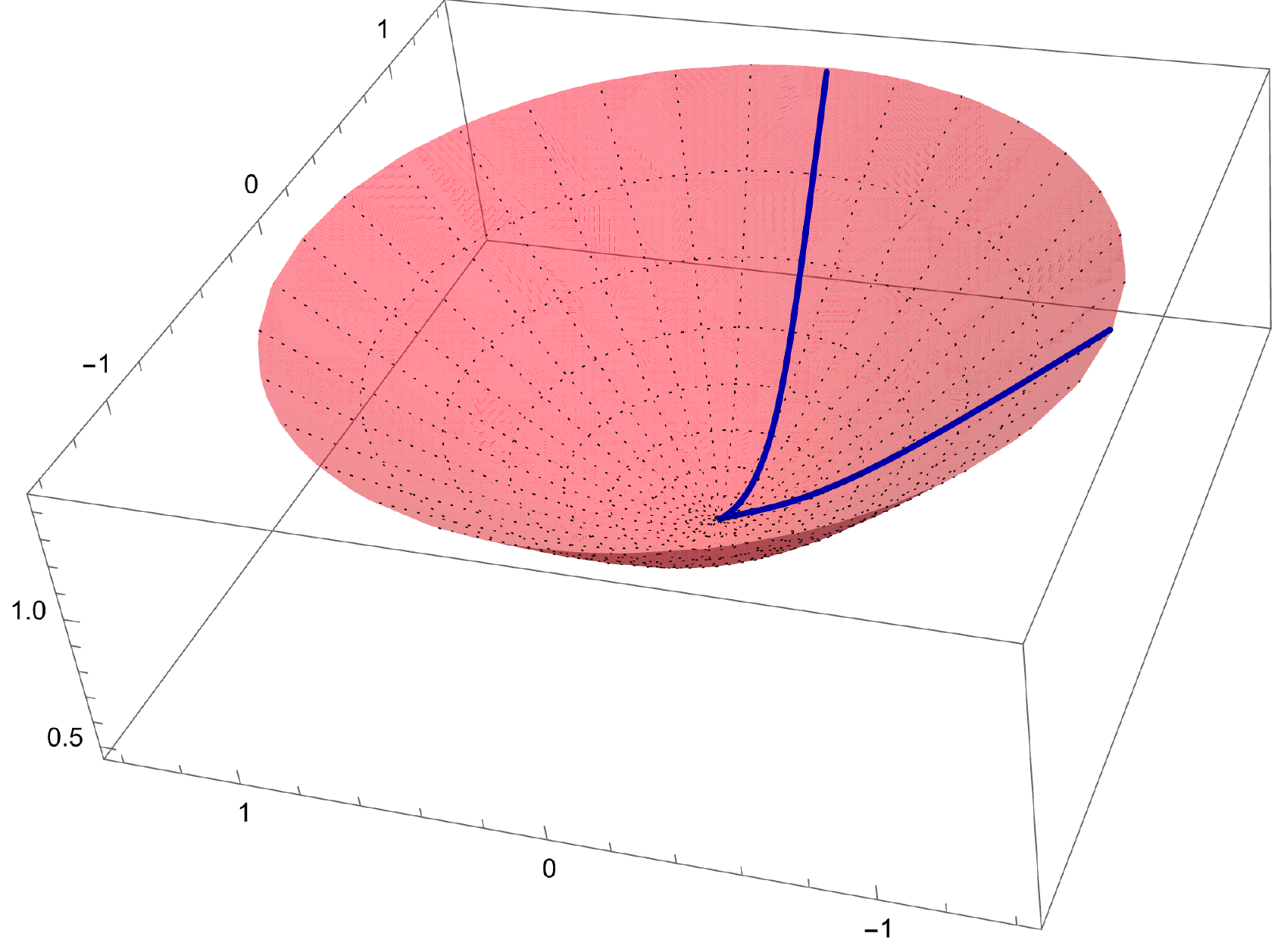}
		\caption{The projection of $\gamma_s$ on $H^2(1/2)$ by the hyperbolic Hopf map} \label{fig1}
	\end{figure}
The computation of the evolute of $\gamma_s$ is quite long and messy. We do this by using Wolfram-Mathematica and find the AdS-evolute of $\gamma_s$ which is very bulky to be written here. We just visualize of the projection on $H^2(1/2)$ of this evolute by using the hyperbolic hopf map (See Figure \ref{fig2}).
\begin{figure}[H]
		\centering
		\includegraphics[width=0.6\textwidth]{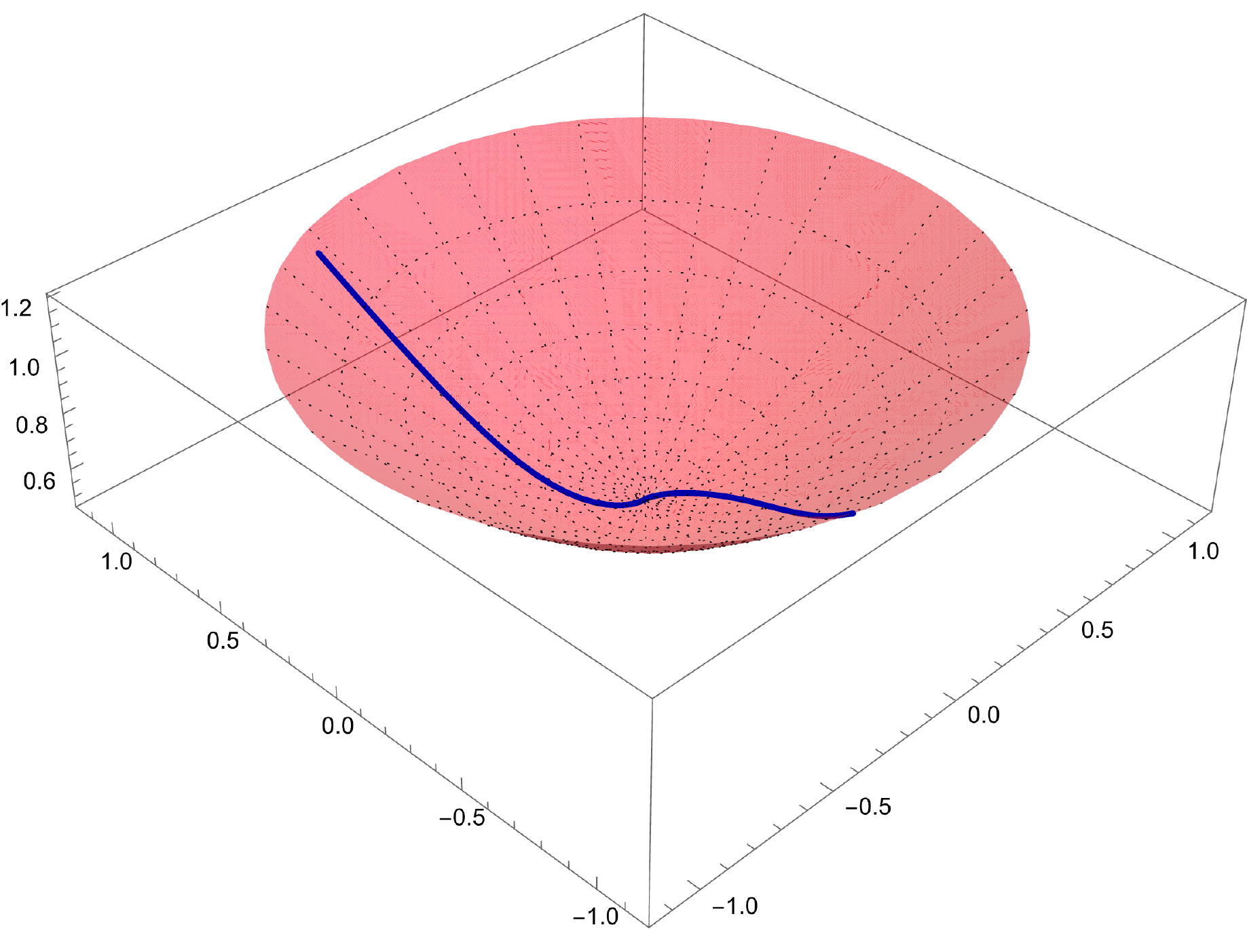}
		\caption{The projection of the anti-de Sitter evolute of $\gamma_s$ on $H^2(1/2)$ by the hyperbolic Hopf map} \label{fig2}
	\end{figure}
\end{example}
\section{Evolutes and focal surfaces of pseudo-spherical timelike framed immersions in the anti-de Sitter 3-space}

In this section, we introduce evolutes of pseudo-spherical timelike framed immersions in $AdS^3$ and investigate the properties of these evolutes. Throughout this section we assume $\alpha^2(s)\neq \hat{n}^2(s)$ for all $s\in I$ unless otherwise stated.

\begin{definition}
    The \textit{total evolute} $\mathcal{E}(\gamma_t)$ of a pseudo-spherical timelike framed immersion $(\gamma_t, f_1,f_2)$ is defined by
    \begin{equation}\label{evolutetime}
        \mathcal{E}(\gamma_t)(s)=\pm \dfrac{\hat{\ell}(s)\hat{n}(s)\,(\hat{n}(s)\gamma_t(s)-\alpha(s)f_2(s))-(\alpha(s)\hat{n}'(s)-\hat{n}(s)\alpha'(s))f_1(s)}{\sqrt{\big|(\alpha(s)\hat{n}'(s)-\hat{n}(s)\alpha'(s))^2-\hat{\ell}^2(s)\hat{n}^2(s)(\hat{n}^2(s)-\alpha^2(s))\big|}},
    \end{equation}
    where $f(s):=(\alpha(s)\hat{n}'(s)-\hat{n}(s)\alpha'(s))^2-\hat{\ell}^2(s)\hat{n}^2(s)(\hat{n}^2(s)-\alpha^2(s))\neq 0$.
    Notice that if $f(s)>0$, then $\mathcal{E}(\gamma_t)(s)\in S^3_2$. This evolute is denoted by $ \mathcal{E}_p(\gamma_t)(s)$ and called the \textit{PS-evolute} of $\gamma_t$. If $f(s)<0$, then $\mathcal{E}(\gamma_t)(s)\in AdS^3$. In this case, we denote this evolute by $ \mathcal{E}_a(\gamma_t)(s)$ and we call it the \textit{AdS-evolute} of $\gamma_t$. Note that if $f(s)<0$, then we must assume that $\hat{n}^2(s)>\alpha^2(s)$ since otherwise $\mathcal{E}(\gamma_t)$ is not well-defined.
\end{definition}
We will explain how Equation \eqref{evolutetime} can be derived shortly by using focal surfaces or certain height functions. For now, we investigate some geometric properties of these evolutes. The following proposition follows similarly to Proposition \ref{propsevolparam}.
\begin{proposition}
Let $(\gamma_t,f_1,f_2):I\to AdS^3\times \Delta_5$ be a pseudo-spherical timelike framed immersion with the curvature $(\hat{\alpha},\hat{\ell},\hat{m},\hat{n})$. Then the total evolute $\mathcal{E}(\gamma_t)$ of $\gamma_t$ is independent of the parametrization of $(\gamma_t, f_1, f_2)$.
\end{proposition}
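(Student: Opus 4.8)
The plan is to follow the argument of Proposition~\ref{propsevolparam} essentially verbatim in the timelike setting, the key observation being that the defining formula \eqref{evolutetime} is a ratio of two quantities that scale homogeneously under a change of parameter. First I would take a second pseudo-spherical timelike framed immersion $(\tilde\gamma_t,\tilde f_1,\tilde f_2)$ with curvature $(\hat\alpha_1,\hat\ell_1,\hat m_1,\hat n_1)$, and assume it is parametrically equivalent to $(\gamma_t,f_1,f_2)$ via a positive change of parameter $u:\tilde I\to I$, so that $\tilde\gamma_t(s)=\gamma_t(u(s))$, $\tilde f_1(s)=f_1(u(s))$, and $\tilde f_2(s)=f_2(u(s))$. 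By the timelike analogue of Proposition~\ref{propsparametrization} (recorded in the Remark following the existence and uniqueness theorem for timelike framed curves), the curvatures transform by
\[ \hat\alpha_1(s)=u'(s)\,\hat\alpha(u(s)),\quad \hat\ell_1(s)=u'(s)\,\hat\ell(u(s)),\quad \hat m_1(s)=u'(s)\,\hat m(u(s)),\quad \hat n_1(s)=u'(s)\,\hat n(u(s)). \]

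Next I would record the one nontrivial scaling identity. Differentiating the relations for $\hat\alpha_1$ and $\hat n_1$ produces $u''$-terms, but in the Wronskian-type combination these cancel, leaving
\[ \hat\alpha_1(s)\hat n_1'(s)-\hat n_1(s)\hat\alpha_1'(s)=\big(u'(s)\big)^3\big(\hat\alpha(u(s))\hat n'(u(s))-\hat n(u(s))\hat\alpha'(u(s))\big), \]
exactly as in \eqref{evparam2}. With this in hand the argument reduces to degree counting. The numerator of \eqref{evolutetime} is assembled from the scalars $\hat\ell\hat n^2$, $\hat\ell\hat n\hat\alpha$, and $\hat\alpha\hat n'-\hat n\hat\alpha'$, each of which acquires a factor $\big(u'(s)\big)^3$, while the frame vectors $\gamma_t,f_1,f_2$ are unchanged; hence the numerator scales by $\big(u'(s)\big)^3$. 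Inside the radical, both $(\hat\alpha\hat n'-\hat n\hat\alpha')^2$ and $\hat\ell^2\hat n^2(\hat n^2-\hat\alpha^2)$ scale by $\big(u'(s)\big)^6$, so the argument of the absolute value scales by $\big(u'(s)\big)^6$ and, since $u'>0$, the square root scales by $\big(u'(s)\big)^3$.

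Therefore the factors $\big(u'(s)\big)^3$ cancel between numerator and denominator, and I would conclude
\[ \mathcal{E}(\tilde\gamma_t)(s)=\mathcal{E}(\gamma_t)(u(s)), \]
which is precisely the asserted parametrization independence. I do not anticipate a genuine obstacle: the only point requiring care is verifying that the second-derivative terms cancel in the combination $\hat\alpha\hat n'-\hat n\hat\alpha'$ so that it scales cleanly as $\big(u'(s)\big)^3$ rather than picking up stray $u''$-contributions, and once this is checked the matching homogeneity of the numerator and of the radicand makes the cancellation automatic. The sign ambiguity $\pm$ in \eqref{evolutetime} is harmless, since $u'>0$ preserves the orientation.
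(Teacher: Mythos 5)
Your proof is correct and takes essentially the same route as the paper: the paper disposes of this proposition by remarking that it ``follows similarly to Proposition~\ref{propsevolparam}'', and your argument---curvatures scaling by $u'$, the Wronskian combination $\hat\alpha\hat n'-\hat n\hat\alpha'$ scaling by $(u')^3$ with the $u''$-terms cancelling, and the matched homogeneity of the numerator and the radicand in \eqref{evolutetime}---is precisely that adaptation. One minor remark: your Wronskian identity carries the correct sign, whereas the paper's \eqref{evparam2} has the two terms transposed (a harmless typo, absorbed by the $\pm$ in the definition of the evolute).
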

\begin{theorem}\label{evolaframed}
The AdS-evolute $\mathcal{E}_a(\gamma_t)$ of $\gamma_t$ is a pseudo-spherical spacelike framed base curve in $AdS^3$. More precisely, $(\mathcal{E}_a(\gamma_t),\mu,\eta)$ is a pseudo-spherical spacelike framed immersion with the curvature $(\alpha_{\mathcal{E}_a},\hat{\ell}_{\mathcal{E}_a},0,\hat{n}_{\mathcal{E}_a})$, where 
\begin{align*}
    \eta(s)&=\dfrac{\alpha(s) \gamma_t(s)-\hat{n}(s)f_2(s)}{\sqrt{\hat{n}^2(s)-\alpha^2(s)}}, \\
    \mu_{\mathcal{E}_a}(s)&=\dfrac{(\alpha(s)\hat{n}'(s)-\hat{n}(s)\alpha'(s))(\hat{n}(s)\gamma_t(s)-\alpha(s)f_2(s))-\hat{\ell}(s)\hat{n}(s)(\hat{n}^2(s)-\alpha^2(s))f_1(s)}{\sqrt{\hat{n}^2(s)-\alpha^2(s)}\sqrt{\hat{\ell}^2(s)\hat{n}^2(s)(\hat{n}^2(s)-\alpha^2(s))-(\alpha(s)\hat{n}'(s)-\hat{n}(s)\alpha'(s))^2}},\\
    \alpha_{\mathcal{E}_a}(s)&=\sqrt{\hat{n}^2(s)-\alpha^2(s)}\dfrac{\hat{n}\big( 2\hat{\ell}\alpha'\hat{n}'-\hat{n}(-\alpha'\hat{\ell}'+\hat{\ell}\alpha'') \big)-\alpha\big(\hat{\ell}^3\hat{n}^2+\hat{n}\hat{\ell}'\hat{n}'+\hat{\ell}(2(\hat{n}')^2-\hat{n}\hat{n}'') \big)}{\hat{\ell}^2(s)\hat{n}^2(s)(\hat{n}^2(s)-\alpha^2(s))-(\alpha\hat{n}'(s)-\hat{n}(s)\alpha'(s))^2 }, \\
    \hat{\ell}_{\mathcal{E}_a}(s)&=-\sqrt{\hat{n}^2(s)-\alpha^2(s)},\\
    \hat{n}_{\mathcal{E}_a}(s)&= \dfrac{\sqrt{\hat{\ell}^2(s)\hat{n}^2(s)(\hat{n}^2(s)-\alpha^2(s))-(\alpha(s)\hat{n}'(s)-\hat{n}(s)\alpha'(s))^2}}{\alpha^2(s)-\hat{n}^2(s)}.
\end{align*}
\end{theorem}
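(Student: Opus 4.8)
The plan is to transcribe the argument of Theorem \ref{evolatimeframed} almost line for line, exploiting the fact that the derivative formulas \eqref{TimelikeNSF} have the same shape as their spacelike counterparts, and keeping track only of the sign changes forced by the timelike setting. First I would record the causal characters: since $\gamma_t\in AdS^3$ and $v_1,v_2\in S^3_2$, the frame $\{\gamma_t,f_1,f_2,\mu\}$ has $f_1,f_2$ spacelike and $\mu$ \emph{timelike}. A one-line computation shows that the pseudo-norm of the numerator of \eqref{evolutetime} equals $f(s)$, so after dividing by $\sqrt{|f(s)|}$ one gets $\langle\mathcal{E}_a(\gamma_t),\mathcal{E}_a(\gamma_t)\rangle=f(s)/|f(s)|=-1$ whenever $f(s)<0$; this confirms $\mathcal{E}_a(\gamma_t)\in AdS^3$. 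Likewise $\langle\eta,\eta\rangle=\hat{n}^2-\alpha^2>0$, so $\eta$ is spacelike while $\mu$ is timelike, which means the pair $(\mu,\eta)$ lands in $\Delta_1$ -- the correct contact manifold for a \emph{spacelike} framed curve.

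Next I would verify the two framed-curve conditions. The relation $\langle\mathcal{E}_a(\gamma_t),\mu\rangle=0$ is immediate, since $\mathcal{E}_a(\gamma_t)$ is a combination of $\gamma_t,f_1,f_2$, all pseudo-orthogonal to $\mu$; and $\langle\mathcal{E}_a(\gamma_t),\eta\rangle=0$ follows from a short cancellation, the $\gamma_t$- and $f_2$-contributions being $\mp\hat{\ell}\hat{n}^2\alpha$. Differentiating \eqref{evolutetime} by means of \eqref{TimelikeNSF}, the $\mu$-terms cancel (the contributions $\hat{n}\alpha\mu$ and $-\alpha\hat{n}\mu$ are opposite), and $\mathcal{E}_a'(\gamma_t)$ collapses to a scalar multiple $\Lambda(s)$ of $(\alpha\hat{n}'-\hat{n}\alpha')(\hat{n}\gamma_t-\alpha f_2)-\hat{\ell}\hat{n}(\hat{n}^2-\alpha^2)f_1$, which is exactly the numerator of the stated $\mu_{\mathcal{E}_a}$. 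From this form one reads off $\langle\mathcal{E}_a'(\gamma_t),\mu\rangle=0$ and $\langle\mathcal{E}_a'(\gamma_t),\eta\rangle=0$, the latter because the surviving coefficient is $(\alpha\hat{n}'-\hat{n}\alpha')(-\hat{n}\alpha+\alpha\hat{n})=0$. Hence $(\mathcal{E}_a(\gamma_t),\mu,\eta)$ is a pseudo-spherical spacelike framed curve.

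Then I would identify the curvature. One checks that the vector $\mu_{\mathcal{E}_a}$ in the statement is pseudo-orthogonal to $\mathcal{E}_a(\gamma_t),\mu,\eta$ and has numerator of pseudo-norm $(\hat{n}^2-\alpha^2)(-f(s))>0$, matching the stated denominator, so that $\mu_{\mathcal{E}_a}$ is a unit spacelike vector equal to $\mathcal{E}_a(\gamma_t)\times\mu\times\eta$; in particular the tangent $\mathcal{E}_a'(\gamma_t)=\alpha_{\mathcal{E}_a}\mu_{\mathcal{E}_a}$ is spacelike, so the new framed curve is genuinely \emph{spacelike}. The curvatures $\hat{\ell}_{\mathcal{E}_a}$ and $\hat{n}_{\mathcal{E}_a}$ then come out painlessly: from $\mu'=-(\alpha\gamma_t-\hat{n}f_2)=-\sqrt{\hat{n}^2-\alpha^2}\,\eta$ one obtains $\hat{\ell}_{\mathcal{E}_a}=-\sqrt{\hat{n}^2-\alpha^2}$, and since $\mu'$ has no $\mu_{\mathcal{E}_a}$-component the third curvature is $0$; computing $\langle\eta',\mu_{\mathcal{E}_a}\rangle$ and using $\alpha\hat{n}'-\hat{n}\alpha'$ reduces the numerator to $f(s)$, giving $\hat{n}_{\mathcal{E}_a}=-\sqrt{-f(s)}/(\hat{n}^2-\alpha^2)$, the claimed value. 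Finally, because $\hat{\ell}_{\mathcal{E}_a}=-\sqrt{\hat{n}^2-\alpha^2}$ is nowhere zero under the standing hypothesis $\hat{n}^2>\alpha^2$, the curvature never vanishes identically and $(\mathcal{E}_a(\gamma_t),\mu,\eta)$ is automatically an immersion.

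The one genuinely laborious step is the formula for $\alpha_{\mathcal{E}_a}$, which equals $\Lambda(s)$ times the norm of the numerator of $\mu_{\mathcal{E}_a}$ and therefore requires differentiating the normalizing factor $\sqrt{|f(s)|}$ and simplifying an expression in $\alpha'',\hat{\ell}',\hat{n}''$. This calculation is mechanical but long; I expect it to be the main obstacle, and the safest route is either to use a computer algebra system or to copy the corresponding computation from Theorem \ref{evolatimeframed}, inserting the sign adjustments caused by $\mu$ being timelike rather than spacelike.
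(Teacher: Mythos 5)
Your proposal is correct and follows essentially the same route as the paper's proof: verify the pseudo-orthogonality conditions for $(\mathcal{E}_a(\gamma_t),\mu,\eta)$, show that $\mathcal{E}_a'(\gamma_t)$ collapses to a scalar multiple of the numerator of $\mu_{\mathcal{E}_a}$, and then read off $\alpha_{\mathcal{E}_a}$, $\hat{\ell}_{\mathcal{E}_a}$, $0$, $\hat{n}_{\mathcal{E}_a}$ from the frame derivative formulas. Your additional checks (the pseudo-norm of the numerator equaling $f(s)$, the causal characters placing $(\mu,\eta)$ in $\Delta_1$, and the immersion property via $\hat{\ell}_{\mathcal{E}_a}\neq 0$) only supply details the paper leaves implicit.
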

\begin{proof}
We show that $(\mathcal{E}_a(\gamma_t),\mu,\eta)$ satisfies the conditions for being a pseudo-spherical spacelike framed immersion. It is easy to see that $\langle \mathcal{E}_a(\gamma_t),\mu \rangle=0$ and $\langle \mathcal{E}_a(\gamma_t), \eta \rangle=0$ since $\{\gamma_t, f_1,f_2,\mu \}$ is a pseudo-orthonormal frame. By a straightforward calculation we get
\[ \mathcal{E}_a'(\gamma_t)(s)=\Omega(s)\left((\alpha(s)\hat{n}'(s)-\hat{n}(s)\alpha'(s))(\hat{n}(s)\gamma(s)-\alpha(s) f_2(s))-\hat{\ell}(s)\hat{n}(s)(\hat{n}^2(s)-\alpha^2(s))f_1(s)\right), \]
where
\[ \Omega(s)=\dfrac{\hat{n}\big( 2\hat{\ell}\alpha'\hat{n}'-\hat{n}(-\alpha'\hat{\ell}'+\hat{\ell}\alpha'') \big)-\alpha\big(\hat{\ell}^3\hat{n}^2+\hat{n}\hat{\ell}'\hat{n}'+\hat{\ell}(2(\hat{n}')^2-\hat{n}\hat{n}'') \big)}{\left(\hat{\ell}^2(s)\hat{n}^2(s)(\hat{n}^2(s)-\alpha^2(s))-(\alpha\hat{n}'(s)-\hat{n}(s)\alpha'(s))^2 \right)^{3/2}}. \]
Hence we have $\langle \mathcal{E}'_a(\gamma_t),\mu \rangle=0$ and 
\[  \langle \mathcal{E}'_a(\gamma_t),\eta \rangle=\Omega(s)\big((\alpha(s)\hat{n}'(s)-\hat{n}(s)\alpha'(s))(-\hat{n}(s)\alpha(s)+\alpha(s)\hat{n}(s))\big)=0.\]
Therefore, since $\mu\in AdS^3$ and $\eta\in S^3_2$, $(\mathcal{E}_a(\gamma_t),\mu,\eta):I\to AdS^3\times \Delta_1$ is a pseudo-spherical spacelike framed curve. Then it is easy to calculate $\mu_{\mathcal{E}_a}=\mathcal{E}_a(\gamma_t)\times \mu\times\eta$ which directly gives $\alpha_{\mathcal{E}_a}$ from the equality $\mathcal{E}_a'(\gamma_t)=\alpha_{\mathcal{E}_a}\mu_{\mathcal{E}_a}$. Similarly $\hat{\ell}_{\mathcal{E}_a}$ and $\hat{n}_{\mathcal{E}_a}$ can be directly calculated by using derivative formulas of the pseudo-orthonormal frame $\{\mathcal{E}_a(\gamma_t),\mu,\eta,\mu_{\mathcal{E}_a} \}$ along $\mathcal{E}_a(\gamma_t)$.
\end{proof}
\begin{remark}
   Similar to the case in Remark \ref{remspotherevol}, one should expect that the PS-evolute $\mathcal{E}_p(\gamma_t)$ of $\gamma_t$ is also a framed immersion in $S_2^3$. We do not prove this fact here since our focus in this paper is on the pseudo-spherical framed curves in the anti-de Sitter 3-space.
\end{remark}
\begin{remark}\label{remalphaevol}
    Note that $\alpha_{\mathcal{E}_a}(s)$ in Theorem \ref{evolaframed} can be written a more compact form as
    \[\alpha_{\mathcal{E}_a}(s)=-\left(\dfrac{\alpha(s)\hat{\ell}(s)}{\sqrt{\hat{n}^2(s)-\alpha^2(s)}}+\dfrac{\omega'(s)}{1-\omega^2(s)}\right),\quad \omega(s)=\dfrac{-\alpha(s)\hat{n}'(s)+\hat{n}(s)\alpha'(s)}{\hat{\ell}(s)\hat{n}(s)\sqrt{\hat{n}^2(s)-\alpha^2(s)}}. \]
    Here for a point $s_0$ satisfying $\hat{\ell}(s_0)=\hat{n}(s_0)=0$, $\omega(s)$ is not well-defined since the denominator vanishes. However we exclude this case since simplifying $\omega'(s)/(1-\omega^2(s))$ will cancel out this vanishing term $\hat{\ell}(s)\hat{n}(s)$. Therefore we assume that $\alpha_{\mathcal{E}_a}(s)$ is well-defined at a point $s_0$ satisfying $\hat{\ell}(s_0)=\hat{n}(s_0)=0$. Our aim in writing $\alpha_{\mathcal{E}_a}(s)$ in a more compact form will become clear shortly.
\end{remark}

    \begin{proposition}
    \begin{enumerate}
        \item[{\normalfont (i)}] If $\gamma_t$ has singularity at $s_0$, then 
        \[ \mathcal{E}(\gamma_t)(s_0)=\pm \dfrac{\hat{n}(s_0)\ell(s_0) \gamma_t(s_0)+\alpha'(s_0)f_1(s_0)}{\sqrt{|(\alpha'(s_0))^2-\hat{\ell}^2(s_0)\hat{n}^2(s_0)|}}. \]
       In this case $\mathcal{E}(\gamma_t)$ has also singularity at $s_0$ if and only if 
        \[ 2\hat{\ell}(s_0)\alpha'(s_0)\hat{n}'(s_0)+n(s_0)(\alpha'(s_0)\hat{\ell}'(s_0)-\hat{\ell}(s_0)\alpha''(s_0))=0.\]
        \item[{\normalfont (ii)}] If $f_1$ has singularity at $s_0$, then 
$\mathcal{E}_p(\gamma_t)(s_0)=\pm f_1(s_0).$
        In this case $\mathcal{E}_p(\gamma_t)$ has also singularity at $s_0$ if and only if $\hat{n}(s_0)\hat{\ell}'(s_0)=0$.
        \item[{\normalfont (iii)}] If $f_2$ has singularity at $s_0$, then 
$\mathcal{E}_p(\gamma_t)(s_0)=\pm f_1(s_0).$
        In this case $\mathcal{E}_p(\gamma_t)$ has also singularity at $s_0$.
    \end{enumerate}
\end{proposition}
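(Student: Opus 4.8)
The plan is to mirror the argument for the spacelike analogue in Proposition \ref{spacepropevsing}. The three cases are organized by which of the three maps $\gamma_t$, $f_1$, $f_2$ becomes singular, and from the derivative formulas \eqref{TimelikeNSF} these correspond to explicit vanishing conditions on the curvature: reading off $\gamma_t'=\alpha\mu$, $f_1'=\hat{\ell}f_2$, and $f_2'=-\hat{\ell}f_1+\hat{n}\mu$, the map $\gamma_t$ is singular at $s_0$ exactly when $\alpha(s_0)=0$, the map $f_1$ is singular exactly when $\hat{\ell}(s_0)=0$, and the map $f_2$ is singular exactly when $\hat{\ell}(s_0)=\hat{n}(s_0)=0$. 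So the strategy is to substitute each vanishing condition into the evolute formula \eqref{evolutetime} and into its derivative, and then read off the resulting point and singularity criterion.

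The central ingredient is the derivative $\mathcal{E}'(\gamma_t)(s)$, which I would compute exactly as in the proof of Theorem \ref{evolaframed}: differentiating the quotient \eqref{evolutetime} and using \eqref{TimelikeNSF}, the result factors as a scalar function times the vector $V=(\alpha\hat{n}'-\hat{n}\alpha')(\hat{n}\gamma_t-\alpha f_2)-\hat{\ell}\hat{n}(\hat{n}^2-\alpha^2)f_1$, where the scalar carries the numerator $\hat{n}\big(2\hat{\ell}\alpha'\hat{n}'-\hat{n}(-\alpha'\hat{\ell}'+\hat{\ell}\alpha'')\big)-\alpha\big(\hat{\ell}^3\hat{n}^2+\hat{n}\hat{\ell}'\hat{n}'+\hat{\ell}(2(\hat{n}')^2-\hat{n}\hat{n}'')\big)$, the same expression appearing in $\alpha_{\mathcal{E}_a}$. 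A short pseudo-orthonormality computation gives $\langle V,V\rangle=-(\hat{n}^2-\alpha^2)f(s)$, so under the section's standing assumptions $\alpha^2\neq\hat{n}^2$ and $f(s)\neq0$ the vector $V$ is nonzero; consequently $\mathcal{E}(\gamma_t)$ is singular at $s_0$ precisely when the scalar numerator vanishes there.

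With these two expressions in hand, each case is a direct substitution. For (i), setting $\alpha(s_0)=0$ collapses the evolute numerator to $\hat{n}(\hat{\ell}\hat{n}\gamma_t+\alpha'f_1)$ and the denominator to $|\hat{n}|\sqrt{|(\alpha')^2-\hat{\ell}^2\hat{n}^2|}$, giving the claimed point after absorbing $\operatorname{sgn}(\hat{n})$ into the sign; the scalar numerator reduces to $\hat{n}\big(2\hat{\ell}\alpha'\hat{n}'+\hat{n}(\alpha'\hat{\ell}'-\hat{\ell}\alpha'')\big)$, which (as $\hat{n}(s_0)\neq0$) vanishes exactly under the stated second-order condition. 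For (ii), setting $\hat{\ell}(s_0)=0$ reduces the numerator to $-(\alpha\hat{n}'-\hat{n}\alpha')f_1$ over $|\alpha\hat{n}'-\hat{n}\alpha'|$, yielding $\pm f_1(s_0)$; here $f(s_0)=(\alpha\hat{n}'-\hat{n}\alpha')^2>0$ confirms this is the PS-evolute, and the scalar numerator becomes $\hat{n}\hat{\ell}'(\hat{n}\alpha'-\alpha\hat{n}')$, so since $\alpha\hat{n}'-\hat{n}\alpha'\neq0$ the singularity criterion is $\hat{n}(s_0)\hat{\ell}'(s_0)=0$.

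Case (iii) then requires no new computation: $f_2$ singular means $\hat{\ell}(s_0)=\hat{n}(s_0)=0$, so the point formula from (ii) still gives $\pm f_1(s_0)$, and because $\hat{n}(s_0)=0$ the criterion $\hat{n}(s_0)\hat{\ell}'(s_0)=0$ is automatically satisfied, forcing $\mathcal{E}_p(\gamma_t)$ to be singular there. The only genuine labor is the derivative computation in the second paragraph, a lengthy but routine differentiation of the quotient \eqref{evolutetime}; everything afterward is bookkeeping of which terms survive under each vanishing condition. The one point I would verify carefully is that the radicand stays nonzero under each substitution, since that is precisely what keeps the evolute defined and guarantees $V$ does not degenerate.
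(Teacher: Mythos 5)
Your proposal is correct and takes essentially the same approach as the paper: the paper's proof is a one-line reference back to the spacelike case (Proposition \ref{spacepropevsing}) — write $\mathcal{E}'(\gamma_t)$ as a scalar function times the vector $V=(\alpha\hat{n}'-\hat{n}\alpha')(\hat{n}\gamma_t-\alpha f_2)-\hat{\ell}\hat{n}(\hat{n}^2-\alpha^2)f_1$ and substitute each vanishing condition — which is exactly what you do. Your explicit verification that $\langle V,V\rangle=-(\hat{n}^2-\alpha^2)f(s)\neq 0$, and your observation that $\hat{\ell}(s_0)=0$ forces $f(s_0)=(\alpha\hat{n}'-\hat{n}\alpha')^2>0$ so that only the PS-evolute appears in (ii) and (iii), make rigorous precisely the caveat the paper states (that the AdS-evolute is not defined where $\hat{\ell}(s_0)=0$).
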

    \begin{proof}
    The proof of this proposition is very similar to the proof of Proposition \ref{spacepropevsing}. But here just notice that the AdS-evolute of $\gamma_t$ is not well-defined at a point $s_0$ such that $\hat{\ell}(s_0)=0$. 
    \end{proof}


    
The following proposition follows from a messy but straightforward calculation.
\begin{proposition}
    Let $(\gamma_t,v_1,v_2)$ be a pseudo-spherical timelike framed immersion, and for a fixed real number $\phi$, let $(\gamma_t^\phi,v_1^\phi,v_2^\phi)$ be the parallel of $\gamma_t$. Then $\mathcal{E}(\gamma_t^\phi)(s)=\mathcal{E}(\gamma_t)(s)$. 
\end{proposition}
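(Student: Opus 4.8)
The plan is to show that the total evolute \eqref{evolutetime} is determined by the framed immersion only through the vector field $\mu$ and its first two derivatives, and then to observe that passing to the anti-de Sitter parallel leaves $\mu$ unchanged.

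First I would verify that $\mu^\phi(s)=\mu(s)$. Writing $w(s)=\cos\theta(s)v_1(s)+\sin\theta(s)v_2(s)$ and $w^\perp(s)=-\sin\theta(s)v_1(s)+\cos\theta(s)v_2(s)$ with $\theta'=-\ell$, the formulas for the parallel give $\gamma_t^\phi=\cosh\phi\,\gamma_t+\sinh\phi\,w$, $v_1^\phi=\sinh\phi\,\gamma_t+\cosh\phi\,w$, and $v_2^\phi=w^\perp$. Since $\gamma_t\times w\times w^\perp=\mu$ and the triple product is alternating and multilinear, a one-line expansion using $\cosh^2\phi-\sinh^2\phi=1$ yields $\mu^\phi=\gamma_t^\phi\times v_1^\phi\times v_2^\phi=\mu$, hence also $(\mu^\phi)'=\mu'$ and $(\mu^\phi)''=\mu''$.

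The heart of the argument is the claim that, for any pseudo-spherical timelike framed immersion, the total evolute is (up to the common sign) the unique unit vector of $\mathbb{R}^4_2$ pseudo-orthogonal to $\mu$, $\mu'$, and $\mu''$, normalized onto $AdS^3$ or $S^3_2$ according to the causal character of that orthogonal direction. The numerator of \eqref{evolutetime} lies in $\mathrm{span}\{\gamma_t,f_1,f_2\}=\mu^\perp$, so orthogonality to $\mu$ is automatic. Differentiating the frame formulas \eqref{TimelikeNSF} gives $\mu'=-\alpha\gamma_t+\hat{n}f_2$ and $\mu''=-\alpha'\gamma_t-\hat{\ell}\hat{n}f_1+\hat{n}'f_2+(\hat{n}^2-\alpha^2)\mu$; pairing the numerator against these (with $\langle\gamma_t,\gamma_t\rangle=-1$ and $\langle f_1,f_1\rangle=\langle f_2,f_2\rangle=1$) produces the cancellations $\alpha'\hat{\ell}\hat{n}^2-\hat{\ell}\hat{n}^2\alpha'=0$ and $\hat{\ell}\hat{n}\alpha\hat{n}'-\hat{\ell}\hat{n}\alpha\hat{n}'=0$, so that $\langle\mathcal{E}(\gamma_t),\mu'\rangle=\langle\mathcal{E}(\gamma_t),\mu''\rangle=0$. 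These are exactly the cancellations underlying the PS-height function description of the evolute. Moreover the quantity $f(s)$ under the radical equals, up to sign, the Gram determinant of $\{\mu,\mu',\mu''\}$, so it too depends on $\mu$ alone.

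Applying this characterization to both $\gamma_t$ and $\gamma_t^\phi$ and substituting $\mu^\phi=\mu$, $(\mu^\phi)'=\mu'$, $(\mu^\phi)''=\mu''$, the two evolutes satisfy identical orthogonality conditions with identical causal character and normalization, whence $\mathcal{E}(\gamma_t^\phi)(s)=\mathcal{E}(\gamma_t)(s)$ with the same sign. I expect the main obstacle to be the verification that the numerator of \eqref{evolutetime} is pseudo-orthogonal to $\mu''$ and that $f(s)$ is a function of $\{\mu,\mu',\mu''\}$; this is the only genuinely computational point, though it reduces to the elementary cancellations above. A direct alternative — substituting the parallel's data $\alpha^\phi,\hat{\ell}^\phi,\hat{n}^\phi,f_1^\phi,f_2^\phi$ into \eqref{evolutetime} and simplifying via the invariant $(\hat{n}^\phi)^2-(\alpha^\phi)^2=\hat{n}^2-\alpha^2$ — is the messy but straightforward route the statement alludes to.
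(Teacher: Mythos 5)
Your proposal is correct, but it takes a genuinely different route from the paper: the paper offers no written argument at all for this proposition, stating only that it ``follows from a messy but straightforward calculation,'' i.e.\ the direct substitution of $\alpha^\phi,\hat{\ell}^\phi,\hat{n}^\phi,f_1^\phi,f_2^\phi$ into \eqref{evolutetime} that you mention as the alternative. Your argument instead isolates an intrinsic characterization: since $f(s)=\langle N(s),N(s)\rangle\neq 0$ (where $N$ denotes the numerator of \eqref{evolutetime}), the vectors $\mu,\mu',\mu''$ are linearly independent, their pseudo-orthogonal complement in $\mathbb{R}^4_2$ is a non-degenerate line, and the evolute is exactly the pair of unit vectors on that line --- your cancellation computations $\langle N,\mu'\rangle=\langle N,\mu''\rangle=0$ check out against \eqref{TimelikeNSF}, and one can even verify $N=-\mu\times\mu'\times\mu''$ exactly, so the normalization and causal type ($AdS^3$ versus $S^3_2$) are also intrinsic to $\mu$. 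Combined with $\mu^\phi=\mu$ (which the paper itself records in the spacelike analogue and which your triple-product expansion gives, noting that the rotation \eqref{fsfortimelike} also preserves $\mu$), the conclusion is immediate. What this buys: it replaces an opaque computation by a structural reason, and it makes visible the link --- which the paper only establishes later, in the anti-de Sitter height-function section --- that the evolute is the secondary discriminant of $\mathbf{v}\mapsto\langle\mu(s),\mathbf{v}\rangle$ and hence can only depend on the $2$-jet of $\mu$; parallels, sharing $\mu$, must share evolutes. What the direct route buys is that it needs no uniqueness or linear-independence discussion. The only point you should make explicit is that uniqueness of the orthogonal direction requires linear independence of $\{\mu,\mu',\mu''\}$, which follows from the standing assumption $f(s)\neq 0$ because $N=0$ forces $f=0$; with that sentence added, your sketch is a complete proof.
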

\subsection{Focal surfaces of pseudo-spherical timelike framed immersions}
In this section we obtain evolutes as the set of singular values of focal surfaces of pseudo-spherical timelike framed immersions. We also give relationships between singularities of the evolute and of the focal surface.

Let $(\gamma_t, f_1,f_2)$ be a pseudo-spherical timelike framed immersion with the curvature $(\alpha, \hat{l},0,\hat{n})$ in $AdS^3$.  Define 
\[ \zeta(s)=\dfrac{\hat{n}(s)\gamma_t(s)-\alpha(s)f_2(s)}{\sqrt{\epsilon_\zeta (\alpha^2(s)-\hat{n}^2(s))} }, \]
where $\epsilon_\zeta=\langle \zeta,\zeta\rangle=\text{sgn}(\alpha^2-\hat{n}^2)$. We will consider two cases depending upon $\epsilon_\zeta$. 
\subsubsection*{Case 1.}
Let $\epsilon_\zeta=-1$, i.e., $\zeta$ is a timelike vector. We define the focal surface $\mathcal{F}_3(s,\theta):I\times \mathbb{R}\to AdS^3$ of $\gamma_t$ as
\begin{equation}\label{timefscase1}
    \mathcal{F}_3(s,\theta)=\cosh\theta\,\zeta(s)+\sinh\theta f_1(s).
\end{equation}
Similar to the case with evolutes, it is easy to show that this focal surface of a pseudo-spherical timelike immersion is independent of the choice of parametrization. Now we find the partial derivatives of $\mathcal{F}_3(s,\theta)$. Differentiating \eqref{timefscase1} with respect to $s$ and using \eqref{TimelikeNSF} yields
\begin{align}
    \dfrac{\partial}{\partial s}\mathcal{F}_3(s,\theta)=&\dfrac{\cosh\theta}{\hat{n}^2(s)-\alpha^2(s)} \bigg( \dfrac{- (\alpha(s)\hat{n}'(s)-\hat{n}(s)\alpha'(s))}{\sqrt{\hat{n}^2(s)-\alpha^2(s)}}(\alpha(s)\gamma(s)-\hat{n}(s)f_2(s)) \nonumber \\ & +\alpha(s)\hat{\ell}(s)\sqrt{\hat{n}^2(s)-\alpha^2(s)}f_1(s)  \bigg)  +\sinh\theta\,\hat{\ell}(s) f_2(s). \label{timefscase1-2}
\end{align}
The derivative of \eqref{timefscase1} with respect to $\theta$ is easily obtained as
\begin{equation}
    \label{timefscase1-3} 
    \dfrac{\partial}{\partial\theta}\mathcal{F}_3(s,\theta)=\sinh\theta\zeta(s)+\cosh\theta f_1(s).
\end{equation}
The focal surface $\mathcal{F}_3$ of $\gamma_t$ has singularity at $(s_0,\theta_0)$ if and only if $\mathcal{F}_3\times \frac{\partial}{\partial s}\mathcal{F}_3\times \frac{\partial}{\partial \theta}\mathcal{F}_3(s_0,\theta_0)=0$. Then from \eqref{timefscase1}, \eqref{timefscase1-2}, and \eqref{timefscase1-3} 
\[ -\dfrac{1}{\sqrt{\hat{n}^2(s)-\alpha^2(s)}}\left(\dfrac{\cosh\theta(\alpha(s)\hat{n}'(s)-\hat{n}(s)\alpha'(s))}{\sqrt{\hat{n}^2(s)-\alpha^2(s)}}+\sinh\theta \hat{\ell}(s)\hat{n}(s)\right)\mu(s), \]
which vanishes at $(s_0,\theta_0)$ if and only if 
\begin{equation}\label{timefscase1-4}
    \cosh\theta_0(\alpha(s_0)\hat{n}'(s_0)-\hat{n}(s_0)\alpha'(s_0))+\sinh\theta_0 \hat{\ell}(s_0)\hat{n}(s_0)\sqrt{\hat{n}^2(s_0)-\alpha^2(s_0)}=0.
\end{equation}
Introducing this into \eqref{timefscase1}, we find that
\begin{align*}
    \mathcal{F}_3(s_0,\theta_0)&=\pm \dfrac{\hat{\ell}(s_0)\hat{n}(s_0)\,(\hat{n}(s_0)\gamma_t(s_0)-\alpha(s_0)f_2(s_0))-(\alpha(s_0)\hat{n}'(s_0)-\hat{n}(s_0)\alpha'(s_0))f_1(s_0)}{\sqrt{\hat{\ell}^2(s_0)\hat{n}^2(s_0)(\hat{n}^2(s_0)-\alpha^2(s_0))-(\alpha(s_0)\hat{n}'(s_0)-\hat{n}(s_0)\alpha'(s_0))^2}} \\
    &=\mathcal{E}_a(\gamma_t)(s_0).
\end{align*}
Therefore, the set of singular points of $\mathcal{F}_3(s,\theta)$ coincides with  the AdS-evolute $\mathcal{E}_a(\gamma_t)(s)$.
\begin{theorem}\label{timefocalsing}Suppose that the focal surface $\mathcal{F}_3(s,\theta)$ has singularity at $(s_0,\theta_0)$. Then
    \begin{enumerate}
        \item[{\normalfont (i)}] $\mathcal{F}_3(s,\theta)$ is locally diffeomorphic to the cuspidal edge at $(s_0,\theta_0)$ if and only if $\alpha_{\mathcal{E}_a}(s_0)\neq 0$, i.e., the AdS-evolute $\mathcal{E}_a(\gamma_t)(s)$ is regular at $s_0$.
        \item[{\normalfont (ii)}] $\mathcal{F}_3(s,\theta)$ is locally diffeomorphic to the swallowtail at $(s_0,\theta_0)$ if and only if $\alpha_{\mathcal{E}_a}(s_0)= 0$ and $\alpha'_{\mathcal{E}_a}(s_0)\neq 0$. 
    \end{enumerate}
\end{theorem}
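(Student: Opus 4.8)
The plan is to follow the proof of Theorem~\ref{spfocalsin} in structure, replacing the circular functions by their hyperbolic counterparts and invoking the cuspidal-edge and swallowtail criteria from \cite{Izumiya-Circular, kokubu}. First I would rewrite the singularity condition \eqref{timefscase1-4} as $\tanh\theta_0 = \omega(s_0)$, where $\omega$ is exactly the function introduced in Remark~\ref{remalphaevol}; since $f(s)<0$ on the AdS-evolute locus forces $\omega^2<1$, this identifies the singular parameter $\theta$ locally as the smooth function $\theta(s)=\arctanh(\omega(s))$ along the singular set.

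Next I would introduce the signed density function
\[
\lambda(s,\theta) = \det\!\left(\mathcal{F}_3,\, \tfrac{\partial}{\partial s}\mathcal{F}_3,\, \tfrac{\partial}{\partial\theta}\mathcal{F}_3,\, \mu\right),
\]
which by the computation preceding the theorem is the scalar coefficient of $\mu$ in $\mathcal{F}_3\times\frac{\partial}{\partial s}\mathcal{F}_3\times\frac{\partial}{\partial\theta}\mathcal{F}_3$. Differentiating with respect to $\theta$ and using $(\alpha\hat{n}'-\hat{n}\alpha',\,\hat{\ell}\hat{n})\neq(0,0)$, I would verify $\partial\lambda/\partial\theta\neq 0$ on the singular set $\lambda^{-1}(0)$, so that every singular point is non-degenerate and $\lambda^{-1}(0)$ is the image of a regular curve $c(s)=(s,\theta(s))$.

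I would then take the null vector field $\xi(s)$ spanning $\ker d\mathcal{F}_3$ along $c$, namely the hyperbolic analog of the field $(1,-\alpha\hat{\ell}/\sqrt{\hat{n}^2-\alpha^2})$ used in Theorem~\ref{spfocalsin}, and compute $\det(c'(s),\xi(s))$. Using $\theta'(s)=\omega'(s)/(1-\omega^2(s))$ together with the compact expression for $\alpha_{\mathcal{E}_a}$ in Remark~\ref{remalphaevol}, this determinant collapses (up to sign) to $\alpha_{\mathcal{E}_a}(s_0)$. Both assertions then follow at once: by \cite[Theorem 6.1(A)]{Izumiya-Circular}, $\mathcal{F}_3$ is locally a cuspidal edge iff $\det(c',\xi)(s_0)=\alpha_{\mathcal{E}_a}(s_0)\neq 0$; by \cite[Theorem 6.1(B)]{Izumiya-Circular}, it is a swallowtail iff $\det(c',\xi)(s_0)=\alpha_{\mathcal{E}_a}(s_0)=0$ and $(d/ds)\det(c',\xi)(s)\big|_{s_0}=\alpha'_{\mathcal{E}_a}(s_0)\neq 0$.

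The main obstacle is precisely the final identification of $\det(c'(s_0),\xi(s_0))$ with $\alpha_{\mathcal{E}_a}(s_0)$. The point of Remark~\ref{remalphaevol} rewriting $\alpha_{\mathcal{E}_a}$ in terms of $\omega$ and $\omega'/(1-\omega^2)$ is exactly to match the derivative $\arctanh'(\omega)=\omega'/(1-\omega^2)$ coming from $\theta'(s)$; the delicate steps are pinning down the correct null direction $\xi$ in the hyperbolic (timelike) setting and keeping sign conventions consistent so that the $\alpha\hat{\ell}/\sqrt{\hat{n}^2-\alpha^2}$ term and the $\theta'$ term assemble into $\alpha_{\mathcal{E}_a}$. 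Once this identity is secured, the quoted criteria do the rest.
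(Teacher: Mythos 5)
Your proposal matches the paper's own proof essentially step for step: the same signed density function $\lambda=\det\bigl(\mathcal{F}_3,\partial_s\mathcal{F}_3,\partial_\theta\mathcal{F}_3,\mu\bigr)$, the same non-degeneracy check via $\partial\lambda/\partial\theta\neq 0$, the same parametrization $\theta(s)=\arctanh(\omega(s))$ of the singular set, the same null field $\xi$, and the same collapse of $\det(c',\xi)$ to $\alpha_{\mathcal{E}_a}(s_0)$ using Remark~\ref{remalphaevol} before invoking \cite[Theorem 6.1(A),(B)]{Izumiya-Circular}. You even correctly resolved the paper's small internal inconsistency by taking $\xi$ with the $\hat{\ell}$-term (as the computation requires) rather than the $\hat{n}$-term stated in the paper's text.
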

\begin{proof}
If $\mathcal{F}_3(s,\theta)$ has singularity at $(s_0,\theta_0)$, then from \eqref{timefscase1-4} we have
\[ \tanh\theta_0=\dfrac{-\alpha(s_0)\hat{n}'(s_0)+\hat{n}(s_0)\alpha'(s_0)}{\hat{\ell}(s_0)\hat{n}(s_0)\sqrt{-\alpha^2(s_0)+\hat{n}^2(s_0)}}.  \]
We shall prove this theorem by using the well-known criteria for the cuspidal edge and the swallowtail (See \cite{Izumiya-Circular,kokubu} for details).  Now consider the signed density function 
\[ \lambda(s,\theta)=\det\left(\mathcal{F}_3, \dfrac{\partial}{\partial s}\mathcal{F}_3,  \dfrac{\partial}{\partial \theta}\mathcal{F}_3, \mu\right)=\dfrac{\cosh\theta(\alpha(s)\hat{n}'(s)-\hat{n}(s)\alpha'(s))}{\sqrt{-(\alpha^2(s)-\hat{n}^2(s))}}+\sinh\theta \hat{\ell}(s)\hat{n}(s). \]
Set $\lambda^{-1}(0)=\mathcal{S}(\mathcal{F}_3(\gamma_t))$. We see that $\mathcal{S}(\mathcal{F}_3(\gamma_t))=\{(s,\theta(s))\}$, where $\theta(s)$ is a function satisfying $\lambda(s,\theta(s))=0$. Then we have 
\[ \dfrac{\partial}{\partial\theta}\lambda(s,\theta)=\sinh\theta\dfrac{(\alpha(s)\hat{n}'(s)-\hat{n}(s)\alpha'(s))}{\sqrt{-(\alpha^2(s)-\hat{n}^2(s))}}+\cosh\theta \hat{\ell}(s)\hat{n}(s)\neq 0, \]
since $(\alpha(s)\hat{n}'(s)-\hat{n}(s)\alpha'(s),\hat{\ell}(s)\hat{n}(s))\neq (0,0)$. Therefore any $p\in \mathcal{S}(\mathcal{F}_3(\gamma_t))$ is non-degenerate. Let $p$ be a non-degenerate singular point. Then there exists a regular curve $c:I\to I\times\mathbb{R}\subset \mathbb{R}^2$ such that $c(s_0)=p$ and $\text{image}(c)=\mathcal{S}(\mathcal{F}_3(\gamma_t))$ near $p$. Let $c(s)=(s,\theta(s))$. Consider the null vector field $\xi:I\to\mathbb{R}^2$ along $c(s)$ given by $\xi(s)=(1,-\alpha(s)\hat{n}(s)/\sqrt{\hat{n}^2(s)-\alpha^2(s)})$. Then from \eqref{timefscase1-4} and Remark \ref{remalphaevol}
\begin{align*}
\det(c'(s_0), \xi(s_0))&=-\dfrac{\alpha(s_0)\hat{\ell}(s_0)}{\sqrt{\hat{n}^2(s_0)-\alpha^2(s_0)}}-\theta'(s_0) \\
&=-\dfrac{\alpha(s_0)\hat{\ell}(s_0)}{\sqrt{\hat{n}^2(s_0)-\alpha^2(s_0)}}
    -\dfrac{d}{ds}\arctanh\left(\dfrac{-\alpha(s)\hat{n}'(s)+\hat{n}(s)\alpha'(s)}{\hat{\ell}(s)\hat{n}(s)\sqrt{-\alpha^2(s)+\hat{n}^2(s)}}\right)\bigg|_{s=s_0}\\
    &=\alpha_{\mathcal{E}_a}(s_0).
\end{align*}
Thus from \cite[Theorem 6.1(A)]{Izumiya-Circular}, $\mathcal{F}_3(\gamma_t)$ is locally diffeomorphic to the cuspidal edge at $(s_0,\theta_0)$ if and only if $\alpha_{\mathcal{E}_a}(s_0)\neq 0$. 

From \cite[Theorem 6.1(B)]{Izumiya-Circular},  $\mathcal{F}_3(\gamma_t)$ is locally diffeomorphic to the swallowtail at $(s_0,\theta_0)$ if and only if $\det(c'(s_0), \xi(s_0))=\alpha_{\mathcal{E}_a}(s_0)=0$ and   $(d/ds)\det(c'(s), \xi(s))\big|_{s=s_0}=\alpha'_{\mathcal{E}_a}(s_0)\neq 0$.
\end{proof}
\begin{remark}
    For this case, it is also possible to construct another focal surface defined by
    \[ \mathcal{F}_4(s,\theta)=\sinh\theta\zeta(s)+\cosh\theta f_1(s). \]
    Notice that $\mathcal{F}_4(s,\theta)\in S^3_2$. The computations will be quite similar to those given above. A theorem similar to Theorem \ref{timefocalsing} can be easily obtained. 
\end{remark}
\subsubsection*{Case 2.}
Let $\epsilon_\zeta=1$, i.e., $\zeta$ is a spacelike vector. We define the focal surface $\mathcal{F}_5(s,\theta):I\times [0,2\pi)\to S^3_2$ of $\gamma_t$ as
\begin{equation*}\label{timefscase2}
    \mathcal{F}_5(s,\theta)=\cos\theta\,\zeta(s)+\sin\theta f_1(s).
\end{equation*}
To avoid repeating the same process, we leave analyzing this focal surface as an exercise to the reader. It is not surprising that the set of singular values of $\mathcal{F}_5$ coincides with the evolute $\mathcal{E}_p(\gamma_t)$, and a theorem similar to Theorem \ref{timefocalsing} is also satisfied for $\mathcal{F}_5$. 
\subsection{Anti-de Sitter height functions}
Let $(\gamma_t,f_1,f_2):I\to AdS^3\times \Delta_5$ be a pseudo-spherical timelike framed immersion. In this section we see that it is possible to explain the evolute of $\gamma_t$ as a wavefront from the viewpoint of Legendrian singularity theory as follows. We define two families of functions $H^T:I\times AdS^3\to\mathbb{R}$ by $H^T(s,\mathbf{v})=\langle \mu(s),\mathbf{v}\rangle$ called the \textit{AdS-timelike height function}, and $H^S:I\times S^3_2\to\mathbb{R}$ by $H^S(s,\mathbf{v})=\langle \mu(s),\mathbf{v}\rangle$ called the \textit{AdS-spacelike height function} on $(\gamma_t,f_1,f_2)$. The following proposition follows from a straightforward calculation.
\begin{proposition}
   Let $(\gamma_t,f_1,f_2):I\to AdS^3\times \Delta_5$ be a pseudo-spherical timelike framed immersion with $f(s)=(\alpha(s)\hat{n}'(s)-\hat{n}(s)\alpha'(s))^2-\hat{\ell}^2(s)\hat{n}^2(s)(\hat{n}^2(s)-\alpha^2(s))\neq 0$.
   \begin{enumerate}
       \item[{\normalfont (i)}] Suppose that $f(s)<0$ and $\hat{n}^2(s_0)>\alpha^2(s_0)$.
       \begin{enumerate}
           \item[{\normalfont (a)}] $H^T(s_0,\mathbf{v}_0)=0$ if and only if there exist real numbers $a,b,$ and $c$ such that $\mathbf{v}_0=a\gamma_t(s_0)+bf_1(s_0)+cf_2(s_0)$ with $-a^2+b^2+c^2=-1$.
           \item[{\normalfont (b)}] $H^T(s_0,\mathbf{v}_0)=(\partial H^T/\partial s)(s_0,\mathbf{v}_0)=0$ if and only if there exist real numbers $a$ and $b$ such that $\mathbf{v}_0=a\gamma_t(s_0)+bf_1(s_0)+(-a \alpha(s_0)/\hat{n}(s_0))f_2(s_0)$ with $\hat{n}^2(s_0)(1+b^2)=a^2(\hat{n}^2(s_0)-\alpha^2(s_0))$.
           \item[{\normalfont (c)}] $H^T(s_0,\mathbf{v}_0)=(\partial H^T/\partial s)(s_0,\mathbf{v}_0)=(\partial^2 H^T/\partial s^2)(s_0,\mathbf{v}_0)=0$ if and only if $\mathbf{v}_0=\mathcal{E}_a(\gamma_t)(s_0)$.
       \end{enumerate}
       \item[{\normalfont (ii)}] Suppose that $f(s)>0$.
       \begin{enumerate}
           \item[{\normalfont (a)}] $H^S(s_0,\mathbf{v}_0)=0$ if and only if there exist real numbers $a,b,$ and $c$ such that $\mathbf{v}_0=a\gamma_t(s_0)+bf_1(s_0)+cf_2(s_0)$ with $-a^2+b^2+c^2=1$.
           \item[{\normalfont (b)}] $H^S(s_0,\mathbf{v}_0)=(\partial H^S/\partial s)(s_0,\mathbf{v}_0)=0$ if and only if there exist real numbers $a$ and $b$ such that $\mathbf{v}_0=a\gamma_t(s_0)+bf_1(s_0)+(-a \alpha(s_0)/\hat{n}(s_0))f_2(s_0)$ with $\hat{n}^2(s_0)(b^2-1)=a^2(\hat{n}^2(s_0)-\alpha^2(s_0))$, where $b^2-1$ and $\hat{n}^2(s_0)-\alpha^2(s_0)$ have the same sign.
           \item[{\normalfont (c)}] $H^S(s_0,\mathbf{v}_0)=(\partial H^S/\partial s)(s_0,\mathbf{v}_0)=(\partial^2 H^S/\partial s^2)(s_0,\mathbf{v}_0)=0$ if and only if $\mathbf{v}_0=\mathcal{E}_p(\gamma_t)(s_0)$.
       \end{enumerate}
   \end{enumerate}
\end{proposition}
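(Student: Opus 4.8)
The plan is to work at a fixed parameter and expand everything in the pseudo-orthonormal frame. First I would note that $\{\gamma_t(s_0),f_1(s_0),f_2(s_0),\mu(s_0)\}$ is a basis of $\mathbb{R}^4_2$ with $\langle\gamma_t,\gamma_t\rangle=\langle\mu,\mu\rangle=-1$ and $\langle f_1,f_1\rangle=\langle f_2,f_2\rangle=1$ (recall $\mu\in AdS^3$ from Theorem~\ref{evolaframed}), and expand an arbitrary $\mathbf{v}_0=a\gamma_t(s_0)+bf_1(s_0)+cf_2(s_0)+d\mu(s_0)$. Pairing with the frame recovers $a=-\langle\mathbf{v}_0,\gamma_t\rangle$, $b=\langle\mathbf{v}_0,f_1\rangle$, $c=\langle\mathbf{v}_0,f_2\rangle$, $d=-\langle\mathbf{v}_0,\mu\rangle$, so that, writing $H$ for either $H^T$ or $H^S$ (which share the formula $\langle\mu,\mathbf{v}\rangle$), we get $H(s_0,\mathbf{v}_0)=-d$. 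Moreover $\langle\mathbf{v}_0,\mathbf{v}_0\rangle=-a^2+b^2+c^2-d^2$, which equals $-1$ when $\mathbf{v}_0\in AdS^3$ and $+1$ when $\mathbf{v}_0\in S^3_2$. Part (a) in each case is then immediate: $H=0$ is equivalent to $d=0$, and the membership relation collapses to $-a^2+b^2+c^2=\mp1$.

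Next I would differentiate $\mu$ using \eqref{TimelikeNSF}. This gives $\mu'=-\alpha\gamma_t+\hat{n}f_2$ and, after one more differentiation with $\gamma_t'=\alpha\mu$, $f_1'=\hat{\ell}f_2$, $f_2'=-\hat{\ell}f_1+\hat{n}\mu$,
\begin{equation*}
\mu''=-\alpha'\gamma_t-\hat{n}\hat{\ell}f_1+\hat{n}'f_2+(\hat{n}^2-\alpha^2)\mu.
\end{equation*}
Pairing these with $\mathbf{v}_0$ and inserting the coefficient formulas converts the derivative conditions into linear equations: $(\partial H/\partial s)(s_0,\mathbf{v}_0)=\langle\mu',\mathbf{v}_0\rangle=a\alpha+c\hat{n}$, and, once $d=0$, $(\partial^2 H/\partial s^2)(s_0,\mathbf{v}_0)=\langle\mu'',\mathbf{v}_0\rangle=a\alpha'-b\hat{n}\hat{\ell}+c\hat{n}'$. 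For part (b) I would adjoin $a\alpha+c\hat{n}=0$; since $\hat{n}=\sqrt{n^2+m^2}\neq0$ for a timelike framed immersion, this yields $c=-a\alpha/\hat{n}$, and substituting into $-a^2+b^2+c^2=\mp1$ and clearing $\hat{n}^2$ produces exactly $\hat{n}^2(1+b^2)=a^2(\hat{n}^2-\alpha^2)$ and $\hat{n}^2(b^2-1)=a^2(\hat{n}^2-\alpha^2)$, respectively; the sign clause in (ii)(b) is simply the requirement that the two sides carry the same sign, forced by $a^2\ge0$.

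For part (c) I would adjoin the third condition $a\alpha'-b\hat{n}\hat{\ell}+c\hat{n}'=0$. Writing $W=\alpha\hat{n}'-\hat{n}\alpha'$ and setting $N:=\hat{\ell}\hat{n}(\hat{n}\gamma_t-\alpha f_2)-Wf_1$, which is precisely the numerator of \eqref{evolutetime}, I would check directly that the coefficient vector $(a,b,c,d)=(\hat{\ell}\hat{n}^2,-W,-\hat{\ell}\hat{n}\alpha,0)$ of $N$ solves all three homogeneous linear conditions, so that these conditions single out the line spanned by $N$ and force $\mathbf{v}_0=tN$. The one computation I cannot avoid is the pseudo-norm $\langle N,N\rangle=-\hat{\ell}^2\hat{n}^4+W^2+\hat{\ell}^2\hat{n}^2\alpha^2=W^2-\hat{\ell}^2\hat{n}^2(\hat{n}^2-\alpha^2)=f(s)$; imposing $\langle\mathbf{v}_0,\mathbf{v}_0\rangle=\mp1$ then fixes $t=\pm1/\sqrt{|f(s)|}$, i.e. $\mathbf{v}_0=\pm N/\sqrt{|f(s)|}$.

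The main obstacle is really only bookkeeping: confirming that the sign of $f(s)$ matches the causal character of $\mathbf{v}_0$, and hence the correct height function. When $f(s)<0$ the vector $N$ is timelike, its normalization lies in $AdS^3$, and one recovers $\mathbf{v}_0=\mathcal{E}_a(\gamma_t)(s_0)$ for $H^T$; when $f(s)>0$ it is spacelike, lands in $S^3_2$, and gives $\mathbf{v}_0=\mathcal{E}_p(\gamma_t)(s_0)$ for $H^S$. Every other implication is forced by the pseudo-orthonormal expansion, so no further analytic input is needed.
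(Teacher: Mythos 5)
Your proof is correct, and it is exactly the ``straightforward calculation'' that the paper invokes but never writes out: expand $\mathbf{v}_0$ in the pseudo-orthonormal frame $\{\gamma_t,f_1,f_2,\mu\}$ (with $\langle\gamma_t,\gamma_t\rangle=\langle\mu,\mu\rangle=-1$, $\langle f_i,f_i\rangle=1$), compute $\mu'$ and $\mu''$ from \eqref{TimelikeNSF}, turn the vanishing conditions into linear equations in $(a,b,c,d)$, and identify the resulting line with the numerator $N$ of \eqref{evolutetime}, whose pseudo-norm equals $f(s)$. The single step you leave implicit --- that the three homogeneous conditions $d=0$, $a\alpha+c\hat{n}=0$, $a\alpha'-b\hat{n}\hat{\ell}+c\hat{n}'=0$ have rank $3$, so their solution set is \emph{exactly} the line through $N$ and not something larger --- is immediate: $\hat{n}\neq 0$ and $(\alpha\hat{n}'-\hat{n}\alpha',\hat{\ell})\neq(0,0)$ (the latter forced by $f(s_0)\neq 0$) make the corresponding coefficient rows linearly independent, so this is a harmless omission rather than a gap.
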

Notice that for both height functions $H^T$ and $H^S$ defined above, the discriminant sets $\mathcal{D}_{H^T}$ and $\mathcal{D}_{H^S}$ coincide with the images of the focal surfaces defined in the previous section. Moreover the secondary discriminant sets  $\mathcal{D}^2_{H^T}$ and $\mathcal{D}^2_{H^S}$ coincide with the images of the evolutes $\mathcal{E}_a(\gamma_t)$ and $\mathcal{E}_p(\gamma_t)$, respectively.
\begin{example}
    Consider the smooth curve $\gamma_t:I\to AdS^3$ defined by
\[ \gamma_t(s)=\left(\sqrt{2}\cosh(\frac{s}{\sqrt{2}}), \cosh(\sqrt{2}s)+\sqrt{2}\sinh{(\sqrt{2}s)}, \sqrt{2}\sinh{(\frac{s}{\sqrt{2}})}, \sqrt{2}\cosh{(\sqrt{2}s)}+\sinh(\sqrt{2}s)  \right). \]
The derivative of this curve with respect to $s$ is
\[ \gamma_t'(s)=\left( \sinh{(\frac{s}{\sqrt{2}})},2\cosh(\sqrt{2}s)+\sqrt{2}\sinh{(\sqrt{2}s)}, \cosh{(\frac{s}{\sqrt{2}})}, \sqrt{2}\cosh{(\sqrt{2}s)}+2\sinh(\sqrt{2}s) \right). \]
We see that $\langle \gamma_t',\gamma_t'\rangle=-1$, that is, the curve $\gamma_t$ is a regular timelike curve in $AdS^3$. We define $v_1:I\to S^3_2$ and $v_2:I\to S^3_2$ by
\begin{align*}
    v_1(s)&=\left(\cosh(\frac{s}{\sqrt{2}}), \sqrt{2}\cosh(\sqrt{2}s)+2\sinh{(\sqrt{2}s)}, \sinh(\frac{s}{\sqrt{2}}), \sqrt{2}\sinh(\sqrt{2}s)+2\cosh{(\sqrt{2}s)}\right),\\
    v_2(s)&=\bigg(-\sqrt{2}\sinh(\frac{s}{\sqrt{2}}), -(\sqrt{2}\cosh(\sqrt{2}s)+\sinh{(\sqrt{2}s)}), -\sqrt{2}\cosh(\frac{s}{\sqrt{2}}), \\
    &\qquad -(\sqrt{2}\sinh(\sqrt{2}s)+\cosh{(\sqrt{2}s)}) \bigg).
\end{align*}
So $\langle v_1, \gamma_s\rangle=0$, $\langle v_2, \gamma_s\rangle=0$, $\langle v_1, \gamma_s'\rangle=0$, and $\langle v_2, \gamma'\rangle=0$. Thus $(\gamma_t,v_1,v_2):I\to AdS^3\times \Delta_5$ is a pseudo-spherical timelike framed curve in $AdS^3$. We then find that
\[ \mu(s)=\left( \sinh{(\frac{s}{\sqrt{2}})},2\cosh(\sqrt{2}s)+\sqrt{2}\sinh{(\sqrt{2}s)}, \cosh{(\frac{s}{\sqrt{2}})}, \sqrt{2}\cosh{(\sqrt{2}s)}+2\sinh(\sqrt{2}s) \right). \]
The curvature of $\gamma_t$ is given by $(\alpha,\ell,m,n)$, where
\begin{equation*}
    \alpha(s)=1, \quad \ell(s)=1, \quad m(s)=3/\sqrt{2},\quad n(s)=0.
\end{equation*}
Thus $(\gamma_t,v_1,v_2)$ is a pseudo-spherical timelike framed immersion. 
Using \eqref{fsfortimelike}, we easily obtain
\[ f_1(s)=-v_2(s),\quad f_2(s)=v_1(s),\quad \hat{\ell}(s)=\ell(s),\quad \hat{n}(s)=m(s). \]
Therefore from \eqref{evolutetime} 
\begin{align*}
    \mathcal{E}_a(\gamma_t)(s)=&\bigg( \dfrac{2\sqrt{2}}{\sqrt{7}}\cosh(\frac{s}{\sqrt{2}}), \dfrac{\cosh(\sqrt{2}s)+\sqrt{2}\sinh(\sqrt{2}s)}{\sqrt{7}}, \dfrac{2\sqrt{2}}{\sqrt{7}}\sinh(\frac{s}{\sqrt{2}}), \\
    &\quad \dfrac{\sinh(\sqrt{2}s)+\sqrt{2}\cosh(\sqrt{2}s)}{\sqrt{7}} \bigg).
\end{align*}
By using the hyperbolic Hopf map \eqref{hopfmap}, we are able to visualize the projections of $\gamma_t$ and $\mathcal{E}_a(\gamma_t)$ on the hyperbolic space $H^2(1/2)$. See Figure \ref{fig3}.
\begin{figure}[H]
		\centering
		\includegraphics[width=0.6\textwidth]{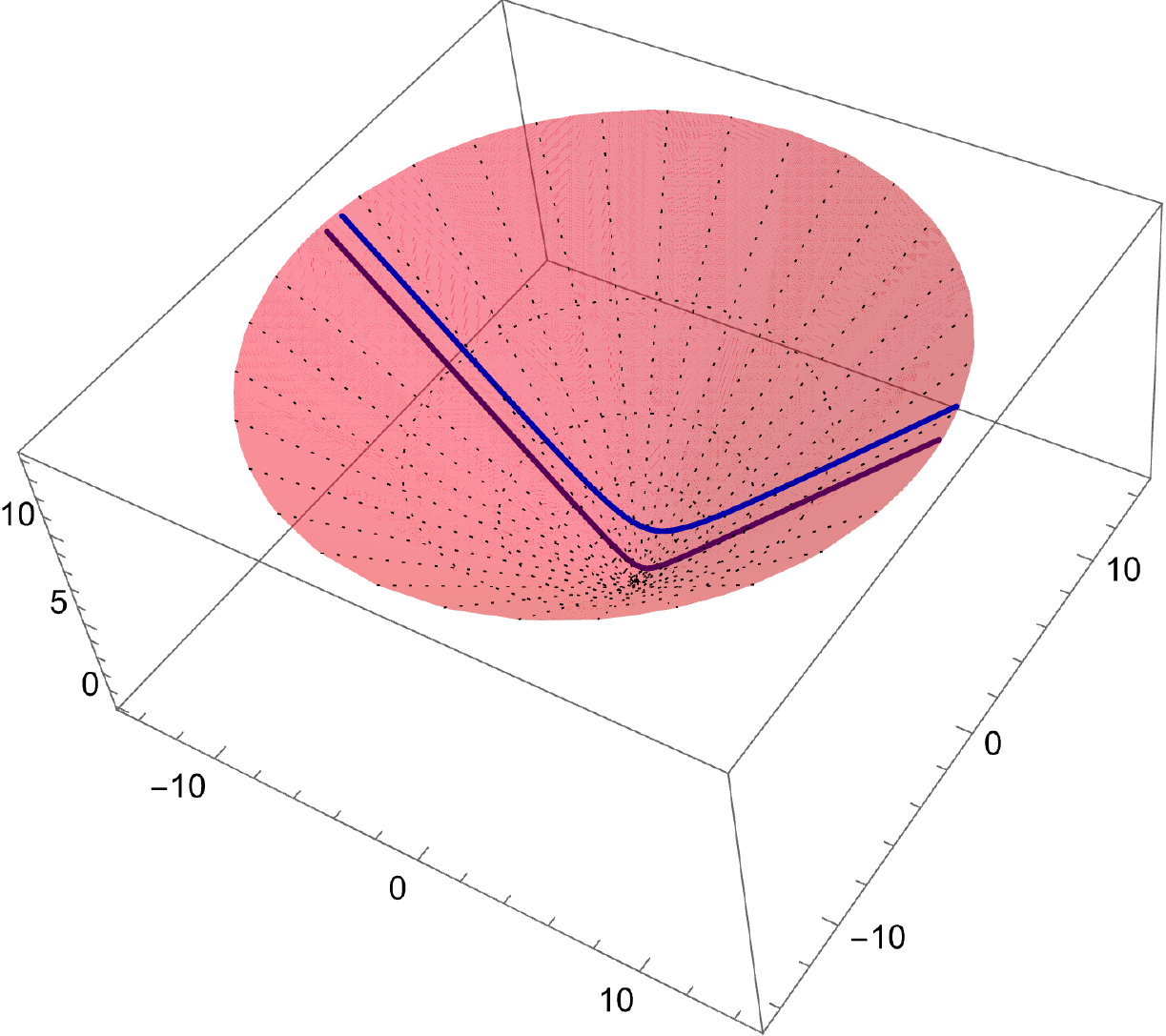}
		\caption{The projection of $\gamma_t$ (blue) and $\mathcal{E}_a(\gamma_t)$ (purple) on $H^2(1/2)$ by the hyperbolic Hopf map} \label{fig3}
	\end{figure}
\end{example}

\section*{Statements and Declarations}
\subsection*{Conflict of Interests} The author declares no known conflict of interest.

\end{document}